\theoremstyle{plain}
 \newtheorem{theorem}{Theorem}[section]
 \newtheorem{main-theorem}{Theorem}
 \newtheorem{corollary*}[main-theorem]{Corollary}
 \newtheorem{lemma}[theorem]{Lemma}
 \newtheorem{proposition}[theorem]{Proposition}
\theoremstyle{definition}
 \newtheorem{remark}[theorem]{Remark}
 \newtheorem{example}[theorem]{Example}
\newtheorem{definition}[theorem]{Definition}
\renewcommand{\mod}{\operatorname{mod}}
\newcommand{\rad}{\operatorname{rad}}
\newcommand{\soc}{\operatorname{soc}}
\newcommand{\Hom}{\operatorname{Hom}}
\newcommand{\Ker}{\operatorname{Ker}}
\newcommand{\T}{\operatorname{T}}
\newcommand{\op}{\operatorname{op}}
\newcommand{\charact}{\operatorname{char}}
\newcommand{\intt}{\operatorname{int}}
\newcommand{\vf}{\varphi}
\newcommand{\ve}{\varepsilon}
\newcommand{\bA}{\mathbb{A}}
\newcommand{\bN}{\mathbb{N}}
\newcommand{\bR}{\mathbb{R}}
\newcommand{\bS}{\mathbb{S}}
\newcommand{\bT}{\mathbb{T}}
\newcommand{\cO}{\mathcal{O}}
\newcommand{\cT}{\mathcal{T}}
\newcommand{\ba}{\bar{\alpha}}
\newcommand{\overbar}[1]{\mkern 5mu\overline{\mkern-5mu#1\mkern-5mu}\mkern 5mu}
\newcommand{\tikzAngleOfLine}{\tikz@AngleOfLine}
\def\tikz@AngleOfLine(#1)(#2)#3{%
\pgfmathanglebetweenpoints{%
\pgfpointanchor{#1}{center}}{%
\pgfpointanchor{#2}{center}}
\pgfmathsetmacro{#3}{\pgfmathresult}%
}
\begin{document}

\title{From Brauer graph algebras to biserial weighted surface algebras}

{\def\thefootnote{}
\footnote{The authors gratefully acknowledge support from the research grant
DEC-2011/02/A/ST1/00216 of the National Science Center Poland.}
}

\author[K. Edrmann]{Karin Erdmann}
\address[Karin Erdmann]{Mathematical Institute,
   Oxford University,
   ROQ, Oxford OX2 6GG,
   United Kingdom}
\email{erdmann@maths.ox.ac.uk}

\author[A. Skowro\'nski]{Andrzej Skowro\'nski}
\address[Andrzej Skowro\'nski]{Faculty of Mathematics and Computer Science,
   Nicolaus Copernicus University,
   Chopina~12/18,
   87-100 Toru\'n,
   Poland}
\email{skowron@mat.uni.torun.pl}

\begin{abstract}
We prove that the  class of Brauer graph
algebras coincides with the class of indecomposable
idempotent algebras of  biserial weighted surface
algebras. These algebras are associated to  
triangulated surfaces with arbitrarily oriented
triangles, investigated recently in \cite{ESk3} and \cite{ESk4}.
Moreover, we prove that Brauer graph algebras
are idempotent algebras of periodic weighted
surface algebras, investigated in \cite{ESk3} and \cite{ESk5}.

\bigskip

\noindent
\textit{Keywords:}
Brauer graph algebra,
Weighted surface algebra,
Biserial weighted surface algebra,
Symmetric algebra, 
Special biserial algebra,
Tame algebra,
Periodic algebra,
Quiver combinatorics
 
\noindent
\textit{2010 MSC:}
05E99, 16G20, 16G70, 20C20

\subjclass[2010]{05E99, 16G20, 16G70, 20C20}
\end{abstract}

\maketitle


\section{Introduction and the main results}\label{sec:intro}

Throughout this paper, $K$ will denote a fixed algebraically closed field.
By an algebra we mean an associative, finite-dimensional $K$-algebra
with an identity.
For an algebra $A$, we denote by $\mod A$ the category of
finite-dimensional right $A$-modules
and by $D$ the standard duality $\Hom_K(-,K)$ on $\mod A$.
An algebra $A$ is called \emph{self-injective}
if $A_A$ is an injective module, or equivalently,
the projective modules in $\mod A$ are injective.
Two self-injective algebras $A$ and $B$ are said
to be \emph{socle equivalent} if the quotient algebras
$A / \soc (A)$ and $B / \soc (B)$ are isomorphic.
Symmetric algebras are an important class of  self-injective algebras. 
An algebra 
 $A$ is symmetric if  there exists
an associative, non-degenerate, symmetric, $K$-bilinear form
$(-,-): A \times A \to K$. Classical examples of symmetric
algebras include in particular, 
 blocks of group algebras of finite groups and
Hecke algebras of finite Coxeter groups.
In fact, any algebra $A$ is the quotient algebra
of its trivial extension algebra $\T(A) = A \ltimes D(A)$,
which is a symmetric algebra.
By general theory, if $e$ is an idempotent of a symmetric algebra $A$,
then the idempotent algebra $e A e$ also is a symmetric algebra.

Brauer graph algebras play a prominent role in the representation theory of tame
symmetric algebras. 
Originally, R. Brauer
introduced the Brauer tree, which led to the description of
blocks of group algebras of finite groups of
finite representation type, and they 
are the basis for their classification up to  Morita equivalence 
\cite{Da,J,Ku},
see also \cite{Al}. 
Relaxing the condition on the
characteristic of the field, one gets Brauer tree algebras, and these
 occurred in the Morita equivalence classification of
 symmetric algebras of Dynkin type
$\bA_n$
\cite{GR,Rie}.
If one allows  arbitrary multiplicities, and also an arbitrary graph instead
of just a tree, one obtains Brauer graph algebras. 
These occurred in the
classification of  symmetric algebras of Euclidean type
$\widetilde{\bA}_n$
\cite{BS}.
It was shown in
\cite{Ro}
(see also \cite{Sch})
that the class of Brauer graph algebras coincides
with the class of symmetric special biserial algebras.
Symmetric special biserial algebras occurred also
in the Gelfand-Ponomarev classification of singular
Harish-Chandra modules over the Lorentz group
\cite{GP},
and as well in the context of 
 restricted Lie algebras,
or more generally infinitesimal group schemes,
\cite{FS1,FS2},
and in classifications of tame Hecke algebras
\cite{AIP,AP,EN}.
There are also results on derived  equivalence classifications of
Brauer graph algebras, and on the connection to Jacobian algebras
of quivers with potential, we refer to 
\cite{Ai,De,Ka,MS,MH,Ric1,Sch}.

We recall the definition of a Brauer graph algebra,
following \cite{Ro}, see also \cite{Sch}.  
A \emph{Brauer graph}
is a finite connected graph $\Gamma$,
with at least one edge
(possibly with loops and multiple edges)
such that for each vertex
$v$ of $\Gamma$, there is a cyclic ordering of the edges 
adjacent to $v$, and there is
a multiplicity $e(v)$ which is a positive integer.
Given a Brauer graph $\Gamma$, 
one defines the associated
Brauer quiver $Q_{\Gamma}$  as follows:
\begin{itemize}
 \item
  the vertices $Q_{\Gamma}$ are the edges of $\Gamma$;
 \item
  there is an arrow $i \to j$ in $Q_{\Gamma}$ if and only if
  $j$ is the consecutive edge of $i$ in the cyclic ordering
  of edges adjacent to  a vertex $v$ of $\Gamma$.
\end{itemize}

In this case we say that the arrow $i\to j$ is attached to $v$.
The quiver $Q_{\Gamma}$ is 2-regular (see Section~\ref{sec:bisalg}). 
Recall that a quiver is
2-regular if 
every vertex is the source and target 
of exactly two arrows.
Any 2-regular quiver has a canonical involution $({-})$ on the arrows,
namely if $\alpha$  is an arrow the $\bar{\alpha}$ is the other arrow
starting at the same vertex as $\alpha$.

The associated Brauer graph algebra $B_{\Gamma}$  is a quotient algebra
of $KQ_{\Gamma}$.
The cyclic ordering of the edges adjacent to a vertex $v$ of $\Gamma$ translates to 
a cyclic permutation of the arrows in $Q_{\Gamma}$, and if $\alpha$ is an arrow in this cycle,
we denote vertex $v$ by $v(\alpha)$.  
Let $C_{\alpha}$ be the product of the arrows in the cycle, in
the given order, starting with $\alpha$, this is an element in $KQ_{\Gamma}$. 
The associated \emph{Brauer graph algebra} $B_{\Gamma}$
is defined to be  $K Q_{\Gamma} / I_{\Gamma}$,
where $I_{\Gamma}$ is the ideal in the path algebra
$K Q_{\Gamma}$  generated by
the elements:
\begin{enumerate}[(1)]
 \item
  all paths $\alpha \beta$ of length $2$ in $Q_{\Gamma}$
  which are not subpaths of $C_{\alpha}$,
 \item
  $C_{\alpha}^{e(v(\alpha))} - C_{\bar{\alpha}}^{e(v(\bar{\alpha}))}$,
  for all arrows $\alpha$ of $Q_{\Gamma}$.
\end{enumerate}

\medskip
In \cite{ESk3}  and \cite{ESk4} we introduced and studied   
biserial weighted surface algebras, motivated by 
tame blocks of group algebras of finite groups.
Given a triangulation $T$ of a 2-dimensional real compact surface, 
with or without boundary,
and an orientation $\vv{T}$ of triangles in $T$, 
there is a natural way to define a quiver $Q(S, \vv{T})$. 
We  showed that these  quivers
have an algebraic description:  
they are precisely what we called triangulation quivers.
A triangulation quiver is a pair $(Q, f)$ where $Q$ is a 2-regular quiver, 
and $f$ is a permutation of arrows of order $3$  such that
 $t(\alpha)= s(f(\alpha))$ for each arrow $\alpha$ of $Q$.
A \emph{biserial weighted  surface algebra}  $B(S, \vv{T}, m_{\bullet})$ 
is then explicitly given  by the quiver $Q(S, \vv{T})$ and relations,
depending on a weight function $m_{\bullet}$, 
and if described using the triangulation quiver, 
we get a \emph{biserial weighted triangulation algebra}
$B(Q, f,m_{\bullet})$ (see Section~\ref{sec:bisalg}).

Algebras of generalized dihedral type 
(see \cite[Theorem 1]{ESk4}) 
which contain blocks with dihedral defect groups, 
turned out to be (up to socle deformation) idempotent algebras 
of biserial weighted surface algebras, 
for very specific idempotents.
Biserial weighted surface algebras
belong to the class of  Brauer graph algebras.
It is therefore  a natural question to ask which other Brauer graph algebras
occur as idempotent algebras of biserial weighted surface algebras. 
This is answered by our first main result.

\begin{main-theorem}
\label{th:main1}
Let $A$ be a basic, indecomposable, finite-dimensional
$K$-algebra 
over an algebraically closed field $K$
of dimension at least $2$.
Then the following statements are equivalent:
\begin{enumerate}[(i)]
  \item
    $A$ is a Brauer graph algebra.
  \item
    $A$ is isomorphic to the idempotent algebra $e B e$
    for a biserial weighted surface algebra $B$ 
    and an idempotent $e$ of $B$.
\end{enumerate}
\end{main-theorem}

The main ingredient for this is Theorem~\ref{th:4.1}. 
This
gives a canonical construction, which we call $*$-construction. 
A byproduct of
the proof of Theorem~\ref{th:main1} is the  following fact.

\begin{corollary*}
\label{cor:main}
Let $A$ be a Brauer graph algebra
over an algebraically closed field $K$.
Then $A$ is isomorphic to the idempotent algebra $e B e$
of a biserial weighted surface algebra 
$B = B(S,\vv{T},m_{\bullet})$,
for a surface $S$ without boundary, a triangulation $T$ of $S$
without self-folded triangles, and an idempotent $e$ of $B$.
\end{corollary*}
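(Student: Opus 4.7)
The plan is to apply the canonical $*$-construction provided by Theorem~\ref{th:4.1} directly to the given Brauer graph algebra $A = B_{\Gamma}$, and then verify the two geometric conditions by inspection of the construction. The $*$-construction takes the 2-regular Brauer quiver $Q_{\Gamma}$ and completes it, by adjoining fresh auxiliary vertices and arrows at each vertex of $\Gamma$, to a triangulation quiver $(Q^{*}, f^{*})$ equipped with a weight function $m^{*}_{\bullet}$; this quiver in turn realises an oriented triangulated surface $(S, \vv{T})$. The original vertices of $Q_{\Gamma}$ embed canonically into $Q^{*}$, and if $e$ denotes the sum of the primitive idempotents at those distinguished vertices, then the proof of Theorem~\ref{th:main1} already gives $A \cong eBe$ for $B = B(S, \vv{T}, m^{*}_{\bullet})$. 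So what remains is to argue that this particular $S$ has empty boundary and that $T$ has no self-folded triangles.

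For the absence of boundary, I would argue algebraically via the permutation $f^{*}$. In the dictionary between triangulation quivers and triangulated surfaces, boundary edges of $(S, T)$ correspond to orbits of $f^{*}$ that fail to be honest $3$-cycles on both arrows incident to some vertex. Because the $*$-construction adjoins, at every cyclic corner of $\Gamma$, a complete triangle worth of arrows closing up into a $3$-cycle under $f^{*}$, every arrow of $Q^{*}$ lies on a genuine $f^{*}$-orbit of length exactly $3$; hence no boundary edges arise and $S$ is closed.

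The more delicate and, I expect, main obstacle is ruling out self-folded triangles. A self-folded triangle corresponds to an $f^{*}$-orbit $\{\alpha, f^{*}(\alpha), (f^{*})^{2}(\alpha)\}$ in which two of the three arrows share a source or target, so that two of the three edges of the geometric triangle are glued together. I would check vertex-by-vertex in $\Gamma$ that the auxiliary vertices produced by the $*$-construction are fresh, and that each $3$-cycle of $f^{*}$ uses three distinct vertices of $Q^{*}$. The configurations that need care are those producing short cycles in $Q_{\Gamma}$, namely valency-one vertices of $\Gamma$ and pairs of parallel edges in $\Gamma$; I would verify that the auxiliary vertices inserted by the $*$-construction break any such short cycle into a $3$-cycle on three distinct vertices, so that no triangle of $T$ is self-folded. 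Together with the isomorphism $A \cong eBe$ from Theorem~\ref{th:main1}, this establishes the corollary.
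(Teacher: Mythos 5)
Your overall strategy (apply the $*$-construction and then check the two geometric conditions) is the right one, and your argument for the absence of boundary is correct: every $f^*$-orbit produced by the construction is an honest $3$-cycle $(\alpha''\ f(\alpha)'\ \varepsilon_{\alpha})$, so $(Q^*,f^*)$ has no loops fixed by $f^*$, which is exactly Theorem~\ref{th:4.1}(ii), and the realizing surface is closed. The gap is in the claim that a single application already removes all self-folded triangles. The three vertices of the $f^*$-triangle attached to an arrow $\alpha$ are $x_{\alpha}$, $t(\alpha)$ and $x_{f(\alpha)}$, and these are pairwise distinct only when $f(\alpha)\neq\alpha$. If $\alpha$ is a loop fixed by $f$ — that is, if the border $\partial(Q,f)$ of the biserial quiver is non-empty — then $x_{f(\alpha)}=x_{\alpha}$, the arrow $\varepsilon_{\alpha}$ is a loop at $x_{\alpha}$ not fixed by $f^*$, and the $f^*$-orbit $(\alpha'\ \varepsilon_{\alpha}\ \alpha'')$ is precisely the quiver of a self-folded triangle; this is illustration~(1) following the proof of Theorem~\ref{th:4.1}. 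Example~\ref{ex:4.5} is a concrete counterexample to your verification step: the Brauer graph there is a single loop, and $(Q^*,f^*)$ is the triangulation quiver of the sphere triangulated by \emph{two self-folded triangles}. You correctly flag valency-one vertices and parallel edges as needing care (those are in fact harmless, since $f$-orbits of length $2$ still give triangles on three distinct vertices), but you miss the configuration that actually breaks the argument, namely the $f$-fixed loops.

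The repair is the one the paper uses: iterate the construction. Since $(Q^*,f^*)$ has no loops fixed by $f^*$, a second application produces a triangulation quiver $(Q^{**},f^{**})$ with no loops at all, hence with neither boundary edges nor self-folded triangles (Theorem~\ref{th:4.1}(iii)), and $A\cong e^{**}B^{**}e^{**}$ for the composite idempotent $e^{**}=e^*\hat{e}$ (Theorem~\ref{th:4.1}(iv)). Realizing $(Q^{**},f^{**})$ as $(Q(S,\vv{T}),f)$ via Theorem~\ref{th:3.1} then yields the corollary. So your proposal is one iteration short; the rest of it is sound.
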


Moreover, we can adapt the $*$-construction to algebras
socle equivalent to Brauer graph algebras, 
and  prove an  analog for the main part of Theorem~\ref{th:main1}:

\begin{main-theorem}
\label{th:main3}
Let $A$ be a symmetric algebra over an algebraically
closed field $K$ which is socle equivalent 
but not isomorphic to a Brauer
graph algebra,  
and assume 
the Grothendieck group $K_0(A)$ 
has rank at least $2$. Then
\begin{enumerate}[(i)]
 \item
  $\charact(K)=2$, and 
	\item
$A$ is isomorphic to an idempotent algebra
$\bar{e} \bar{B} \bar{e}$, where
$\bar{B}$ is a socle deformed biserial weighted surface algebra 
$\bar{B}  = B(S,\vv{T},m_{\bullet},b_{\bullet})$.
Here $S$ is  a surface  with boundary, $T$ is a triangulation  of $S$
without self-folded triangles, and  $b_{\bullet}$ is a  border function.
\end{enumerate}

\end{main-theorem}

Recall  that an algebra $A$ is called \emph{periodic}
if it is periodic with respect to action of the syzygy
operator $\Omega_{A^e}$ in the module category
$\mod A^e$, where
$A^e = A^{\op} \otimes_K A$ is its enveloping algebra.
If  $A$ is a periodic algebra of period $n$
then all  indecomposable non-projective right $A$-modules are periodic
of period dividing $n$, with respect to 
the syzygy operator $\Omega_A$ in $\mod A$.
Periodic algebras are self-injective,
and have  connections with group theory,
topology, singularity theory and cluster algebras.
In \cite{ESk3}  and \cite{ESk5} we introduced and studied   
weighted surface algebras $\Lambda(S, \vv{T}, m_{\bullet},c_{\bullet})$,
which are tame, symmetric, and we showed that
they are  
(with one exception) 
periodic algebras of period $4$. 
They are
defined by the quiver $Q(S, \vv{T})$ and explicitly  given relations,
depending on a weight function $m_{\bullet}$ 
and a parameter function $c_{\bullet}$ 
(see Section~\ref{sec:proof4}).
Most biserial weighted surface algebras occur as
geometric degenerations of these periodic weighted surface algebras.

Our third  main result connects  Brauer graph algebras
with  a large class of periodic weighted surface algebras.

\begin{main-theorem}
\label{th:main4}
Let $A$ be a Brauer graph algebra
over an algebraically closed field $K$.
Then $A$ is isomorphic to an idempotent algebra $e \Lambda e$
of a periodic weighted surface algebra 
$\Lambda = \Lambda(S,\vv{T},m_{\bullet},c_{\bullet})$,
for a surface $S$ without boundary, a triangulation $T$ of $S$
without self-folded triangles, and an idempotent $e$ of $\Lambda$.
\end{main-theorem}

There are many idempotent
algebras of weighted surface algebras which are neither
Brauer graph algebras nor periodic algebras.
We give an example at the end of Section~\ref{sec:proof4}.

\bigskip

This paper is organized as follows.
In Section~\ref{sec:bisalg}
we recall basic facts on special biserial algebras
and show that 
Brauer graph algebras, 
symmetric special biserial algebras, and 
 symmetric algebras associated to 
weighted biserial quivers are  essentially the same.
In Section~\ref{sec:bisweight} 
we introduce biserial weighted surface algebras 
and present their basic properties.
In Section~\ref{sec:proof}
we prove 
Theorem~\ref{th:main1}. 
This contains an algorithmic construction
which may be of independent interest.
Sections~\ref{sec:proof3} and \ref{sec:proof4} 
contain the proofs of
of Theorems \ref{th:main3} and \ref{th:main4}, 
and related material.
In the final Section~\ref{sec:diagram}
we present a diagram showing the relations between
the main classes of algebras occurring in the paper.

For general background on the relevant representation theory
we refer to the books 
\cite{ASS,E5,SS,SY}, 
and 
we refer to \cite{E5,ESk1}
for the representation theory
of arbitrary self-injective special biserial algebras.

\section{Special biserial algebras}\label{sec:bisalg}

A \emph{quiver} is a quadruple $Q = (Q_0, Q_1, s, t)$
consisting of a finite set $Q_0$ of vertices,
a finite set $Q_1$ of arrows,
and two maps $s,t : Q_1 \to Q_0$ which associate
to each arrow $\alpha \in Q_1$ its source $s(\alpha) \in Q_0$
and  its target $t(\alpha) \in Q_0$.
We denote by $K Q$ the path algebra of $Q$ over $K$
whose underlying $K$-vector space has as its basis
the set of all paths in $Q$ of length $\geq 0$, and
by $R_Q$ the arrow ideal of $K Q$ generated by all paths 
in $Q$ of length $\geq 1$.
An ideal $I$ in $K Q$ is said to be \emph{admissible}
if there exists $m \geq 2$ such that
$R_Q^m \subseteq I \subseteq R_Q^2$.
If $I$ is an admissible ideal in $K Q$, then
the quotient algebra $K Q/I$ is called
a \emph{bound quiver algebra}, and is a finite-dimensional
basic $K$-algebra.
Moreover, $K Q/I$ is indecomposable if and only if
$Q$ is connected.
Every basic, indecomposable, finite-dimensional
$K$-algebra $A$ has a bound quiver presentation
$A \cong K Q/I$, where $Q = Q_A$ is the \emph{Gabriel
quiver} of $A$ and $I$ is an admissible ideal in $K Q$.
For a bound quiver algebra $A = KQ/I$, we denote by $e_i$,
$i \in Q_0$, the associated complete set of pairwise
orthogonal primitive idempotents of $A$. Then the modules 
$S_i = e_i A/e_i \rad A$ (respectively, $P_i = e_i A$),
$i \in Q_0$, form a  complete family of pairwise
non-isomorphic simple modules (respectively, indecomposable
projective modules) in $\mod A$.

Following \cite{SW}, an algebra $A$ is said to be
\emph{special biserial} if $A$ is isomorphic
to a bound quiver algebra $K Q/I$, where the bound
quiver $(Q,I)$ satisfies the following conditions:
\begin{enumerate}[(a)]
 \item[(a)]
  each vertex of $Q$ is a source and target of at most two arrows,
 \item[(b)]
  for any arrow $\alpha$ in $Q$ there are at most
  one arrow $\beta$ and at most one arrow $\gamma$
  with $\alpha \beta \notin I$ and $\gamma \alpha \notin I$.
\end{enumerate}

Background on special biserial algebras may be found for example in 
\cite{BR,E5,PS,SW,WW}.
Perhaps most important is the following, which  has been proved by 
Wald and Waschb\"usch
in \cite{WW} (see also \cite{BR,DS3} for alternative
proofs).

\begin{proposition}
\label{prop:2.1}
Every special biserial algebra is tame.
\end{proposition}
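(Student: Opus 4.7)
The plan is to prove tameness by classifying the indecomposable modules over a special biserial algebra explicitly, and then counting the families. Let $A = KQ/I$ be special biserial. I would first introduce the combinatorics of \emph{strings} and \emph{bands} in $(Q,I)$: a string is a reduced walk $w = c_1^{\varepsilon_1} c_2^{\varepsilon_2} \cdots c_n^{\varepsilon_n}$ in $Q$ (arrows or formal inverses) such that consecutive letters do not cancel and no subword (nor its inverse) lies in $I$; a band is a cyclic string which is not a proper power. Conditions (a) and (b) in the definition of a special biserial algebra are exactly what is needed to make this combinatorics work: at each vertex there are at most two ways to prolong a string in either direction.

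Next I would construct, following Butler--Ringel, the associated \emph{string modules} $M(w)$ and \emph{band modules} $M(b,\lambda,n)$ for $\lambda \in K^*$ and $n \geq 1$. These are given by explicit representations of $Q$ with a basis indexed by the vertices of the string/band and matrix entries dictated by the letters of the word; the relations in $I$ are satisfied thanks to condition (b). The second step is to show that every indecomposable $A$-module is either of the form $M(w)$ for a string $w$, of the form $M(b,\lambda,n)$ for a band $b$, or one of the finitely many indecomposable projective-injective modules which are neither string nor band. This classification is the content of \cite{WW}, and alternative proofs are in \cite{BR,DS3}; it is the genuinely hard input.

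With the classification in hand, tameness is essentially a counting argument. Fix a dimension $d$. There are only finitely many strings of length $\leq d$, hence only finitely many isomorphism classes of string modules of dimension $d$, and only finitely many indecomposable projective-injective modules in total. The band modules of dimension $d$ come from bands $b$ of length $\ell \mid d$ together with a pair $(\lambda,n)$ with $\ell n = d$; for each such $(b,n)$ the family $\{M(b,\lambda,n)\}_{\lambda \in K^*}$ is a one-parameter family, realized by an explicit $A$-$K[x,x^{-1}]$-bimodule that is free of finite rank over $K[x,x^{-1}]$. So up to isomorphism, all but finitely many indecomposables of dimension $d$ lie in finitely many such one-parameter families, which is precisely the definition of tameness.

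The main obstacle is the completeness of the string/band classification: showing that an arbitrary indecomposable $A$-module is isomorphic to some $M(w)$, $M(b,\lambda,n)$, or indecomposable projective-injective. The standard route is Wald--Waschb\"usch's functorial filtrations, which use the hypotheses (a) and (b) to split the socle filtration of an arbitrary module along the two possible ``directions'' at each vertex; the Butler--Ringel argument instead passes through a biserial cover and performs a careful analysis of endomorphism rings. Either route is technically involved, but once it is set up, the tameness statement follows formally from the counting above.
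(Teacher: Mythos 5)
Your outline is correct, but note that the paper offers no proof of this proposition at all: it simply attributes the result to Wald--Wasch\-b\"usch \cite{WW}, with \cite{BR,DS3} cited for alternative proofs. Your string-and-band argument, including the reduction of tameness to a counting of strings and of one-parameter families of band modules realized by $A$-$K[x,x^{-1}]$-bimodules, is precisely the standard proof contained in those references, and you correctly isolate the completeness of the string/band classification as the genuinely hard input.
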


If a special biserial algebra is in addition symmetric, 
there is a more convenient description. 
We propose the concept of  
a (\emph{weighted}) \emph{biserial quiver algebra}, 
which we will now
define. Later, 
in Theorem \ref{th:2.6} we will show that these algebras
are precisely special biserial symmetric algebras.

\begin{definition}\label{definition:2.2} 
A \emph{biserial quiver} is a pair $(Q,f)$,
where $Q = (Q_0,Q_1,s,t)$ is a finite connected quiver
and $f : Q_1 \to Q_1$ is a permutation 
of the arrows of $Q$ satisfying the following conditions:
\begin{enumerate}[(a)]
 \item  $Q$ is 2-regular, that is
  every vertex of $Q$ is the source and target 
  of exactly two arrows,
 \item
  for each arrow $\alpha \in Q_1$ 
  we have $s(f(\alpha)) = t(\alpha)$.
\end{enumerate}

Let $(Q,f)$ be a biserial quiver.
We  obtain another permutation
$g : Q_1 \to Q_1$ defined by
 $g(\alpha) = \overbar{f(\alpha)}$
for any $\alpha \in Q_1$, so that $f(\alpha)$ and
$g(\alpha)$ are the arrows starting at $t(\alpha)$.
Let  $\cO(\alpha)$ be the $g$-orbit of an arrow  
$\alpha$,  and set
$n_{\alpha} = n_{\cO(\alpha)} = |\cO(\alpha)|$.
We denote by $\cO(g)$ the set
of all $g$-orbits in $Q_1$.
A function
\[
  m_{\bullet} : \cO(g) \to \bN^* = \bN \setminus \{0\}
\]
is said to be a \emph{weight function} of $(Q,f)$.
We write briefly $m_{\alpha} = m_{\cO(\alpha)}$
for $\alpha \in Q_1$.
The multiplicity function $m_{\bullet}$
taking only value $1$ is said to be \emph{trivial}.
For any arrow $\alpha \in Q_1$, we single out 
the oriented cycle
\[
  B_{\alpha} = \Big( \alpha g(\alpha) \dots g^{n_{\alpha}-1}(\alpha)\Big)^{m_{\alpha}}
\]
of length $m_{\alpha} n_{\alpha}$.
The triple $(Q,f,m_{\bullet})$ is said to be a
(\emph{weighted}) \emph{biserial quiver}.

The associated \emph{biserial quiver algebra} $B = B(Q, f, m_{\bullet})$ 
is defined as follows. 
It is the quotient algebra
\[
  B(Q,f,m_{\bullet})
   = K Q / J(Q,f,m_{\bullet}),
\]
where $J(Q,f,m_{\bullet})$
is the ideal of the path algebra $KQ$ 
generated by the following  elements:
\begin{enumerate}[(1)]
 \item
  $\alpha f({\alpha})$,
  for all arrows $\alpha \in Q_1$,
 \item
  $B_{\alpha}
   - B_{\bar{\alpha}}$,
  for all arrows $\alpha \in Q_1$.
\end{enumerate}
We assume that $Q$ is not the quiver 
with one vertex and two loops
$\alpha$ and $\ba$ such that 
$\alpha = B_{\alpha}$ and $B_{\ba} = \ba$
are equal in $B$, that is
we exclude the 2-dimensional algebra 
isomorphic to $K[X]/(X^2)$.
Assume 
$m_{\alpha} n_{\alpha} = 1$, so that 
$\alpha = B_{\alpha}$ and $B_{\ba}$ are equal in $B$. 
By the above assumption, $B_{\ba}$ lies in the square 
of the radical of the algebra. 
Then $\alpha$ is not an arrow in the Gabriel quiver $Q_B$ of $B$,
and we call it a \emph{virtual loop}.
\end{definition}

The following  describes basic
properties of (weighted) biserial quiver algebras.

\begin{proposition}
\label{prop:2.3}
Let $(Q,f,m_{\bullet})$ be a weighted biserial quiver
and
$B = B(Q,f,m_{\bullet})$.
Then ${B}$ is a basic, indecomposable, 
finite-dimensional symmetric special biserial algebra
with $\dim_K {B} = \sum_{\cO \in \cO(g)} m_{\cO} n_{\cO}^2$.
\end{proposition}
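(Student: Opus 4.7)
The plan is to establish, in turn, the dimension formula, the special-biserial axioms, basicness and indecomposability, and finally the symmetric structure of $B = B(Q,f,m_{\bullet})$.

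For the spanning set, the type-(1) relations $\alpha f(\alpha) = 0$ force every nonzero path to follow the permutation $g$ at each step, so the only nonzero paths starting at a vertex $v$ with outgoing arrows $\alpha, \bar{\alpha}$ are $e_v$ together with the initial subpaths of $B_{\alpha}$ and $B_{\bar{\alpha}}$, and the type-(2) relations identify only the two full cycles. Thus $\dim e_v B = m_{\alpha} n_{\alpha} + m_{\bar{\alpha}} n_{\bar{\alpha}}$, and summing over vertices, each arrow $\alpha$ being counted once through its source, gives $\sum_{\alpha \in Q_1} m_{\alpha} n_{\alpha} = \sum_{\mathcal{O} \in \mathcal{O}(g)} m_{\mathcal{O}} n_{\mathcal{O}}^2$. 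Linear independence I would obtain by degenerating to the monomial string algebra obtained by omitting one of the two full-cycle identifications at every vertex, where the path basis is standard, and then lifting back. Condition (a) of special biserial is exactly the 2-regularity hypothesis; condition (b) holds because $\alpha f(\alpha) \in J$ leaves only $\beta = g(\alpha)$ with $\alpha \beta \notin J$, and dually only $\gamma = g^{-1}(\alpha)$ with $\gamma \alpha \notin J$. Basicness follows from the primitivity and orthogonality of the idempotents $e_v$, and indecomposability from the connectedness of $Q$.

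The heart of the argument is the symmetric property. I would first show that for each vertex $v$, the socle of the indecomposable projective $P_v = e_v B$ is one-dimensional, spanned by the common class $\omega_v$ of $B_{\alpha}$ and $B_{\bar{\alpha}}$; this element is nonzero by the dimension count (including when $\alpha$ is a virtual loop, where $\omega_v = B_{\bar{\alpha}}$ still has positive length), and it annihilates every arrow out of $v$ because any longer initial segment exceeds both admissible strings. I would then define a $K$-linear form $\varphi \colon B \to K$ by setting $\varphi(\omega_v) = 1$ for each $v$ and $\varphi = 0$ on the remaining basis paths. Non-degeneracy is immediate, since any nonzero path can be extended to a full cycle on either side. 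For the cyclic identity $\varphi(xy) = \varphi(yx)$ I would reduce by bilinearity to products of basis paths $pq$: if $\varphi(pq) \neq 0$ then $pq$ represents some $\omega_v = B_{\alpha}$, in which case $qp$ is a cyclic rotation of the same cycle, namely $B_{g^k(\alpha)}$, which represents $\omega_{s(g^k(\alpha))}$ and therefore also has $\varphi$-value $1$.

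The main obstacle is this symmetry check: one has to verify carefully that products $pq$ of subpaths drawn from two possibly distinct cycles meeting at a shared vertex can realise a socle element only when the concatenation lies entirely within a single $g$-orbit, and then track the rotation $pq \leftrightarrow qp$ through the type-(2) identifications $B_{\alpha} = B_{\bar{\alpha}}$ so as to see that the $\varphi$-value is preserved. Once this is confirmed, $\varphi$ is an associative non-degenerate symmetrising form, and the four assertions of the proposition follow simultaneously.
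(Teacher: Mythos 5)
Your proposal is correct and follows essentially the same route as the paper: the same path basis of $e_vB$ (the idempotent, the proper initial subwords of $B_\alpha$ and $B_{\bar\alpha}$, and the common socle element), the same dimension count, and the same symmetrizing form $\varphi$ supported on the cycles $B_\alpha$. The extra details you supply (the degeneration argument for linear independence and the explicit rotation check for $\varphi(pq)=\varphi(qp)$) merely flesh out steps the paper asserts without elaboration.
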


\begin{proof} 
It follows from the definition 
that $B$ is the special biserial bound quiver 
algebra $K Q_B / I_B$, where $Q_B$ is obtained 
from $Q$ by removing all virtual loops, 
and where $I_B = J(Q,f,m_{\bullet}) \cap K Q_B$.
Let $i$ be a vertex of $Q$ and $\alpha, \bar{\alpha}$ 
the two arrows starting at $i$. 
Then the indecomposable projective $B$-module 
$P_i = e_i B$ has a basis given by $e_i$ together with
all initial proper subwords of $B_{\alpha}$ and $B_{\bar{\alpha}}$, and
and $B_{\alpha} (= B_{\bar{\alpha}})$, and hence
$\dim_K P_i = m_{\alpha} n_{\alpha} + m_{\bar{\alpha}} n_{\bar{\alpha}}$.
Note also that the union of these bases gives
a basis of $B$ consisting of paths in $Q$. 
We deduce that
$\dim_K {B} = \sum_{\cO \in \cO(g)} m_{\cO} n_{\cO}^2$.
As well, the indecomposable projective module $P_i$
has simple socle generated by  
$B_{\alpha} ( = B_{\bar{\alpha}})$.
We define a  symmetrizing $K$-linear form
$\varphi : B \to K$ as follows. If  $u$ is a path in $Q$ which belongs
to the above basis, we set 
$\varphi(u) = 1$ if $u = B_{\alpha}$ for an arrow $\alpha \in Q_1$,
and $\varphi(u) = 0$ otherwise.
Then $\varphi(a b) = \varphi(b a)$ for all elements $a,b \in B$
and $\Ker \varphi$ does not contain any non-zero one-sided
ideal of $B$, and consequently $B$ is a symmetric algebra
(see \cite[Theorem~IV.2.2]{SY}).
\end{proof}

\medskip

We wish to 
compare Brauer graph algebras and biserial quiver algebras. For this
we start analyzing  the combinatorial data.
Let $Q$ be a connected 2-regular quiver. 
We call a permutation $g$ of the arrows
of $Q$ \emph{admissible} if for every arrow $\alpha$ 
we have $t(\alpha)= s(g(\alpha))$. 
That is,  the arrows along a cycle of $g$ can be concatenated in $Q$.
The multiplicity function of a Brauer graph $\Gamma$
taking only value $1$ is said to be \emph{trivial}.
\medskip

\begin{lemma}\label{lem:2.4} 
There is  a bijection between Brauer graphs $\Gamma$ 
with trivial multiplicity function
and pairs $(Q, g)$ where $Q$ is a connected 2-regular quiver, 
and $g$ is an admissible
permutation of the arrows of $Q$.
\end{lemma}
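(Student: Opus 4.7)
The plan is to exhibit explicit mutually inverse maps $\Phi \colon \Gamma \mapsto (Q_\Gamma, g_\Gamma)$ and $\Psi \colon (Q,g) \mapsto \Gamma$.

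For $\Phi$, given a Brauer graph $\Gamma$ with trivial multiplicity function, take $Q_\Gamma$ to be its Brauer quiver as defined in the introduction, and define $g_\Gamma$ on arrows by sending an arrow $\alpha \colon i \to j$ attached to a vertex $v$ of $\Gamma$ to the unique arrow $\beta \colon j \to k$ also attached to $v$ (so $k$ is the successor of $j$ in the cyclic order at $v$). The admissibility condition $t(\alpha) = s(g_\Gamma(\alpha))$ is immediate from the construction, and the 2-regularity of $Q_\Gamma$ is standard (already noted in the introduction). The $g_\Gamma$-orbits are precisely the sets of arrows attached to a common vertex of $\Gamma$, and their cyclic ordering as $g_\Gamma$-cycles reproduces the cyclic ordering of edges at that vertex.

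For $\Psi$, given $(Q,g)$, define $\Gamma := \Psi(Q,g)$ to have edges equal to the vertices of $Q$ and vertices equal to the $g$-orbits in $Q_1$, with the following incidence: for an orbit $\cO = (\alpha_1, \alpha_2, \ldots, \alpha_n)$ with $g(\alpha_k) = \alpha_{k+1}$ cyclically, the edges adjacent to $\cO$, in cyclic order, are $s(\alpha_1), s(\alpha_2), \ldots, s(\alpha_n)$. Admissibility $t(\alpha_k) = s(\alpha_{k+1})$ makes this list consistent, and 2-regularity of $Q$ assigns to each edge $i$ exactly two endpoints, namely the $g$-orbits of the two arrows $\alpha, \ba$ starting at $i$. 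The graph $\Gamma$ has at least one edge since $Q_0 \neq \emptyset$, and it is connected: any arrow $\alpha \colon i \to j$ of $Q$ makes the edges $i$ and $j$ of $\Gamma$ share the endpoint $\cO(\alpha)$, so a walk in the underlying graph of $Q$ lifts to a walk in $\Gamma$.

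Finally, I would verify $\Psi \circ \Phi = \id$ and $\Phi \circ \Psi = \id$ by direct unpacking: under $\Psi \circ \Phi$ the vertices of $\Gamma$ reappear as $g_\Gamma$-orbits with matching cyclic orderings of incident edges; under $\Phi \circ \Psi$ the arrows of $Q$ are recovered as the consecutive pairs $(s(\alpha), s(g(\alpha))) = (s(\alpha), t(\alpha))$ within each orbit, and the permutation $g$ matches the permutation built from cyclic successors at each vertex of $\Gamma$. The step I would handle most carefully, and which is the main potential pitfall, is the treatment of loops and multiple edges in $\Gamma$ (dually, loops in $Q$): a loop in $\Gamma$ at $v$ must be regarded as attached to $v$ with multiplicity two in the cyclic ordering, and a loop arrow in $Q$ must be counted as contributing separately as both a source and a target incidence at its common endpoint. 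The cleanest way to organize this uniformly is via \emph{flags}, i.e.\ pairs consisting of an edge of $\Gamma$ together with one of its incidences at a vertex; flags of $\Gamma$ then correspond bijectively to arrows of $Q$, the cyclic ordering of edges at $v$ becomes the cyclic list of flags incident to $v$ (i.e.\ a $g_\Gamma$-orbit), and the pairing of the two flags of each edge becomes the canonical involution $\alpha \mapsto \ba$ on arrows, after which the bijection $\Phi \leftrightarrow \Psi$ is essentially tautological.
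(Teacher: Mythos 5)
Your proposal is correct and follows essentially the same route as the paper: the same two constructions $\Gamma \mapsto (Q_\Gamma, g_\Gamma)$ and $(Q,g) \mapsto \Gamma$ (vertices of $\Gamma$ are the $g$-cycles, edges are the vertices of $Q$), with the inverse-check left as a direct unpacking. Your extra care about loops and multiple edges via flags is a more explicit version of what the paper handles implicitly in step (1a) and in Remark~\ref{rem:2.5}, and your connectivity check is a small point the paper omits.
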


\begin{proof}
(1) \ Given $\Gamma$, we take the quiver $Q=Q_{\Gamma}$, as defined
in the introduction. 

\smallskip

(1a) \ We show  that  $Q_{\Gamma}$ is 2-regular. 
Take an edge $i$ of $\Gamma$, it is adjacent 
to vertices $v, w$ (which may be equal).
If $v\neq w$ then the edge $i$ occurs both in the cyclic ordering 
around $v$ and of $w$, so there are two arrows starting at $i$, and there
are two arrows ending at $i$.
If $v=w$ then the edge $i$ occurs twice in the cyclic ordering 
of edges adjacent to $v$, so again there are two arrows starting at $i$ and
two arrows ending at $i$.

\smallskip

(1b) \ We define an (admissible) permutation $g$ on the arrows.
Given $\alpha: i\to j$, let $v$ be the vertex
such that $\alpha$ is attached to $v$,
then there is a unique edge $k$ adjacent to $v$ 
such that $i, j, k$ are consecutive edges in the ordering around $v$, 
and hence a unique  
arrow $\beta: j\to k$, also \lq attached\rq{}  to $v$, 
 and we set $g(\alpha):= \beta$.
This defines an admissible permutation on the arrows.
Writing $g$  as a product of disjoint cycles, gives  
a bijection between the cycles of $g$ and the vertices of $\Gamma$. 
Namely, let the cycle of $g$  correspond to $v$ 
if it consists of the arrows attached to $v$.

\smallskip

(2) \ Suppose we are given a connected 2-regular quiver $Q$ 
and an admissible permutation $g$, written as a product 
of disjoint cycles.
Define a graph $\Gamma$ with
 vertices  the cycles of $g$, and  edges  the vertices of $Q$.
Each cycle of $g$ 
defines a cyclic ordering of the edges adjacent to
 the vertex corresponding to  this cycle. Hence we get a Brauer graph.

\smallskip

(3)  It is clear that these give a bijection.
\end{proof}

\begin{remark}\label{rem:2.5} 
In part (1b) of the above proof, we may have $i=j$. There are two such cases. 
If the edge $i$ is adjacent to two distinct vertices of $\Gamma$
then  $i$ is the only edge adjacent to a vertex $v$ and 
we have $g(\alpha)=\alpha$. We call $\alpha$ an \emph{external} loop. 
Otherwise the edge 
$i$ is a loop of $\Gamma$, and then $g(\alpha)\neq \alpha$. 
In  this case the cycle of $g$ passes twice through vertex $i$
of the quiver. 
We call $\alpha$ an \emph{internal} loop.
\end{remark}

The Brauer graph $\Gamma$ comes with a multiplicity 
function $e$ defined on the vertices. 
Given $(Q, g)$, we take the same multiplicity function,
 defined on the cycles of $g$, which gives the  function $m_{\bullet}$
 which we  have called a weight function.
The permutation $g$ determines the permutation $f$ of the arrows where 
 $f(\alpha) = \overline{g(\alpha)}$ for any arrow $\alpha$.
 Clearly $f$ is also admissible, and $f$ and $g$ determine each other.

We have seen that the combinatorial data for $B_{\Gamma}$ are the same as
the combinatorial data for $B(Q, f, m_{\bullet})$. Therefore
$B_{\Gamma}$ is in fact equal to $B(Q, f, m_{\bullet})$.

In the definition of a biserial quiver we focus on $(Q, f)$, this is motivated
by the connection to 
biserial weighted surface algebras, which 
we will define later.

\bigskip

The following compares various algebras. 
The equivalence of the statements (i) and (iii)
was already obtained  by Roggenkamp in 
\cite[Sections 2 and 3]{Ro} 
(see also \cite[Proposition~1.2]{Ai} and \cite[Theorem~1.1]{Sch}).
We include it, for completeness.

\begin{theorem}
\label{th:2.6}
Let $A$ be a basic, indecomposable algebra of dimension 
at least $2$, over an algebraically closed field $K$.
The following  are equivalent:
\begin{enumerate}[(i)]
  \item
    $A$ is a Brauer graph algebra.
 \item
  $A$ is isomorphic to an algebra $B(Q,f,m_{\bullet})$ 
  where $(Q,f,m_{\bullet})$ is a (weighted)  biserial quiver.
 \item
  $A$ is a symmetric special biserial algebra.
\end{enumerate}
\end{theorem}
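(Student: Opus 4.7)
The plan is to close the cycle of implications using the combinatorial dictionary of Lemma~\ref{lem:2.4} for (i) $\Leftrightarrow$ (ii), Proposition~\ref{prop:2.3} for (ii) $\Rightarrow$ (iii), and to concentrate the real effort on (iii) $\Rightarrow$ (ii), which is the content of Roggenkamp's theorem and recovers a weighted biserial quiver from an arbitrary symmetric special biserial algebra.

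For (i) $\Leftrightarrow$ (ii), I would invoke Lemma~\ref{lem:2.4} to translate the Brauer graph data $(\Gamma,e)$ into a triple $(Q,g,m_\bullet)$ with $Q$ a connected 2-regular quiver, $g$ an admissible permutation on its arrows, and $m_\bullet$ the weight function obtained by transporting $e$ through the bijection between vertices of $\Gamma$ and $g$-cycles. Setting $f(\alpha) := \overline{g(\alpha)}$ produces a weighted biserial quiver in the sense of Definition~\ref{definition:2.2}. The two ideals then coincide generator for generator: the Brauer relation~(1) forbids $\alpha\beta$ when $\beta$ is not a subpath of $C_\alpha$, but at $t(\alpha)$ there are only the two arrows $g(\alpha)$ and $f(\alpha)$, so the only forbidden path is $\alpha f(\alpha)$; the Brauer relation~(2) becomes $B_\alpha - B_{\bar{\alpha}}$ by the very definition of $B_\alpha$. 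The reverse construction is transparent.

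For (iii) $\Rightarrow$ (ii), let $A \cong KQ_A/I$ be symmetric special biserial with $\dim_K A \geq 2$. The first step is to extend $Q_A$ to a 2-regular quiver. The standard identity $\dim_K \Ext^1_A(S_i,S_j) = \dim_K \Ext^1_A(S_j,S_i)$ available for symmetric algebras forces the in- and out-degrees at every vertex of $Q_A$ to coincide; special biseriality bounds both by $2$, and indecomposability together with $\dim_K A \geq 2$ excludes isolated vertices, so the common value is $1$ or $2$. I would attach a single virtual loop at every vertex of valency $1$ to obtain a 2-regular quiver $Q$. Next define $f : Q_1 \to Q_1$ as follows: for a non-virtual arrow $\alpha$, biseriality produces a unique outgoing arrow $\beta$ at $t(\alpha)$ with $\alpha\beta \notin I$, and $f(\alpha)$ is declared to be the other outgoing arrow at $t(\alpha)$ (possibly a virtual loop); for a virtual loop $\alpha$ at a vertex $v$, $f(\alpha)$ is the unique non-virtual outgoing arrow at $v$. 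Then $(Q,f)$ is a biserial quiver, and the weight function $m_\bullet$ is read off the dimensions of the indecomposable projectives $P_i = e_iA$ which, by special biseriality, are spanned by the initial subpaths of the two maximal paths $B_\alpha$ and $B_{\bar{\alpha}}$ through $i$.

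It remains to produce an isomorphism $A \cong B(Q,f,m_\bullet)$. The monomial relations $\alpha f(\alpha) = 0$ hold in $A$ by the very choice of $f$. For the socle relations, at each vertex $i$ both $B_\alpha$ and $B_{\bar{\alpha}}$ are nonzero paths landing in the one-dimensional socle $\soc(e_iA) = S_i$, so there is a unique scalar $c_i \in K^{*}$ with $B_\alpha = c_i B_{\bar{\alpha}}$ in $A$. The genuine obstacle is to rescale the arrow generators of $Q$ so that every such $c_i$ becomes $1$ simultaneously; this is a normalization (cocycle) problem, which I would resolve by propagating rescalings along a spanning tree of $Q_A$ and using the $g$-cyclicity of the socle-forming paths to verify consistency on the remaining arrows. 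Once the scalars are normalized, the relations (1) and (2) of $B(Q,f,m_\bullet)$ hold in $A$, so the universal property of the quotient $KQ/J(Q,f,m_\bullet)$ yields a surjective algebra homomorphism $B(Q,f,m_\bullet) \twoheadrightarrow A$, which is an isomorphism by the dimension formula of Proposition~\ref{prop:2.3} matched against the analogous dimension count $\dim_K P_i = m_\alpha n_\alpha + m_{\bar{\alpha}} n_{\bar{\alpha}}$ obtained from the structure of maximal paths in $A$.
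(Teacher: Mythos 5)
Your overall architecture matches the paper's: (i)$\Leftrightarrow$(ii) via the combinatorial dictionary of Lemma~\ref{lem:2.4}, (ii)$\Rightarrow$(iii) via Proposition~\ref{prop:2.3}, and the work concentrated in (iii)$\Rightarrow$(ii). However, two steps in (iii)$\Rightarrow$(ii) are genuinely problematic. First, the ``standard identity'' $\dim_K \Ext^1_A(S_i,S_j) = \dim_K \Ext^1_A(S_j,S_i)$ is false for symmetric algebras: the Brauer tree algebra of a star with three edges around one vertex (a symmetric Nakayama algebra) has Gabriel quiver a $3$-cycle, so $\Ext^1(S_1,S_2) \neq 0$ while $\Ext^1(S_2,S_1) = 0$. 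The conclusion you want ($|s^{-1}(i)| = |t^{-1}(i)|$) is true, but it needs the argument the paper gives: if $|s^{-1}(i)|=1$ then special biseriality forces $e_iA$ to be uniserial, and since $e_iA$ is the injective hull of $S_i$ for a symmetric algebra, also $|t^{-1}(i)|=1$ (and dually).

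Second, and more seriously, the normalization of the scalars $c_i$ with $B_\alpha = c_i B_{\bar{\alpha}}$ is the real content of this implication, and your proposed resolution does not work. Rescaling an arrow $\gamma \mapsto \lambda\gamma$ multiplies $B_\alpha$ by $\lambda^{m_\alpha}$ for every $\alpha$ in the $g$-orbit of $\gamma$ and leaves all other $B_\alpha$ unchanged; hence the only available degrees of freedom are one scalar per $g$-orbit, and the ratio $c_i$ is multiplied by a quotient of the scalars attached to $\cO(\alpha)$ and $\cO(\bar{\alpha})$. In particular, when the two arrows $\alpha,\bar{\alpha}$ at a vertex $i$ lie in the \emph{same} $g$-orbit (which happens, e.g., at internal loops of the Brauer graph), no rescaling whatsoever changes $c_i$, so no spanning-tree propagation can normalize it; one must prove $c_i = 1$ outright. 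The paper does this with the symmetrizing form $\vf$: after rescaling one arrow per $g$-orbit so that $\vf(B_\alpha)=1$ for all $\alpha$, the identity $c_\alpha = \vf(c_\alpha B_\alpha) = \vf(c_{\bar{\alpha}} B_{\bar{\alpha}}) = c_{\bar{\alpha}}$ kills the remaining obstruction. Your proposal never invokes the symmetrizing form, which is precisely the tool needed here; without it the ``cocycle problem'' you correctly identify is left unsolved.
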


\begin{proof} As we have just seen,   
(i) and (ii) are equivalent.
The implication 
(ii) $\Rightarrow$ (iii)
follows from
Proposition \ref{prop:2.3}.

We prove now 
(iii) $\Rightarrow$ (ii).
Assume that $A$ is a basic symmetric special biserial algebra, let
$A= KQ_A/I$ where $Q_A$ is the Gabriel quiver of $A$. 
We will define a (weighted) biserial quiver $(Q,f,m_{\bullet})$
and show that $A$ is isomorphic to $B(Q,f,m_{\bullet})$.
Since $A$ is special biserial, for each vertex $i$ of $Q_A$,
we have $|s^{-1}(i)| \leq 2$ and $|t^{-1}(i)| \leq 2$.
The algebra $A$ is  symmetric, therefore for each vertex $i \in Q_0$,
we have $|s^{-1}(i)|= |t^{-1}(i)|$: Namely, if
$|s^{-1}(i)| = 1$ then by the special biserial relations, the
projective module $e_iA$ is uniserial. It is isomorphic
to the injective hull of the simple module $S_i$, and hence
$|t^{-1}(i)|=1$.  
If $|t^{-1}(i)|=1$ then by the same reasoning, applied to 
$D(Ae_i) \cong e_iA$ it follows that $|s^{-1}(i)| = 1$.

Let $\Delta:= \{ i\in (Q_A)_0 \mid   |s^{-1}(i)|= 1 \}$, to each $i\in \Delta$ we adjoin 
a loop $\eta_i$ at $i$ to the quiver $Q_A$, which then gives a 2-regular
quiver. Explicitly, let 
$Q := (Q_0,Q_1,s,t)$ with $Q_0 = (Q_A)_0$ and
$Q_1$ is the disjoint union $(Q_A)_1\bigcup \{ \eta_i: i\in \Delta\}.$ 

We define a permutation $f$ of $Q_1$.
For each $i \in \Delta$,
there are unique arrows $\alpha_i$ and $\beta_i$ in $Q_A$ with
$t(\alpha_i) = i = s(\beta_i)$,
and we set $f(\alpha_i) = \eta_i$ and $f(\eta_i) = \beta_i$.
If $\alpha$ is any  arrow of $Q_A$ 
with $t(\alpha)$ not in $\Delta$,
we define $f(\alpha)$ to be the unique arrow in $(Q_A)_1$
with $\alpha f(\alpha) \in I$.
With this, $(Q, f)$ is a biserial quiver.

We define now a weight function
$m_{\bullet} : \cO(g) \to \bN^*$,
where $g = \bar{f}$.
For each $j \in \Delta$,
we have $g(\eta_j)  = \eta_j$, 
and we set $m_{\cO(\eta_j)} = 1$.
Let $\alpha$ be some arrow of $Q_A$ 
starting at vertex $i$, and
let 
$n_{\alpha} = |\cO(\alpha)|$.
Since $A$ is  symmetric special biserial,
there exists $m_{\alpha} \in \bN^*$ such  that
\[
   B_{\alpha} := 
     \Big(\alpha g(\alpha) \dots g^{n_{\alpha}-1}(\alpha)\Big)^{m_{\alpha}}
\]
is a maximal cyclic path in $Q_A$
which does not belong to $I$,
and spans the socle 
of the indecomposable projective module
$e_i A$.
The integer $m_{\alpha}$ is constant on the
$g$-orbit of $\alpha$ and we may define 
$m_{\cO(\alpha)} = m_{\alpha}$.

It remains to show that by suitable scaling of arrows one
obtains the stated relations involving paths $B_{\alpha}$.
Fix a symmetrizing linear form $\vf$ for $A$.  
Fix an orbit of $g$, say $\cO(\nu)$, 
there is a non-zero scalar $d_{\nu}$ such
that for all arrows $\alpha$ in this orbit we have
$$\vf(B_{\alpha}) \ = \ d_{\nu}.
$$
We may assume $d_{\nu}=1$. 
Namely, 
we can choose in $\cO(\nu)$ an
arrow, $\alpha$ say, and replace it by $\lambda \alpha$ where
$\lambda^{m_{\alpha}} = d_{\nu}^{-1}$.  
The cycles are disjoint, and if we do this for each cycle 
then we have $\vf(B_{\alpha}) = 1$ for all arrows $\alpha$.

Let $i$ be a vertex of $Q_A$ with $|s^{-1}(i)| = 2$,
and  let $\alpha, \bar{\alpha}$ be the two arrows starting at $i$.
Then there are non-zero scalars 
$c_{\alpha}$ and $c_{\bar{\alpha}}$ 
such that  
$c_{\alpha} B_{\alpha} = c_{\bar{\alpha}} B_{\bar{\alpha}}$
in $A$.
Then we have 
$$c_{\alpha} = c_{\alpha}\vf(B_{\alpha}) 
= \vf(c_{\alpha}B_{\alpha}) =  \vf(c_{\ba}B_{\ba}) = c_{\ba}\vf(B_{\ba})
= c_{\ba}.
$$
Hence  we can  cancel these scalars and  obtain the required relations.
With this, there is a canonical isomorphism of $K$-algebras
$A = K Q_A / I \to B(Q,f,m_{\bullet})$.
\end{proof}

\medskip

We will from now 
suppress the word 'weighted', in analogy to the convention 
for Brauer graph algebras, where the multiplicity function 
is part of the definition but is not explicitly mentioned.

\bigskip
We will study idempotent algebras, and it is important
that any idempotent algebra of a special biserial symmetric algebra
is again special biserial symmetric.

\begin{proposition}
\label{prop:2.7}
Let $A$ be a symmetric special biserial algebra.
Assume $e$ is an idempotent of $A$ which is a sum of some of the
$e_i$ associated to vertices of $Q_A$.
Then $e A e$ also is a symmetric special biserial algebra.
\end{proposition}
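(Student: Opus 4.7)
My approach is to verify the two defining properties of a symmetric special biserial algebra separately, exploiting how each is inherited from $A$ through the natural restriction to $eAe$.

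For the symmetric structure, let $\varphi : A \to K$ be a symmetrizing $K$-linear form and set $\psi := \varphi|_{eAe}$; cyclicity $\psi(xy) = \psi(yx)$ is inherited trivially. For non-degeneracy, take $0 \neq y \in eAe$, so $y = eye$, and pick $x \in A$ with $\varphi(yx) \neq 0$. Using $e^2 = e$ and cyclicity of $\varphi$, I compute $\varphi(y \cdot exe) = \varphi((ye)(xe)) = \varphi((xe)(ye)) = \varphi(xy) = \varphi(yx) \neq 0$, so $z := exe \in eAe$ witnesses non-degeneracy of $\psi$. Hence $eAe$ is symmetric.

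For the special biserial structure, by Theorem~\ref{th:2.6} I may present $A = B(Q, f, m_{\bullet})$ and write $e = \sum_{i \in S} e_i$ for some $S \subseteq Q_0$. Since each $e_i$ remains primitive in $eAe$, the Gabriel quiver $Q'$ of $eAe$ has vertex set $S$. I describe the arrows of $Q'$ combinatorially. For each $i \in S$ and each of the two arrows $\gamma \in \{\alpha, \bar{\alpha}\}$ of $Q$ starting at $i$, the nonzero initial subpaths of $\gamma \, g(\gamma) \, g^2(\gamma) \cdots$ in $A$ form a single chain (this is exactly the special biserial condition for $A$); let $p_{\gamma}$ be the shortest such initial subpath whose terminal vertex lies in $S$, when one exists. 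Then the image of $p_{\gamma}$ in $eAe$ contributes an arrow of $Q'$ out of $i$, giving at most two arrows out of $i$, and dually at most two arrows ending at $i$. This is condition~(a) for $eAe$. For condition~(b), if $\alpha'$ is an arrow of $Q'$ represented by a path $p$ whose last arrow in $Q$ is $\eta$, then any arrow $\beta'$ of $Q'$ with $\alpha' \beta' \neq 0$ in $eAe$ must lift to a path $q$ in $Q$ starting with $g(\eta)$, the unique nonzero successor of $\eta$ in $A$; so $\beta'$ is uniquely determined by $\alpha'$ whenever it exists. The dual argument bounds incoming extensions.

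The delicate part is the bookkeeping around these path walks: a single $g$-cycle of $A$ may meet $S$ several times and therefore contribute several arrows of $Q'$, and the chain starting with $\gamma$ may wrap around its cycle (with multiplicity governed by $m_{\bullet}$) many times before terminating. One must verify that the family $\{p_{\gamma}\}$ described above really does exhaust a basis of $e(\rad A)e / (e(\rad A)e)^2$, so that $Q'$ is the genuine Gabriel quiver of $eAe$. Once this is in place, admissibility of the ideal defining $eAe$ follows from admissibility of the ideal defining $A$, and conditions (a) and (b) verified above complete the proof that $eAe$ is special biserial.
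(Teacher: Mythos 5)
Your proposal is correct, but it packages the argument differently from the paper. The paper gives a single, unified construction: starting from $A = B(Q,f,m_{\bullet})$, it builds an explicit weighted biserial quiver $(\tilde{Q},\tilde{f},\tilde{m}_{\bullet})$ — vertices are the support of $e$, arrows are the shortest paths $\alpha g(\alpha)\cdots g^p(\alpha)$ returning to that support, $\tilde{f}$ and $\tilde{m}_{\bullet}$ are induced from $f$ and $m_{\bullet}$ — and identifies $eAe$ with $B(\tilde{Q},\tilde{f},\tilde{m}_{\bullet})$, whence symmetry and special biseriality both follow at once from Proposition~\ref{prop:2.3}. You instead split the claim: symmetry via restricting the symmetrizing form (your non-degeneracy computation with $z = exe$ is correct, and this is the ``general theory'' fact the paper alludes to in the introduction), and special biseriality via a direct check of conditions (a) and (b) for the Gabriel quiver of $eAe$. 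Your combinatorial core — contracting each $g$-path to its first return to the support of $e$ — is exactly the paper's $\tilde{\alpha}$, so the two proofs agree where it matters; what the paper's formulation buys is the \emph{stronger} conclusion that $eAe$ is again a biserial quiver algebra with explicitly computable data $(\tilde{Q},\tilde{f},\tilde{m}_{\bullet})$, which is precisely what is reused later (e.g.\ in the proof of Theorem~\ref{th:4.1}), while your route is more elementary on the symmetry side. Two small remarks: the ``when one exists'' hedge is unnecessary, since the $g$-cycle through $\alpha$ closes up at $s(\alpha)$, which lies in the support of $e$, so a first return always occurs within one turn of the cycle; and the basis/bookkeeping step you flag but do not carry out (that the $p_{\gamma}$ span $e\rad(A)e$ modulo its square, up to discarding the loops that land in the socle, i.e.\ the virtual loops of Definition~\ref{definition:2.2}) is asserted with comparable brevity in the paper, so I would not count it as a gap.
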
 

\begin{proof}
We may assume that 
$A = B(Q,f,m_{\bullet})$
for a weighted biserial quiver $(Q,f,m_{\bullet})$
and $e A e$ is indecomposable, and let 
$Q = (Q_0,Q_1,s,t)$.
We will show that
$eAe = (\tilde{Q},\tilde{f},\tilde{m}_{\bullet})
 = K Q/J(\tilde{Q},\tilde{f},\tilde{m}_{\bullet})$
for a weighted biserial quiver
$(\tilde{Q},\tilde{f},\tilde{m}_{\bullet})$.
We define $\tilde{Q}_0$ to be the set of all vertices $i \in Q_0$
such that $e$ is the sum of the primitive idempotents
$e_i$.
For each arrow $\alpha \in Q_1$ with
$s(\alpha) \in \tilde{Q}_0$, we denote by $\tilde{\alpha}$
the shortest path in $Q$ of the form
$\alpha g(\alpha) \dots g^p(\alpha)$ with
$p \in \{0,1,\dots,n_{\alpha}-1\}$ and
$t(g^p(\alpha)) \in \tilde{Q}_0$.
Such  a path exists because
$\alpha g(\alpha) \dots g^{n_{\alpha}-1}(\alpha)$
is a cycle around vertex  $s(\alpha) =t(g^{n_{\alpha}-1}(\alpha))$ in $\tilde{Q}_0$.
Then we define $\tilde{Q}_1$ to be set of paths $\tilde{\alpha}$ in $Q$
for all arrows $\alpha \in Q_1$ with $s(\alpha) \in \tilde{Q}_0$.
Moreover, for
$\tilde{\alpha} = \alpha g(\alpha) \dots g^p(\alpha)$,
we set
$\tilde{s} (\tilde{\alpha}) = s(\alpha)$
and
$\tilde{t} (\tilde{\alpha}) = t(g^p(\alpha))$.
This defines a $2$-regular quiver
$\tilde{Q} = (\tilde{Q}_0,\tilde{Q}_1,\tilde{s},\tilde{t})$.
Further, for each arrow
$\tilde{\alpha} = \alpha g(\alpha) \dots g^p(\alpha)$
in $\tilde{Q}_1$,
there is exactly one arrow
$\tilde{\beta} = \beta g(\beta) \dots g^r(\beta)$
in $\tilde{Q}_1$
such that
$\tilde{t} (\tilde{\alpha}) = t(g^p(\alpha)) 
 = s(\beta) = \tilde{s} (\tilde{\beta})$
and $f(\alpha) = \beta$,
and we set
$\tilde{f} (\tilde{\alpha}) = \tilde{\beta}$.
This defines a biserial quiver $(\tilde{Q},\tilde{f})$.
Let $\tilde{g}$ be the permutation of $\tilde{Q}_1$
associated to $\tilde{f}$, and $\cO(\tilde{g})$ the set
of $\tilde{g}$-orbits in $\tilde{Q}_1$.
Then we define the weight function
$\tilde{m}_{\bullet} : \cO(\tilde{g}) \to \bN^*$
of $(\tilde{Q},\tilde{f})$ by setting
$\tilde{m}_{\cO(\tilde{\alpha})} = m_{\cO(\alpha)}$
for each arrow $\tilde{\alpha} \in \tilde{Q}_1$.
With these,  the biserial quiver algebra
$B(\tilde{Q},\tilde{f},\tilde{m}_{\bullet})
 = K \tilde{Q}/J(\tilde{Q},\tilde{f},\tilde{m}_{\bullet})$
is isomorphic to $eAe$.
\end{proof}

We end this section with an example illustrating 
Theorem~\ref{th:2.6}.
This also 
shows that  an idempotent algebra of a Brauer graph algebra
need not be indecomposable, by taking   $e = 1_{B_{\Gamma}} - e_4$.

\begin{example}
\label{ex:2.8}
Let $\Gamma$ be the Brauer graph
\[
\begin{tikzpicture}
[-,scale=1.2]
\coordinate (3) at (-3,0);
\coordinate (6) at (-1,0);

\node (a) at (-2.4,0) {$\bullet$};
\node at (-2.6,0) {$a$};
\node (b) at (-1,0) {$\bullet$};
\node at (-.8,0.2) {$b$};
\node (c) at (1,0) {$\bullet$};
\node at (.75,0.2) {$c$};
\node (p) at (0,-1) {$\bullet$};
\node at (.2,-1.2) {$p$};

\node (d) at (2.2,0.3) {$\bullet$};
\node at (2.4,0.3) {$d$};

\draw(-3.04,0) node[left]{\footnotesize$7$} arc (180:350:1);
\draw(-3.04,0) arc (180:10:1);
\draw(-3.96,0) node[left]{\footnotesize$6$} arc (180:353:1.5);
\draw(-3.96,0) arc (180:7:1.5);

\draw(2.97,-.2) arc (0:158:1);
\draw(2.97,-.2) arc (360:178:1);

\draw(2.97,-.2) arc (360:270:1) node[below]{\footnotesize$2$};

\draw(3.25,.5) node[right]{\footnotesize$1$} arc (0:196:1.175);
\draw(3.25,.5) arc (360:325:1.175);
\draw(2.075,-.675) arc (270:313:1.175);
\draw(2.075,-.675) arc (270:213:1.175);

\draw 
(a) edge node[above]{\footnotesize$8$} (b)
(b) edge node[above]{\footnotesize$4$} (c)
(b) edge node[below left]{\footnotesize$5$} (p)
(c) edge node[below right]{\footnotesize$3$} (d)
;
\end{tikzpicture}
\]
where we take the clockwise ordering
of the edges around each vertex.
Then $B_{\Gamma}$ is the symmetric algebra
$B(Q,f,m_{\bullet})$ with 
 biserial quiver $(Q,f)$ 
\[
\xymatrix{
	&&&&& 
	1
	\ar@<-.5ex>[dd]_{\beta}
	\ar@<+.5ex>[dd]^{\alpha}
	\\ 
	8
	\ar@<+.5ex>[r]^{\varphi}
	\ar@(dl,ul)^{a}[] 
	& 
	7
	\ar@<+.5ex>[l]^{\psi}
	\ar@<+.5ex>[r]^{\xi}
	& 
	6
	\ar@<+.5ex>[l]^{\eta}
	\ar[rr]^{\mu}
	&& 
	4
	\ar[ru]^{\delta}
	\ar[ld]^{\varrho}
	&&
	3
	\ar[lu]_{\sigma}
	\ar@(ur,dr)^{d}[]
	\\ 
	&&&
	5
	\ar[lu]^{\nu}
	\ar@(rd,ld)^{p}[]
	&&
	2
	\ar[lu]^{\omega}
	\ar[ru]_{\gamma}
} 
\]
where the  $f$-orbits are
$(\alpha \ \omega \ \varrho \ p \ \nu \ \mu \ \delta \ \beta \ \gamma \ d \ \sigma)$,
$(\eta \ \xi)$,
$(a \ \varphi \ \psi)$. Then
the $g$-orbits are
$\cO(a) = (a)$,
$\cO(d) = (d)$,
$\cO(p) = (p)$,
$$\cO(\alpha) = (\alpha \ \gamma \ \sigma \ \beta \ \omega \ \delta), \ \ 
\cO(\varrho) = (\varrho \ \nu \ \eta \ \psi \ \varphi \ \xi \ \mu).
$$
The weight function $m_{\bullet} : \cO(g) \to \bN^*$
is as before given by the multiplicity function of the Brauer graph 
$\Gamma$. We note that 
$C_{\alpha} = \alpha \gamma \sigma \beta \omega \delta$
and 
$C_{\bar{\alpha}} = C_{\beta} 
 = \beta \omega \delta  \alpha \gamma \sigma$,
and
$v(\alpha) = c = v(\beta)$. 
\end{example}

\section{Biserial weighted surface algebras}\label{sec:bisweight}

In this section 
we introduce  biserial weighted surface algebras
and describe their basic properties.

In this  paper, by a \emph{surface}
we mean a connected, compact, $2$-dimensional real
manifold $S$, orientable or non-orientable,
with boundary or without boundary.
It is well known that every surface $S$ admits
an additional structure of a finite
$2$-dimensional triangular cell complex,
and hence a triangulation (by the deep Triangulation Theorem
(see for example \cite[Section~2.3]{Ca})).

For a positive natural number $n$, we denote by $D^n$ the unit disk
in the $n$-dimensional Euclidean space $\bR^n$,
formed by all points of distance $\leq 1$ from the origin.
Then the boundary $\partial D^n$ of $D^n$ is the unit sphere
$S^{n-1}$ in $\bR^n$, formed by all points of distance $1$
from the origin.
Further, by an $n$-cell we mean a topological space
homeomorphic to the open disk 
$\intt D^n = D^n \setminus \partial D^n$.
In particular,
$S^0 = \partial D^1$ consists of two points.
Moreover, we define
$D^0 = \intt D^0$ to be a point.

We refer to \cite[Appendix]{H} for some basic topological
facts about cell complexes.

Let $S$ be a surface.
In the paper, by a 
\emph{finite $2$-dimensional triangular cell complex structure} 
on $S$ we mean a finite family of continuous maps
$\varphi_i^n : D_i^n \to S$, with $n \in \{0,1,2\}$
and $D_i^n = D^n$,
satisfying the following conditions:
\begin{enumerate}[(1)]
 \item
  Each $\varphi_i^n$ restricts to a homeomorphism from
  $\intt D_i^n$ to the $n$-cell $e_i^n = \varphi_i^n(\intt D_i^n)$ of $S$,
  and these cells are all disjoint and their union is $S$.
 \item
  For each $2$-dimensional cell $e_i^2$, $\varphi_i^2(\partial D_i^2)$
  is the union of $k$ $1$-cells and $m$ $0$-cells,
  with $k \in \{2,3\}$ and $m \in \{1,2,3\}$.
\end{enumerate}
Then the closures  $\varphi_i^2(D_i^2)$ of all $2$-cells $e_i^2$
are called \emph{triangles} of $S$,
and the closures  $\varphi_i^1(D_i^1)$ of all $1$-cells $e_i^1$
are called \emph{edges} of $S$.
The collection $T$ of all triangles  $\varphi_i^2(D_i^2)$ 
is said to be a \emph{triangulation} of $S$.
We assume that such a triangulation $T$ of $S$
has at least two  different edges,
or equivalently, there are at least two  different
$1$-cells in the considered triangular cell complex structure on $S$.
Then $T$ is a finite collection $T_1,\dots,T_n$ of triangles
of the form
\begin{gather*}
\qquad
\begin{tikzpicture}[auto]
\coordinate (a) at (0,2);
\coordinate (b) at (-1,0);
\coordinate (c) at (1,0);
\draw (a) to node {$b$} (c)
(c) to node {$c$} (b);
\draw (b) to node {$a$} (a);
\node (a) at (0,2) {$\bullet$};
\node (b) at (-1,0) {$\bullet$};
\node (c) at (1,0) {$\bullet$};
\end{tikzpicture}
\qquad
\raisebox{7ex}{\mbox{or}}
\qquad
\begin{tikzpicture}[auto]
\coordinate (a) at (0,2);
\coordinate (b) at (-1,0);
\coordinate (c) at (1,0);
\draw (c) to node {$b$} (b)
(b) to node {$a$} (a);
\draw (a) to node {$a$} (c);
\node (a) at (0,2) {$\bullet$};
\node (b) at (-1,0) {$\bullet$};
\node (c) at (1,0) {$\bullet$};
\end{tikzpicture}
%
%
\raisebox{7ex}{\LARGE =}
\ \,
\begin{tikzpicture}[auto]
\coordinate (c) at (0,0);
\coordinate (a) at (1,0);
\coordinate (b) at (0,-1);
\draw (c) to node {$a$} (a);
\draw (b) arc (-90:270:1) node [below] {$b$};
\node (a) at (1,0) {$\bullet$};
\node (c) at (0,0) {$\bullet$};
\end{tikzpicture}
%
\\
\mbox{$a,b,c$ pairwise different}
\qquad
\quad
\mbox{$a,b$ different (\emph{self-folded triangle})}
\end{gather*}
such that every edge of such a triangle in $T$ is either
the edge of exactly two triangles, is the self-folded
edge, or lies on the boundary.
We note that a given surface $S$ admits many
finite $2$-dimensional triangular cell complex structures, 
and hence triangulations.
We refer to \cite{Ca,KC,Ki} for
general background on surfaces and
constructions of surfaces from plane models.

Let $S$ be a surface and
$T$ a triangulation $S$.
To each triangle $\Delta$ in $T$ we may associate an orientation
\[
\begin{tikzpicture}[auto]
\coordinate (a) at (0,2);
\coordinate (b) at (-1,0);
\coordinate (c) at (1,0);
\coordinate (d) at (-.08,.25);
\draw (a) to node {$b$} (c)
(c) to node {$c$} (b)
(b) to node {$a$} (a);
\draw[->] (d) arc (260:-80:.4);
\node (a) at (0,2) {$\bullet$};
\node (b) at (-1,0) {$\bullet$};
\node (c) at (1,0) {$\bullet$};
\end{tikzpicture}
\raisebox{7ex}{\!\!$=(abc)$}
\raisebox{7ex}{\quad or \ \ }
\begin{tikzpicture}[auto]
\coordinate (a) at (0,2);
\coordinate (b) at (-1,0);
\coordinate (c) at (1,0);
\coordinate (d) at (.08,.25);
\draw (a) to node {$b$} (c)
(c) to node {$c$} (b)
(b) to node {$a$} (a);
\draw[->] (d) arc (-80:260:.4);
\node (a) at (0,2) {$\bullet$};
\node (b) at (-1,0) {$\bullet$};
\node (c) at (1,0) {$\bullet$};
\end{tikzpicture}
\raisebox{7ex}{\!\!$=(cba)$,}
\]
if $\Delta$ has pairwise different edges $a,b,c$, and
\[
\begin{tikzpicture}[auto]
\coordinate (c) at (0,0);
\coordinate (a) at (1,0);
\coordinate (b) at (0,-1);
\coordinate (d) at (.38,-.08);
\draw (c) to node {$a$} (a);
\draw (b) arc (-90:270:1) node [below] {$b$};
\node (a) at (1,0) {$\bullet$};
\node (c) at (0,0) {$\bullet$};
\draw[->] (d) arc (-10:-350:.4);
\end{tikzpicture}
\raisebox{7ex}{$=(aab)=(aba)$,}
\]
if $\Delta$ is self-folded, with the self-folded edge $a$,
and the other edge $b$.
Fix an orientation of each triangle $\Delta$ of $T$,
and denote this choice by $\vv{T}$.
Then
the pair $(S,\vv{T})$ is said to be a
\emph{directed triangulated surface}.
To each directed triangulated surface $(S,\vv{T})$
we associate the quiver $Q(S,\vv{T})$ whose vertices
are the edges of $T$ and the arrows are defined as
follows:
\begin{enumerate}[(1)]
 \item
  for any oriented triangle $\Delta = (a b c)$ in $\vv{T}$
  with pairwise different edges $a,b,c$, we have the cycle
  \[
    \xymatrix@C=.8pc{a \ar[rr] && b \ar[ld] \\ & c \ar[lu]}
    \raisebox{-7ex}{,}
  \]
 \item
  for any self-folded triangle $\Delta = (a a b)$ in $\vv{T}$,
  we have the quiver
  \[
    \xymatrix{ a \ar@(dl,ul)[] \ar@/^1.5ex/[r] & b \ar@/^1.5ex/[l]} ,
  \]
 \item
  for any boundary edge $a$ in ${T}$,
  we have the loop
  \[
    {\xymatrix{ a \ar@(dl,ul)[]}} .
  \]
\end{enumerate}
Then $Q = Q(S,\vv{T})$ is a triangulation quiver in
the following sense (introduced independently by Ladkani
in \cite{La}).

A \emph{triangulation quiver} is a pair $(Q,f)$,
where $Q = (Q_0,Q_1,s,t)$ is a finite connected quiver
and $f : Q_1 \to Q_1$ is a permutation
on the set $Q_1$ of arrows of $Q$ satisfying
the following conditions:
\begin{enumerate}[(a)]
 \item[(a)]
  every vertex $i \in Q_0$ is the source and target of exactly two
  arrows in $Q_1$,
 \item[(b)]
  for each arrow $\alpha \in Q_1$, we have $s(f(\alpha)) = t(\alpha)$,
 \item[(c)]
  $f^3$ is the identity on $Q_1$.
\end{enumerate}
Hence, a triangulation quiver $(Q,f)$ is a biserial quiver
$(Q,f)$ such that $f^3$ is the identity.

For the quiver $Q = Q(S,\vv{T})$ of a
directed triangulated surface $(S,\vv{T})$,
the permutation $f$ on its set of arrows
is defined as follows:
\begin{enumerate}[(1)]
 \item
  \raisebox{3ex}%
  {\xymatrix@C=.8pc{a \ar[rr]^{\alpha} && b \ar[ld]^{\beta} \\ & c \ar[lu]^{\gamma}}}%
  \quad
  $f(\alpha) = \beta$,
  $f(\beta) = \gamma$,
  $f(\gamma) = \alpha$,

  for an oriented triangle $\Delta = (a b c)$ in $\vv{T}$,
  with pairwise different edges $a,b,c$,
 \item
  \raisebox{0ex}%
  {\xymatrix{ a \ar@(dl,ul)[]^{\alpha} \ar@/^1.5ex/[r]^{\beta} & b \ar@/^1.5ex/[l]^{\gamma}}}
  \quad
  $f(\alpha) = \beta$,
  $f(\beta) = \gamma$,
  $f(\gamma) = \alpha$,

  for a self-folded triangle $\Delta = (a a b)$ in $\vv{T}$,\vspace{1mm}
 \item
  \raisebox{0ex}%
  {\xymatrix{ a \ar@(dl,ul)[]^{\alpha}}}
  \quad
  $f(\alpha) = \alpha$,\vspace{1mm}

  for a boundary edge $a$ of ${T}$.
\end{enumerate}

If $(Q, f)$ is a 
triangulation quiver, then the quiver
$Q$ is  $2$-regular.
\emph{We will  consider only the triangulation
quivers with at least two vertices.}
Note that  different directed triangulated
surfaces (even of different genus) may lead to the same
triangulation quiver (see \cite[Example~4.4]{ESk3}).


The following theorem 
is a slightly stronger version of 
\cite[Theorem~4.11]{ESk3}
(see also \cite[Example~8.2]{ESk4} for the case with two vertices).

\begin{theorem}
\label{th:3.1}
Let $(Q,f)$ be a triangulation quiver
with at least two vertices.
Then there exists a directed triangulated surface
$(S,\vv{T})$ such that 
$S$ is orientable, 
$\vv{T}$ is a coherent orientation of triangles in $T$, and
$(Q,f) = (Q(S,\vv{T}),f)$.
\end{theorem}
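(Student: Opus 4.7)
The plan is to build on the construction of \cite[Theorem 4.11]{ESk3}, which produces a directed triangulated surface realizing any triangulation quiver with at least two vertices; the additional content of the statement above is to ensure that the surface $S$ can be chosen to be orientable and that the chosen orientation $\vv{T}$ of the triangles is coherent. The degenerate case with exactly two vertices can be handled directly by the explicit realizations exhibited in \cite[Example 8.2]{ESk4}. The general idea is straightforward: give every triangle the same ``spin,'' and glue edges so that this spin is preserved.

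More concretely, for each $f$-orbit of length three, say $\{\alpha, f(\alpha), f^2(\alpha)\}$, I would take a copy of the standard oriented 2-simplex $\Delta^{(\alpha)}$ with boundary traversed counterclockwise, and label its three sides in this cyclic order by the vertices $s(\alpha)$, $s(f(\alpha))$, $s(f^2(\alpha))$ of $Q$. For each fixed point of $f$ (necessarily a loop in $Q$), the corresponding vertex of $Q$ is declared to be a boundary edge of $S$, with no triangle glued on one side. For every other vertex $v \in Q_0$, the 2-regularity of $Q$ yields exactly two labeled sides to identify (both lying on a single triangle in the self-folded case), and we identify them by the unique homeomorphism for which the two induced boundary directions on the resulting edge are opposite.

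It then remains to verify three things: that $S$ is a 2-manifold with boundary, that the family $\vv{T}$ of chosen orientations is coherent (which then implies that $S$ is orientable), and that $(Q(S,\vv{T}), f) = (Q, f)$. The third is immediate by construction, and coherence is built into the opposite-direction gluing rule, so the main obstacle is the local structure of $S$ at each 0-cell: its link must be a circle (interior vertex) or an arc (boundary vertex). This is the combinatorial heart of the proof, and it is controlled by the permutation $g = \bar{f}$; iterating $g$ on an arrow traces out precisely the successive arrows meeting a fixed 0-cell of a triangle, so the $g$-orbits (interrupted at the fixed-point loops of $f$) determine both the 0-cells of $S$ and the cyclic order of edges meeting each of them. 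This verification is already essentially the content of \cite[Theorem 4.11]{ESk3}, so the strengthening to the orientable, coherent setting reduces to the orientation bookkeeping carried out above.
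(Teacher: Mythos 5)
Your proposal is correct, but it takes a genuinely different route from the paper. The paper proves Theorem~\ref{th:3.1} by induction on the number $n(Q,f)$ of $f$-orbits of length three: the base cases are the two- and three-vertex triangulation quivers (the self-folded disk, resp.\ the examples cited from \cite{ESk4}), and the inductive step runs the cut-and-reglue ``reconstruction steps'' of \cite[Theorem 4.11]{ESk3}, adjusted so that the projective plane is replaced by the self-folded disk and the detached oriented triangle is glued back coherently to the inductively obtained $(S',\vv{T'})$. You instead build $S$ in one step as a quotient of a disjoint union of coherently oriented $2$-simplices, one per length-three $f$-orbit, with sides labelled by the sources of the arrows in $f$-cyclic order, orientation-reversing identifications of the two sides carrying each non-border vertex label (forced by $2$-regularity, with the self-folded case occurring when both sides lie on one triangle), and free boundary edges at the $f$-fixed loops. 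This is much closer in spirit to the paper's own Remark~\ref{rem:3.2} (the ribbon-graph/Brauer-graph thickening followed by capping the triangular faces) than to the official proof. Your approach buys a self-contained, non-inductive argument in which orientability and coherence are manifest by construction (all triangles spun the same way, all gluings orientation-reversing); what it owes in exchange is the verification that the quotient is a $2$-manifold, which you correctly reduce to the link condition at $0$-cells governed by the $g$-orbits -- this is in fact automatic for any edge-pairing of polygons (each link component is a circle or an arc, and the $0$-cells of the quotient are exactly these components), so the appeal to \cite[Theorem 4.11]{ESk3} there is a safe, if slightly informal, shortcut. The identity $(Q(S,\vv{T}),f)=(Q,f)$ and the handling of the two-vertex case are as you say immediate from the labelling, so no base case from \cite[Example 8.2]{ESk4} is really needed.
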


\begin{proof}
This is a minor adjustment of the proof of
Theorem~4.11 in \cite{ESk3} which we will now present.
We denote by $n(Q,f)$ 
the number of $f$-orbits in $Q_1$ of length $3$.
Note that $n(Q,f) \geq 1$ because $Q$ has at least two vertices.
There is exactly one triangulation quiver with two vertices, namely
\[
  \xymatrix{
     a \ar@(dl,ul)[]^{\alpha} \ar@/^1.5ex/[r]^{\beta} 
     & b \ar@/^1.5ex/[l]^{\gamma} \ar@(ur,dr)[]^{\sigma}
  }
\]
with
$f(\alpha) = \beta$,
$f(\beta) = \gamma$,
$f(\gamma) = \alpha$,
$f(\sigma) = \sigma$,
and it is the triangulation quiver associated to the self-folded
triangulation of the disk
\[
\begin{tikzpicture}[auto]
\coordinate (c) at (0,0);
\coordinate (a) at (1,0);
\coordinate (b) at (0,1);
\draw (c) to node {$a$} (a);
\draw (b) arc (90:-270:1) node [above] {$b$};
\node (a) at (1,0) {$\bullet$};
\node (c) at (0,0) {$\bullet$};
\end{tikzpicture}
\]
with $b$ a boundary edge.
It is also known that the theorem holds for all triangulation quivers
with three vertices (see  \cite[Examples 4.3 and 4.4]{ESk4} and 
Example~\ref{ex:4.5}). 
Therefore, we may assume that  $n(Q,f) \geq 2$ and 
$Q$ has at least four vertices.
Now our induction assumption is:
For any triangulation quiver $(Q',f')$ with at least two vertices
and $n(Q',f') < n(Q,f)$ 
there exists a directed triangulated surface 
$(S',\vv{T'})$ such that $S'$ is orientable,
$\vv{T'}$ is a coherent orientation of triangles in $T'$,
and $(Q',f') = (Q(S',\vv{T'}),f')$.
Then we proceed as in the reconstruction steps (1) and (2)
of the proof of \cite[Theorem~4.11]{ESk3},
with the following adjustments.
In  step (1), we replace the projective plane $\mathbb{P}$
by the disk with self-folded triangulation,
described above.
In  step (2), we glue the oriented triangle
\[
\begin{tikzpicture}[auto]
\coordinate (a) at (0,1.8);
\coordinate (b) at (-1.2,0);
\coordinate (c) at (1.2,0);
\coordinate (d) at (-.08,.25);
\draw (a) to node {$b$} (c)
(c) to node {$c$} (b)
(b) to node {$a$} (a);
\draw[->] (d) arc (260:-80:.4);
\node (a) at (0,1.8) {$\bullet$};
\node (b) at (-1.2,0) {$\bullet$};
\node (c) at (1.2,0) {$\bullet$};
\end{tikzpicture}
\]
with pairwise different edges, in a coherent way
with the corresponding triangles of the directed
triangulated surface $(S',\vv{T'})$,
constructed in this step.
\end{proof}

\begin{remark}
\label{rem:3.2}
There is an alternative proof of Theorem~\ref{th:3.1}.
According to Lemma~\ref{lem:2.4} and Theorem~\ref{th:2.6},
we may associate to a triangulation quiver $(Q,f)$
a Brauer graph $\Gamma$ with trivial multiplicity
function such that $B_{\Gamma} \cong B(Q,f,\mathds{1})$,
where $\mathds{1}$ is the trivial weight function of $(Q,f)$.
In the Brauer graph $\Gamma$, the vertices correspond
to the $g$-orbits in $Q_1$ and the edges to the
vertices of $Q$. Thickening the edges of $\Gamma$
we obtain an oriented surface whose border is given
by the faces of $\Gamma$,
corresponding to the $f$-orbits in $Q_1$.
Since $(Q,f)$ is a triangulation quiver, the faces are either triangles
or (internal) loops.
Capping now all triangle faces by disks $D^2$ we obtain 
a directed triangulated surface $(S,\vv{T})$
such that $(Q,f) = (Q(\cT,\vv{T}),f)$.
\end{remark}

\begin{remark}
\label{rem:3.3}
We would like to stress that the setting of directed
triangulated surfaces is natural for the purposes
of a self-contained representation theory 
of symmetric tame algebras of non-polynomial growth
which we are currently developing.
In particular, this gives the option of changing
orientation of any triangle independently,
keeping the same surface and triangulation.
\end{remark}

Let $(Q,f)$ be a triangulation quiver, this is in particular a
biserial quiver as introduced in Definition \ref{definition:2.2}. 
With the same notation, for a weight function
a weight function $m_{\bullet} : \cO(g) \to \bN^*$,
the associated weighted biserial quiver algebra
\[
  B(Q,f,m_{\bullet})
   = K Q / J(Q,f,m_{\bullet})
\]
is said to be a \emph{biserial weighted triangulation algebra}.
Moreover, if $(Q,f) = (Q(S, \vv{T}),f)$ 
for a directed triangulation surface   
$(S,\vv{T})$,
then $B(Q(S, \vv{T}),f,m_{\bullet})$
is called a \emph{biserial weighted surface algebra},
and denoted by $B(S, \vv{T}, m_{\bullet})$
(see  \cite{ESk3} and \cite{ESk4}).

 Biserial weighted surface algebras belong to the
class of algebras of generalized dihedral type,  
which generalize blocks of group algebras
with dihedral defect groups.  
They are introduced and studied in \cite{ESk4}.
We end this section by giving two examples of
biserial weighted surface algebras.

\begin{example}
\label{ex:3.3}
Consider the disk $D = D^2$ with the triangulation $T$
and orientation $\vv{T}$ of triangles in $T$ as follows
\[
\begin{tikzpicture}[auto]
\coordinate (o) at (0,0);
\coordinate (a) at (0,1.5);
\coordinate (b) at (-1.5,0);
\coordinate (c) at (1.5,0);
\coordinate (d) at (0,-1.5);
\coordinate (e) at (.67,-.37);
\coordinate (f) at (-.83,-.37);
\draw (a) to node {3} (o) to node {4} (d);
\draw[->] (e) arc (260:-80:.4);
\draw[->] (f) arc (260:-80:.4);
\draw (c) arc (0:180:1.5) node [left] {$1$} 
                  arc (180:360:1.5) node [right] {$2$} ;
\node (o) at (o) {$\bullet$};
\node (a) at (a) {$\bullet$};
\node (d) at (d) {$\bullet$};
\end{tikzpicture}
\]

Then the associated triangulation quiver
$(Q(D,\vv{T}),f)$ is of the form
\[
  \xymatrix{
  & 
    3
      \ar@<-.5ex>[dd]_{\beta}
      \ar[rd]^{\omega}
  \\ 
    1
      \ar[ru]^{\alpha}
      \ar@(dl,ul)^{\xi}[]
  &&
    2
      \ar[ld]^{\sigma}
      \ar@(ur,dr)^{\eta}[]
  \\ 
  &
    4
      \ar[lu]^{\gamma}
      \ar@<-.5ex>[uu]_{\delta}
  } 
\]
with  $f$-orbits
$(\alpha\, \beta\, \gamma)$, 
$(\sigma\, \delta\, \omega)$, 
$(\xi)$, 
$(\eta)$.
Then  the $g$-orbits are
$\cO(\alpha) = (\alpha\, \omega\, \eta\, \sigma\, \gamma\, \xi)$ 
and
$\cO(\beta) = (\beta\, \delta)$. 
Hence a weight function
$m_{\bullet} : \cO(g) \to \bN^*$
is given by two positive integers 
$m_{\cO(\alpha)} = m$
and
$m_{\cO(\beta)} = n$.
Then the associated biserial weighted surface algebra
$B(D,\vv{T},m_{\bullet})$
is given by the above quiver and the relations:
\begin{align*}
 (\alpha  \omega \eta \sigma \gamma \xi)^m
  &= 
 (\xi \alpha  \omega \eta \sigma \gamma)^m
  ,
&
 \xi^2 &= 0, 
&
\alpha \beta &= 0,
&
\sigma \delta &= 0,
\\ 
 (\sigma \gamma \xi \alpha  \omega \eta)^m
  &= 
 (\eta \sigma \gamma \xi \alpha  \omega)^m
  ,
&
 \eta^2 &= 0, 
&
\beta \gamma &= 0,
&
\delta \omega &= 0,
\\
 (\omega \eta \sigma \gamma \xi \alpha )^m
  &= 
 (\beta \delta)^n
  ,
&
\!\!\!\!\!\!\!\!\!\!\!\!\!\!\!\!\!\!\!\!
 (\gamma \xi \alpha  \omega \eta \sigma)^m
  &= 
 (\delta \beta)^n
  ,
&
\gamma  \alpha &= 0,
&
\omega \sigma &= 0.
\end{align*}
\end{example}

\begin{example}
\label{ex:3.5}
Consider the torus $\bT$ with the triangulation $T$
and orientation $\vv{T}$ of triangles in $T$ as follows
\[
\begin{tikzpicture}[auto]
\coordinate (o) at (0,0);
\coordinate (a) at (0,1.5);
\coordinate (b) at (-1.5,0);
\coordinate (c) at (1.5,0);
\coordinate (d) at (0,-1.5);
\coordinate (e) at (-.08,.25);
\coordinate (f) at (.08,-.25);
\draw (a) to node {2} (c)
(c) to node {1} (d)
(d) to node {2} (b)
(b) to node {1} (a);
\draw (b) to (c);
\draw (b) to node {3} (o);
\draw[->] (e) arc (260:-80:.4);
\draw[->] (f) arc (80:-260:.4);
\node (a) at (a) {$\bullet$};
\node (b) at (b) {$\bullet$};
\node (c) at (c) {$\bullet$};
\node (d) at (d) {$\bullet$};
\end{tikzpicture}
\]
Then the associated triangulation quiver
$(Q(\bT,\vv{T}),f)$ is of the form
\[
  \xymatrix@R=3.pc@C=1.8pc{
    1
    \ar@<.45ex>[rr]^{\alpha_1}
    \ar@<-.45ex>[rr]_{\beta_1}
    && 2
    \ar@<.45ex>[ld]^{\alpha_2}
    \ar@<-.45ex>[ld]_{\beta_2}
    \\
    & 3
    \ar@<.45ex>[lu]^{\alpha_3}
    \ar@<-.45ex>[lu]_{\beta_3}
  }
\]
with  $f$-orbits
$(\alpha_1\, \alpha_2\, \alpha_3)$ and $(\beta_1\, \beta_2\, \beta_3)$.
\ 
Then $g$ has only one orbit which is 
$(\alpha_1\, \beta_2\, \alpha_3\, \beta_1\, \alpha_2\, \beta_3)$,
and hence
a weight function
$m_{\bullet} : \cO(g) \to \bN^*$
is given by a positive integer $m$.
Then the associated biserial weighted surface algebra
$B(\bT,\vv{T},m_{\bullet})$
is given by the above quiver and the relations:
\begin{align*}
 \alpha_1 \alpha_2 &= 0,
 &
 \beta_1 \beta_2 &= 0,
 &
  (\alpha_1 \beta_2 \alpha_3 \beta_1 \alpha_2 \beta_3)^m
    &=
  (\beta_1 \alpha_2 \beta_3 \alpha_1 \beta_2 \alpha_3)^m
,\\  
 \alpha_2 \alpha_3 &= 0,
 &
 \beta_2 \beta_3 &= 0,
 &
  (\alpha_2 \beta_3 \alpha_1 \beta_2 \alpha_3 \beta_1)^m
    &=
  (\beta_2 \alpha_3 \beta_1 \alpha_2 \beta_3 \alpha_1)^m
,\\  
 \alpha_3 \alpha_1 &= 0,
 &
 \beta_3 \beta_1 &= 0, 
 &
  (\alpha_3 \beta_1 \alpha_2 \beta_3 \alpha_1 \beta_2)^m
    &=
  (\beta_3 \alpha_1 \beta_2 \alpha_3 \beta_1 \alpha_2)^m
.
\end{align*}
The triangulation quiver $(Q(\bT,\vv{T}),f)$ is called 
the \lq Markov quiver\rq{}  
(see \cite{ESk4} for a motivation). 
\end{example}

\section{Proof of Theorem \ref{th:main1}}\label{sec:proof}

To prove the implication (ii) $\Rightarrow$ \ (i), let 
$B$ be a biserial weighted surface algebra. Then by 
 Theorem~\ref{th:3.1} we may assume $B= B(Q, f, m_{\bullet})$ where
$(Q,f)$ is a biserial quiver and $f^3$ is the identity. Then in particular
$B$ is a biserial quiver algebra, and by Theorem \ref{th:2.6}, 
we see that $B$ is a Brauer graph algebra. Now it follows from 
Theorem~\ref{th:2.6}
and Proposition~\ref{prop:2.7}
 that also $eBe$ is a Brauer graph algebra,  and 
(i) holds.

\medskip
We consider the implication (i)$\Rightarrow$ (ii). Assume $A$ is a Brauer
graph algebra, by Theorem \ref{th:2.6} we may assume $A=B(Q, f, m_{\bullet})$ where $(Q, f)$ is a biserial quiver. 
To obtain (ii), we  must find a biserial quiver $(Q^*, f^*)$ with
$(f^*)^3 = 1$ such that $A = e^*B^*e^*$ where $B^* = B(Q^*, f^*, m_{\bullet}^*)$
and $e^*$ an idempotent of $B^*$. 

The following shows that this can be done
in a canonical way, the construction gives an algorithm. Furthermore, applying
the construction twice gives an interesting consequence.

\begin{theorem}
\label{th:4.1} 
Let $B= B(Q, f, m_{\bullet})$ be a 
biserial quiver algebra.
Then there is a  canonically defined
weighted triangulation quiver $(Q^*,f^*,m_{\bullet}^*)$ 
such that the following statements hold.
\begin{enumerate}[(i)]
  \item
    $B$ is isomorphic to the idempotent algebra $e^* B^* e^*$
    of the biserial  triangulation algebra
    $B^* = B(Q^*,f^*,m_{\bullet}^*)$
    with respect to a canonically defined idempotent $e^*$ of $B^*$.    
  \item
    The triangulation quiver $(Q^*, f^*)$
    has no loops fixed by $f^*$.
  \item
    The triangulation quiver $(Q^{**}, f^{**})$
    has no loops and self-folded triangles.
  \item
    $B$ is isomorphic to the idempotent algebra $e^{**} B^{**} e^{**}$
    of the biserial  triangulation algebra
    $B^{**} = B(Q^{**},f^{**},m_{\bullet}^{**})$
    with respect to a canonically defined idempotent $e^{**}$ of $B^{**}$.    
\end{enumerate}
\end{theorem}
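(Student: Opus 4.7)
By Theorem~\ref{th:2.6} and Lemma~\ref{lem:2.4}, a biserial quiver algebra $B = B(Q,f,m_{\bullet})$ is the same data as a Brauer graph algebra, and via Remark~\ref{rem:3.2} the $f$-orbits in $Q_1$ correspond to the faces of the ribbon surface obtained by thickening the associated Brauer graph $\Gamma$. In this picture the condition $(f^*)^3 = \id$ says precisely that every face is a triangle, condition (ii) says that the surface has no boundary components (no fixed loops), and condition (iii) says moreover that the triangulation has no self-folded triangles. My plan is therefore to define the $*$-construction by triangulating the ribbon surface, and to verify each claim in sequence by inspecting the combinatorial effect on the quiver.

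For Step~1, I would define $*$ at the Brauer-graph level as follows: every $f$-orbit of length $\ell \neq 3$ is processed according to $\ell$. For $\ell \geq 4$ the $\ell$-gon face is cut into $\ell - 2$ triangles by adjoining $\ell - 3$ diagonal edges; for $\ell = 2$ a new interior vertex is inserted together with two new edges, producing two triangles; for $\ell = 1$ the corresponding monogon face (bounded by a fixed loop) is filled in by introducing one new vertex and one new edge, producing a self-folded triangle. The weight function $m^*_{\bullet}$ is extended from $m_{\bullet}$ by declaring the newly created $g^*$-orbits to have weight $1$. Taking $e^*$ to be the sum of the primitive idempotents indexed by the original vertices of $Q$, I would apply Proposition~\ref{prop:2.7} to identify $e^* B^* e^*$ as a biserial quiver algebra and verify directly that its combinatorial data return $(Q,f,m_{\bullet})$, yielding (i). Since by construction every $f^*$-orbit has length exactly~$3$, no $f^*$-fixed loops survive, giving (ii).

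For statements (iii) and (iv) I would apply the construction a second time, with an additional prescription that every self-folded triangle of $(Q^*, f^*)$ is re-triangulated: the loop arrow $\alpha : v \to v$ at the self-folded edge of such a triangle is subdivided into a two-arrow path $v \to w \to v$ through a new vertex $w$, after which the triangle becomes a quadrilateral face that is split by a single diagonal into two non-self-folded triangles. After this second pass no edge is a loop, so (iii) holds; statement (iv) follows by combining (i) for $B$ and for $B^*$, together with the transitivity of idempotent subalgebras (an idempotent subalgebra of an idempotent subalgebra is again an idempotent subalgebra of the ambient algebra).

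The delicate point is Step~1: the verification that $e^* B^* e^* \cong B$ at the level of relations, not merely of the Gabriel quiver. Once the underlying quiver of the idempotent subalgebra is matched with $Q$ via the shortest-path description in the proof of Proposition~\ref{prop:2.7}, one must check that for each $g$-orbit $\cO$ of $Q$ the maximal cycle $B^*_{\alpha}$ in $B^*$ restricts, on passage to $e^* B^* e^*$, to the original cycle $B_{\alpha}$ with unchanged weight $m_{\alpha}$, and that the auxiliary $g^*$-orbits introduced by $*$ (all of weight~$1$) do not contribute additional relations. This would be carried out case by case according to $\ell$, using the explicit local form of the triangulation, and the identification $\tilde m_{\cO(\tilde \alpha)} = m_{\cO(\alpha)}$ furnished by Proposition~\ref{prop:2.7}.
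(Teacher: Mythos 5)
Your geometric strategy (re-triangulate each face of the ribbon surface of $\Gamma$) is genuinely different from the paper's construction, which is uniform: every arrow $\alpha$ is subdivided into $\alpha'\alpha''$ through a new vertex $x_{\alpha}$ and a new arrow $\varepsilon_{\alpha}\colon x_{f(\alpha)}\to x_{\alpha}$ is added, with $f^*$-orbits $(\alpha''\ f(\alpha)'\ \varepsilon_{\alpha})$ --- geometrically, a cone over each face from a new interior vertex, i.e.\ the barycentric division of Section~\ref{4.8}, rather than your fan of diagonals. The paper even acknowledges in Remark~\ref{rem:4.3} that economical, face-by-face variants exist. However, your write-up has genuine gaps. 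First, the heart of the theorem is statement (i), and you explicitly defer it: you describe what ``would be carried out case by case according to $\ell$'' but never carry it out, and the case analysis is nontrivial precisely because faces of a ribbon graph routinely have repeated corners and repeated sides, so your diagonals can be loops or multiple edges and the insertion into the cyclic orders must be specified with care. Second, the theorem demands a \emph{canonically defined} $(Q^*,f^*,m^*_{\bullet})$ and idempotent; a fan triangulation of an $\ell$-gon requires choosing a base corner in each face, which is exactly the kind of choice the paper's cone construction is designed to avoid.

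The more serious problem is your treatment of (iii) and (iv). Your first pass does nothing to a quiver all of whose $f$-orbits already have length $3$, so ``applying the construction a second time'' is vacuous, and the real content of your second pass is the new local move resolving self-folded triangles. That move is a \emph{different} construction, so you cannot invoke ``(i) for $B^*$'' and transitivity of idempotent subalgebras: you would have to prove from scratch that resolving a self-folded triangle leaves the idempotent subalgebra at the old vertices unchanged (including the effect on $g$-orbits and weights at the affected vertices of $\Gamma^*$), and your description (``subdivide the loop arrow \ldots quadrilateral face split by a single diagonal'') is too vague to check that it even yields a $2$-regular triangulation quiver. By contrast, in the paper the second pass is literally the same $*$-construction applied to $(Q^*,f^*,m^*_{\bullet})$: since every $f^*$-orbit has length $3$, no arrow of $Q^*$ is fixed by $f^*$, hence no $\varepsilon$-arrow of $Q^{**}$ is a loop and the arrows $\beta',\beta''$ never are; so (iii) is immediate, and (iv) really does follow from (i) applied twice plus transitivity, via $e^{**}=e^*\hat{e}$. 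If you want to salvage your route, you must either make the second pass an instance of a construction for which you have proved (i) in general, or supply an independent proof that it preserves the idempotent algebra.
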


\begin{proof} 
Let  $Q = (Q_0,Q_1,s,t)$,
and let $g$ be the permutation of $Q_1$  associated to $f$.
We define a triangulation quiver $(Q^*, f^*)$ as follows.
We take $Q^* = (Q^*_0,Q^*_1,s^*,t^*)$ with
\begin{align*}
  Q_0^* & := Q_0 \cup \{x_{\alpha}\}_{\alpha \in Q_1} , &
  Q_1^* & := \{\alpha', \alpha'', \varepsilon_{\alpha} \}_{\alpha \in Q_1} 
\end{align*}
and 
$s^*(\alpha') = s(\alpha)$,
$t^*(\alpha') = x_{\alpha}$,
$s^*(\alpha'') = x_{\alpha}$,
$t^*(\alpha'') = t(\alpha)$,
$s^*(\varepsilon_{\alpha}) = x_{f(\alpha)}$,
$t^*(\varepsilon_{\alpha}) = x_{\alpha}$.
Moreover, we set 
$f^*(\alpha'') = f(\alpha)'$,
$f^*(f(\alpha)') = \varepsilon_{\alpha}$,
$f^*(\varepsilon_{\alpha}) = \alpha''$.
We observe that $(Q^*, f^*)$ is 
a triangulation quiver.
Let $g^*$ be the permutation of $Q_1^*$
associated to $f^*$.
We notice that,
for any arrow $\alpha$ of $Q$,
we have
$g^*(\alpha') = \alpha''$,
$g^*(\alpha'') = g(\alpha)'$,
and
$g^*(\varepsilon_{\alpha}) = \varepsilon_{f^{-1}(\alpha)}$.
For each arrow $\beta \in Q_1^*$,
we denote by $\cO^*(\beta)$ the $g^*$-orbit
of $\beta$.
Then the  $g^*$-orbits in $Q_1^*$ are 
\begin{align*}
  \cO^*(\alpha') & = \big(\alpha' \ \alpha'' \ g(\alpha)' \ g(\alpha)'' \ \dots 
                                \ g^{n_{\alpha}-1}(\alpha)' \ g^{n_{\alpha}-1}(\alpha)''\big) , \\
  \cO^*(\varepsilon_{\alpha}) & = \big(\varepsilon_{f^{r_{\alpha}-1}(\alpha)} 
                                \ \varepsilon_{f^{r_{\alpha}-2}(\alpha)} \ \dots 
                                \ \varepsilon_{f(\alpha)} \ \varepsilon_{\alpha}\big) , 
\end{align*}
for $\alpha \in Q_1$, where
$n_{\alpha}$ is the length of the $g$-orbit of $\alpha$
and
$r_{\alpha}$ is the length of the $f$-orbit of $\alpha$ 
in $Q_1$.
We define the weight function 
${m}_{\bullet}^*$
by 
${m}_{\cO^*(\alpha')}^* = m_{\alpha}$ 
and
${m}_{\cO^*(\varepsilon_{\alpha})}^* = 1$
for all $\alpha \in Q_1$. 

Let $B^* = B(Q^*,f^*,m_{\bullet}^*)$
be the biserial triangulation algebra
associated to $(Q^*,f^*,m_{\bullet}^*)$
and  let $e^*$ be the sum of the 
primitive idempotents
$e_i^*$ in $B^*$ associated to all vertices $i \in Q_0$.   
Using the proof of  Proposition~\ref{prop:2.7} 
we see directly that the idempotent algebra $e^*B^*e^*$ is isomorphic
to $B$. 
It follows also from the definition of $f^*$ that $Q^*$
has no loops fixed by $f^*$, and (ii) holds.
In particular, we conclude that 
$f^*(\varepsilon_{\alpha}) \neq \varepsilon_{\alpha}$
for any arrow $\alpha \in Q_1$.
Hence, the triangulation quiver $(Q^{**}, f^{**})$ has no loops,
and consequently it has also no self-folded triangles, and (iii) follows.
Finally, by (i), $B^*$ is isomorphic to an idempotent algebra 
$\hat{e}B^{**}\hat{e}$ of $B^{**} = B(Q^{**},f^{**},m_{\bullet}^{**})$
for the corresponding idempotent $\hat{e}$ of $B^{**}$.    
Taking $e^{**} = e^*\hat{e}$, we obtain that 
$B$ is isomorphic to the idempotent algebra $e^{**} B^{**} e^{**}$,
and hence (iv) also holds.
\end{proof} 

We give some illustrations for the $*$-construction.

\smallskip

(1) 
A  loop $\alpha$ in $Q$ fixed by $f$ is replaced
in $Q^*$ by the subquiver
\[
  \xymatrix{ 
     x_{\alpha} \ar@(dl,ul)[]^{\varepsilon_{\alpha}} \ar@/^1.5ex/[r]^{\alpha''}
       & s(\alpha) \ar@/^1.5ex/[l]^{\alpha'}
  }
\]
with the  $f^*$-orbit
$(\alpha' \ \varepsilon_{\alpha} \ \alpha'')$.

\smallskip

(2) 
A subquiver of $Q$ of the form
\[
  \xymatrix{ 
     a \ar@(dl,ul)[]^{\alpha} \ar@/^1.5ex/[r]^{\beta} & b \ar@/^1.5ex/[l]^{\gamma}
  }
\]
where
$(\alpha \ \beta \ \gamma)$ is an $f$-orbit,
is replaced in $Q^*$ by the quiver
\[
  \xymatrix{ 
     && x_{\beta} \ar[rd]^{\beta''} \ar@/_4ex/[lld]_{\varepsilon_{\alpha}} \\
     \!\! x_{\alpha} \ar@/^1.5ex/[r]^{\alpha''}  \ar@/_4ex/[rrd]_{\varepsilon_{\gamma}}
       & a \ar@/^1.5ex/[l]^{\alpha'} \ar[ru]^{\beta'} && b \ar[ld]^{\gamma'} \\
     && x_{\gamma} \ar[lu]^{\gamma''} \ar[uu]^{\varepsilon_{\beta}}      
  }
\]
with $f^*$-orbits
$(\alpha'' \ \beta' \ \varepsilon_{\alpha})$,
$(\gamma'' \ \alpha' \ \varepsilon_{\gamma})$ and $(\beta'' \ \gamma' \ \varepsilon_{\beta})$.

\smallskip

(3) 
A  subquiver of $Q$ of the form
\[
  \xymatrix@C=1.2pc{ 
     a \ar[rr]^{\alpha} && b \ar[ld]^{\beta} \\ & c \ar[lu]^{\gamma}
  }
\]
and where
$(\alpha \ \beta \ \gamma)$ is an $f$-orbit,
is replaced in $Q^*$ by the quiver of the form
\[
  \xymatrix@C=1.2pc@!=1pc{ 
     a \ar[rr]^{\alpha'}
         && x_{\alpha} \ar[rr]^{\alpha''} \ar[ld]^{\varepsilon_{\gamma}}
         && b \ar[ld]^{\beta'} \\
     & x_{\gamma} \ar[rr]^{\varepsilon_{\beta}} \ar[lu]^{\gamma''} 
          && x_{\beta} \ar[ld]^{\beta''} \ar[lu]^{\varepsilon_{\alpha}} \\ 
     && c \ar[lu]^{\gamma'}
  }
\]
with $f^*$-orbits
$(\alpha'' \ \beta' \ \varepsilon_{\alpha})$,
$(\beta'' \ \gamma' \ \varepsilon_{\beta})$,
and $(\gamma'' \ \alpha' \ \varepsilon_{\gamma})$.

\begin{remark}
The statement (i) of the above theorem also holds 
if we replace the canonically defined weight function $m_{\bullet}^*$
by a weight function
$\bar{m}_{\bullet}^*$ 
such that 
$\bar{m}_{\cO^*(\alpha')} = m_{\alpha}$ 
and
$\bar{m}_{\cO^*(\varepsilon_{\alpha})}$
is an arbitrary positive integer, 
for any arrow $\alpha \in Q_1$. 
\end{remark}

\begin{remark}
\label{rem:4.3}
The construction of 
the triangulation quiver $(Q^*, f^*)$ 
associated to $(Q,f)$ is canonical, though a quiver with
fewer vertices may often be  sufficient.
In fact, it would be enough to apply the construction  only to  the arrows
in $f$-orbits  of length different from $1$ and $3$.
An
 algebra 
$B(Q,f,m_{\bullet})$
may have  many presentations as an idempotent algebra
of some biserial  triangulation algebra,
even for a triangulation quiver $(Q', f')$
with fewer   $f'$-orbits  than 
the number of $f^*$-orbits in the
triangulation quiver $(Q^*, f^*)$
(see Example~\ref{ex:4.7}).
\end{remark}

\begin{remark}
\label{rem:4.4}
The $*$-construction described in Theorem~\ref{th:4.1} 
provides a special class of triangulation quivers.
Namely, let $(Q,f)$ be a biserial quiver,
$g$ the permutation of $Q_1$ associated to $(Q,f)$,
and
$g^*$ the permutation of $Q_1^*$ associated to $(Q^*,f^*)$.
Then, for every arrow $\alpha \in Q_1$, we have in $Q_1^*$
the $g^*$-orbit $\cO^*(\alpha')$ of even length
$2|\cO(\alpha)|$ and
the $g^*$-orbit $\cO^*(\varepsilon_{\alpha})$ whose length
is the length of the $f$-orbit of $\alpha$ in $Q_1$.
In particular, all triangulation quivers $(Q',f')$
having only $g'$-orbits of odd length do not belong 
to this class of triangulation quivers.
For example, it is the case for the tetrahedral 
quiver considered in Section~\ref{sec:proof4}.
We refer also to \cite[Example~4.9]{ESk3} for an
example of triangulation quiver $(Q'',f'')$ for
which all arrows in $Q_1''$ belong to one 
$g''$-orbit of length $18$.
\end{remark}

\begin{example}
\label{ex:4.4}
Let $\Gamma$ be the Brauer tree
\[
 \xymatrix@R=3pc@C=3pc{
        \bullet \ar@{-}[r]^{1}
        \save[] +<-3mm,0mm> *{a} \restore 
        &
        \bullet
        \save[] +<3mm,0mm> *{b} \restore \\
  }
\]
with  multiplicity function 
$e(a) = m$ and $e(b) = n$.
Then the associated Brauer graph  algebra $B_{\Gamma}$
is the algebra
$B(Q,f,m_{\bullet})$
associated to the  biserial quiver 
$(Q,f,m_{\bullet})$
where $Q$ is of the form
\[
 \xymatrix{
  1 \ar@(dl,ul)[]^{\alpha} \ar@(ur,dr)[]^{\beta}
 }
\]
with
$f(\alpha) = \beta$,
$f(\beta) = \alpha$,
$g(\alpha) = \alpha$,
$g(\beta) = \beta$,
and
$m_{\cO(\alpha)} = m$, and $m_{\cO(\beta)} = n$.
If  $m = 1$, then
$B_{\Gamma}$ is the truncated polynomial algebra
$K[x]/(x^{n+1})$.
The associated triangulation quiver $(Q^*,f^*)$
is of the form
\[
  \xymatrix@R=3.pc@C=1.8pc{
    &1
    \ar@<-.45ex>[ld]_{\alpha'}
    \ar@<.45ex>[rd]^{\beta'}
    \\
    x_{\alpha} 
    \ar@<-.45ex>[ru]_{\alpha''}
    \ar@<-.45ex>[rr]_{\varepsilon_{\beta}}
    && x_{\beta} 
    \ar@<-.45ex>[ll]_{\varepsilon_{\alpha}}
    \ar@<.45ex>[lu]^{\beta''}
  }
\]
and the $f^*$-orbits are
$(\alpha'' \ \beta' \ \varepsilon_{\alpha})$
and
$(\beta'' \ \alpha' \ \varepsilon_{\beta})$.
Further,   the $g^*$-orbits are
$\cO^*(\alpha') = (\alpha' \ \alpha'' )$,
$\cO^*(\beta') = (\beta' \ \beta'')$,
$\cO^*(\varepsilon_{\alpha}) = (\varepsilon_{\alpha}\ \varepsilon_{\beta})$
and the weight function is
${m}^*_{\cO^*(\alpha')} = m$, 
${m}^*_{\cO^*(\beta')} = n$, 
${m}^*_{\cO^*(\varepsilon_{\alpha})} = 1$.
We also note that 
$(Q^*, f^*)$ is the triangulation quiver 
$(Q(\bT,\vv{T}), f)$ associated to the torus $\bT$
with  triangulation $T$ and orientation $\vv{T}$
of triangles in $T$ as follows
\[
\begin{tikzpicture}[auto]
\coordinate (o) at (0,0);
\coordinate (a) at (0,1.5);
\coordinate (b) at (-1.5,0);
\coordinate (c) at (1.5,0);
\coordinate (d) at (0,-1.5);
\coordinate (e) at (-.08,.25);
\coordinate (f) at (.08,-.25);
\coordinate (f2) at (-.08,-.25);
\draw (a) to node {2} (c)
(c) to node {1} (d)
(d) to node {2} (b)
(b) to node {1} (a);
\draw (b) to (c);
\draw (b) to node {3} (o);
\draw[->] (e) arc (260:-80:.4);
\draw[->] (f2) arc (-260:80:.4);
\node (a) at (a) {$\bullet$};
\node (b) at (b) {$\bullet$};
\node (c) at (c) {$\bullet$};
\node (d) at (d) {$\bullet$};
\end{tikzpicture}
\]
(compare with  Example~\ref{ex:3.5}).
\end{example}

\begin{example}
\label{ex:4.5}
Let $\Gamma$ be the Brauer graph
\[
\begin{tikzpicture}
[-,scale=.7]

\node (a) at (0,0) {$\bullet$};

\draw(0,2.04) node[above]{\footnotesize$1$} arc (90:260:1);
\draw(0,2.04) arc (90:-80:1);
\draw(a) node[below]{$a$};
\end{tikzpicture}
\]
with multiplicity
$e(a) = m$ for some $m \in \bN^*$.
Then the associated Brauer graph  algebra $B_{\Gamma}$
is the  algebra
$B(Q,f,m_{\bullet})$
where the quiver $Q$ is of the form
\[
 \xymatrix{
  1 \ar@(dl,ul)[]^{\alpha} \ar@(ur,dr)[]^{\beta}
 }
\]
with
$f(\alpha) = \alpha$,
$f(\beta) = \beta$,
$g(\alpha) = \beta$,
$g(\beta) = \alpha$,
and
$m_{\cO(\alpha)} = m$.
The associated triangulation quiver $(Q^*,f^*)$
is 
\[
  \xymatrix{ 
     x_{\alpha} \ar@(dl,ul)[]^{\varepsilon_{\alpha}} \ar@/^1.5ex/[r]^{\alpha''}
       & 1 \ar@/^1.5ex/[l]^{\alpha'} \ar@/^1.5ex/[r]^{\beta'}
       & \ar@/^1.5ex/[l]^{\beta''}
       x_{\beta} \ar@(ur,dr)[]^{\varepsilon_{\beta}} 
  }
\]
with $f^*$-orbits
$(\alpha'' \ \alpha' \ \varepsilon_{\alpha})$
and
$(\beta'' \ \beta' \ \varepsilon_{\beta})$.
Further,  the $g^*$-orbits are
$\cO^*(\alpha') = (\alpha' \ \alpha'' \ \beta' \ \beta'')$,
$\cO^*(\varepsilon_{\alpha}) = (\varepsilon_{\alpha})$,
$\cO^*(\varepsilon_{\beta}) = (\varepsilon_{\beta})$,
and
${m}^*_{\cO^*(\alpha')} = m$, 
${m}^*_{\cO^*(\varepsilon_{\alpha})} = 1$,
${m}^*_{\cO^*(\varepsilon_{\beta})} = 1$.
Note that 
$(Q^*, f^*)$ is the triangulation quiver 
$(Q(\bS,\vv{T}), f)$ associated to the sphere $\bS$
with  triangulation $T$ 
given by two self-folded triangles
\[
\begin{tikzpicture}[auto]
\coordinate (c) at (0,0);
\coordinate (a) at (1,0);
\coordinate (b) at (-1,0);
\coordinate (d) at (2,0);
\draw (c) to node {$2$} (a);
\draw (a) to node {$3$} (d);
\draw (b) arc (-180:180:1) node [left] {$1$};
\node (a) at (1,0) {$\bullet$};
\node (c) at (0,0) {$\bullet$};
\node (c) at (2,0) {$\bullet$};
\end{tikzpicture}
\]
where $\vv{T}$ is canonically defined.
\end{example}

\begin{example}
\label{ex:4.7}
Let $(Q,f,m_{\bullet})$
be the  weighted biserial quiver 
considered in Example~\ref{ex:2.8}.
Then the  triangulation quiver $(Q^*,f^*)$
is of the form
\[
\begin{tikzpicture}
[->,scale=1.5]

\coordinate (1) at (5,-1);
\coordinate (1-left) at (4.88,-1);
\coordinate (1-up) at (4.88,-.94);
\coordinate (2) at (3,0.7);
\coordinate (3) at (4,0.5);
\coordinate (4) at (2,0);
\coordinate (5) at (0,1);
\coordinate (6) at (0,-1);
\coordinate (7) at (0,-2);
\coordinate (8) at (0,-3);

\coordinate (a) at (0,-3.5);
\coordinate (d) at (4,-0.5);
\coordinate (p) at (0.5,0);

\coordinate (alpha) at (5,1);
\coordinate (alpha-left) at (4.88,1);
\coordinate (alpha-down) at (4.88,.94);
\coordinate (beta) at (3,-0.7);
\coordinate (gamma) at (3.5,0);
\coordinate (delta) at (2,-1);
\coordinate (rho) at (1,1);
\coordinate (sigma) at (4.5,0);
\coordinate (omega) at (2,1);

\coordinate (nu) at (-0.5,0);5
\coordinate (mu) at (1,-1);
\coordinate (xi) at (-0.5,-1.5);
\coordinate (eta) at (0.5,-1.5);
\coordinate (psi) at (-0.75,-2.5);
\coordinate (phi) at (0.75,-2.5);

\coordinate (3l) at (3.94,0.44) ;
\coordinate (3r) at (4.06,0.44) ;
\coordinate (dl) at (3.94,-0.44) ;
\coordinate (dr) at (4.06,-0.44) ;

\coordinate (xi-u) at (-0.44,-1.44) ;
\coordinate (xi-d) at (-0.44,-1.56) ;
\coordinate (eta-u) at (0.44,-1.44) ;
\coordinate (eta-d) at (0.44,-1.56) ;

\coordinate (8ul) at (-0.06,-2.94) ;
\coordinate (8l) at (-0.06,-3.06) ;
\coordinate (al) at (-0.06,-3.44) ;
\coordinate (all) at (-0.06,-3.44) ;

\coordinate (8ur) at (0.06,-2.94) ;
\coordinate (8r) at (0.06,-3.06) ;
\coordinate (ar) at (0.06,-3.44) ;
\coordinate (arr) at (0.06,-3.44) ;

\coordinate (5l) at (-.04,0.92) ;
\coordinate (5r) at (0.08,1) ;
\coordinate (pl) at (0.42,0) ;
\coordinate (pr) at (0.54,0.08) ;

\fill[rounded corners=3mm,fill=gray!20] (1) -- (alpha) -- (sigma) -- cycle;
\fill[fill=gray!20] (1-left) -- (1-up) -- (beta) -- (delta) -- cycle;
\fill[rounded corners=3mm,fill=gray!20] (2) -- (gamma) -- (beta) -- cycle;
\fill[fill=gray!20] (2) -- (omega) -- (alpha-left) -- (alpha-down) -- cycle;
\node [fill=white,circle,minimum size=1.5] at (1) { };
\node [fill=white,circle,minimum size=1.5] at (2) { };
\node [fill=white,circle,minimum size=1.5] at (alpha) { };
\node [fill=white,circle,minimum size=1.5] at (beta) { };
\node [fill=white,circle,minimum size=1.5] at (omega) { };
\node [fill=white,circle,minimum size=1.5] at (delta) { };

\fill[rounded corners=3mm,fill=gray!20] (4) -- (rho) -- (omega) -- cycle;
\fill[rounded corners=3mm,fill=gray!20] (4) -- (delta) -- (mu) -- cycle;
\fill[rounded corners=3mm,fill=gray!20] (6) -- (mu) -- (nu) -- cycle;
\fill[rounded corners=3mm,fill=gray!20] (7) -- (psi) -- (phi) -- cycle;

\fill[rounded corners=3mm,fill=gray!20] (3r) -- (dr) -- (sigma) -- cycle;
\fill[rounded corners=3mm,fill=gray!20] (3l) -- (dl) -- (gamma) -- cycle;

\fill[rounded corners=3mm,fill=gray!20] (6) -- (eta-u) -- (xi-u) -- cycle;
\fill[rounded corners=3mm,fill=gray!20] (7) -- (xi-d) -- (eta-d) -- cycle;

\fill[rounded corners=3mm,fill=gray!20] (psi) -- (8ul) -- (8l) -- (al) -- (all) -- cycle;
\fill[rounded corners=3mm,fill=gray!20] (phi) -- (8ur) -- (8r) -- (ar) -- (arr) -- cycle;

\fill[rounded corners=3mm,fill=gray!20] (5r) -- (pr) -- (rho) -- cycle;
\fill[rounded corners=3mm,fill=gray!20] (5l) -- (nu) -- (pl) -- cycle;

\node (1) at (5,-1) {$\bullet$};
\node (1-up) at (4.95,-.95) { };
\node (2) at (3,0.7) {$\bullet$};
\node (3) at (4,0.5) {$\bullet$};
\node (4) at (2,0) {$\bullet$};
\node (5) at (0,1) {$\bullet$};
\node (6) at (0,-1) {$\bullet$};
\node (7) at (0,-2) {$\bullet$};
\node (8) at (0,-3) {$\bullet$};

\node (a) at (0,-3.5) {$\bullet$};
\node (d) at (4,-0.5) {$\bullet$};
\node (p) at (0.5,0) {$\bullet$};

\node (alpha) at (5,1) {$\bullet$};

\node (alpha-down) at (4.95,.95) { };
\node (beta) at (3,-0.7) {$\bullet$};
\node (gamma) at (3.5,0) {$\bullet$};
\node (delta) at (2,-1) {$\bullet$};
\node (rho) at (1,1) {$\bullet$};
\node (sigma) at (4.5,0) {$\bullet$};
\node (omega) at (2,1) {$\bullet$};

\node (nu) at (-0.5,0) {$\bullet$};
\node (mu) at (1,-1) {$\bullet$};
\node (xi) at (-0.5,-1.5) {$\bullet$};
\node (eta) at (0.5,-1.5) {$\bullet$};
\node (psi) at (-0.75,-2.5) {$\bullet$};
\node (phi) at (0.75,-2.5) {$\bullet$};

\node (3l) at (3.94,0.45) { };
\node (3r) at (4.06,0.45) { };
\node (dl) at (3.94,-0.45) { };
\node (dr) at (4.06,-0.45) { };

\node (xi-u) at (-0.45,-1.44) { };
\node (xi-d) at (-0.45,-1.56) { };
\node (eta-u) at (0.45,-1.44) { };
\node (eta-d) at (0.45,-1.56) { };

\node (8ul) at (-0.06,-2.94) { };
\node (8l) at (-0.06,-3.03) { };
\node (al) at (-0.06,-3.44) { };
\node (all) at (-0.06,-3.44) { };

\node (8ur) at (0.06,-2.94) { };
\node (8r) at (0.06,-3.03) { };
\node (ar) at (0.06,-3.44) { };
\node (arr) at (0.06,-3.44) { };

\node (5l) at (-.04,0.92) { };
\node (5r) at (0.08,1) { };
\node (pl) at (0.42,0) { };
\node (pr) at (0.54,0.08) { };

\draw (1) node[below right]{$1$};
\draw (2) node[below left]{$2$};
\draw (3) node[above]{$3$};
\draw (4) node[right]{$4$};
\draw (5) node[above left]{$5$};
\draw (6) node[left]{$6$};
\draw (7) node[left]{$7$};
\draw (8) node[above]{$8$};

\draw (a) node[below]{$x_a$};
\draw (d) node[below]{$x_d$};
\draw (p) node[below right]{$x_p$};

\draw (alpha) node[above right]{$x_{\alpha}$};
\draw (beta) node[above left]{$x_{\beta}$};
\draw (gamma) node[left]{$x_{\gamma}$};
\draw (delta) node[below]{$x_{\delta}$};
\draw (rho) node[above]{$x_{\varrho}$};
\draw (sigma) node[right]{$x_{\sigma}$};
\draw (omega) node[above]{$x_{\omega}$};

\draw (nu) node[left]{$x_{\nu}$};
\draw (mu) node[below]{$x_{\mu}$};
\draw (xi) node[left]{$x_{\xi}$};
\draw (eta) node[right]{$x_{\eta}$};
\draw (psi) node[left]{$x_{\psi}$};
\draw (phi) node[right]{$x_{\varphi}$};

\draw
(1) edge node[right]{\footnotesize$\alpha'$} (alpha)
(alpha) edge node[left]{\footnotesize$\varepsilon_{\sigma}$} (sigma)
(sigma) edge node[left]{\footnotesize{$\sigma''$}} (1)
(1-up) edge node[above left]{\footnotesize$\!\!\!\beta'\ \ $} (beta)
(beta) edge node[above]{\footnotesize$\varepsilon_{\delta}$} (delta)
(delta) edge node[below]{\footnotesize{$\delta''$}} (1)
(2) edge node[right]{\footnotesize$\gamma'$} (gamma)
(gamma) edge node[right]{\footnotesize$\varepsilon_{\beta}$} (beta)
(beta) edge node[left]{\footnotesize{$\beta''$}} (2)
(2) edge node[below]{\footnotesize$\omega'$} (omega)
(omega) edge node[above]{\footnotesize$\varepsilon_{\alpha}$} (alpha)
(alpha-down) edge node[below left]{\footnotesize{$\!\!\!\alpha''\ \ $}} (2)
(4) edge node[below left]{\footnotesize$\varrho'$} (rho)
(rho) edge node[above]{\footnotesize$\varepsilon_{\omega}$} (omega)
(omega) edge node[right]{\footnotesize{$\omega''$}} (4)
(4) edge node[right]{\footnotesize$\delta'$} (delta)
(delta) edge node[below]{\footnotesize$\varepsilon_{\mu}$} (mu)
(mu) edge node[above left]{\footnotesize{$\mu''$}} (4)
(6) edge node[above left]{\footnotesize$\mu'$} (mu)
(mu) edge node[above right]{\footnotesize$\varepsilon_{\nu}$} (nu)
(nu) edge node[left]{\footnotesize{$\nu''$}} (6)
(7) edge node[above left]{\footnotesize$\psi'$} (psi)
(psi) edge node[below]{\footnotesize$\varepsilon_{\varphi}$} (phi)
(phi) edge node[above right]{\footnotesize{$\varphi''$}} (7)
;
\draw
(3l) edge node[left]{\footnotesize$d'$} (dl)
(dl) edge node[below]{\footnotesize$\varepsilon_{\gamma}$} (gamma)
(gamma) edge node[above]{\footnotesize{$\gamma''$}} (3l)
(3r) edge node[above]{\footnotesize$\sigma'$} (sigma)
(sigma) edge node[below]{\footnotesize$\ \varepsilon_{d}\!\!$} (dr)
(dr) edge node[right]{\footnotesize{$d''$}} (3r)
;
\draw
(6) edge node[right]{\footnotesize$\ \eta'$} (eta-u)
(eta-u) edge node[above]{\footnotesize$\varepsilon_{\xi}$} (xi-u)
(xi-u) edge node[left]{\footnotesize{$\xi''$}} (6)
(7) edge node[left]{\footnotesize$\xi'\ $} (xi-d)
(xi-d) edge node[below]{\footnotesize$\varepsilon_{\eta}$} (eta-d)
(eta-d) edge node[right]{\footnotesize{$\eta''$}} (7)
;
\draw
(8) edge node[above]{\footnotesize$\varphi'$} (phi)
(phi) edge node[below right]{\footnotesize$\varepsilon_{a}$} (arr)
(ar) edge node[above right]{\footnotesize{$\!a''$}} (8r)
(8l) edge node[above left]{\footnotesize$\ \ a'\!\!\!$} (al)
(all) edge node[below left]{\footnotesize$\varepsilon_{\psi}$} (psi)
(psi) edge node[above]{\footnotesize{$\psi''$}} (8)
;
\draw
(5r) edge node[above right]{\footnotesize$\!\!p'$} (pr)
(pr) edge node[below right]{\footnotesize$\varepsilon_{\varrho}$} (rho)
(rho) edge node[above]{\footnotesize{$\varrho''$}} (5r)
(5l) edge node[above left]{\footnotesize$\nu'$} (nu)
(nu) edge node[below]{\footnotesize$\varepsilon_{p}$} (pl)
(pl) edge node[below left]{\footnotesize{$p''\!\!$}} (5l)
;
\end{tikzpicture}
\]
where the shaded triangles describe the $f^*$-orbits in $Q_1^*$.
Then we have the following $g^*$-orbits in $Q_1^*$:
\begin{align*}
  \cO^*(\alpha') & = (\alpha' \ \alpha''
                                \ \gamma' \ \gamma''
                                \ \sigma' \ \sigma''
                                \ \beta' \ \beta''
                                \ \omega' \ \omega''
                                \ \delta' \ \delta''
                                ) , 
 &
  \cO^*(d') & = (d' \ d'' ) , 
 \\
  \cO^*(\mu') & = (\mu' \ \mu''
                                \ \varrho' \ \varrho''
                                \ \nu' \ \nu''
                                \ \eta' \ \eta''
                                \ \psi' \ \psi''
                                \ \varphi' \ \varphi''
                                \ \xi' \ \xi''
                                ) , 
 &
  \cO^*(p') & = (p' \ p'' ) , 
 \\
  \cO^*(\varepsilon_{\alpha}) & = (\varepsilon_{\alpha}
                                \ \varepsilon_{\sigma} 
                                \ \varepsilon_{d} 
                                \ \varepsilon_{\gamma} 
                                \ \varepsilon_{\beta} 
                                \ \varepsilon_{\delta} 
                                \ \varepsilon_{\mu} 
                                \ \varepsilon_{\nu} 
                                \ \varepsilon_{p} 
                                \ \varepsilon_{\varrho} 
                                \ \varepsilon_{\omega} 
                                ) ,
 &
  \cO^*(a') & = (a' \ a'' ) , 
 \\
    \cO^*(\varepsilon_{a}) & = (\varepsilon_{a}
                                \ \varepsilon_{\psi} 
                                \ \varepsilon_{\varphi} 
                                ) ,
 &
    \cO^*(\varepsilon_{\eta}) & = (\varepsilon_{\eta}
                                \ \varepsilon_{\xi} 
                                )  . 
\end{align*}
Moreover, the weight function $m_{\bullet}^* : \cO(g^*) \to \bN^*$
is given by
\begin{align*}
 m_{\cO^*(d')}^* &= m_{\cO(d)} = e(d),
&
 m_{\cO^*(\alpha')}^* &= m_{\cO(\alpha)} = e(c),
&
 m_{\cO^*(\varepsilon_{\alpha})}^* &= 1,
\\
 m_{\cO^*(p')}^* &= m_{\cO(p)} = e(p),
&
 m_{\cO^*(\mu')}^* &= m_{\cO(\mu)} = e(b),
&
 m_{\cO^*(\varepsilon_{\eta})}^* &= 1,
\\
&
&
 m_{\cO^*(a')}^* &= m_{\cO(a)} = e(a).
&
\end{align*}
Finally,
$e^* = e_1^* + e_2^* + e_3^* + e_4^* + e_5^* + e_6^* + e_7^* + e_8^*$. 
We note that $(Q^*,f^*)$ has $16$ $f^*$-orbits, all of length three.

The  Brauer graph algebra
$B_{\Gamma} = B(Q,f,m_{\bullet})$
is also isomorphic to the  idempotent algebra
$e' B' e'$ of a biserial triangulation 
algebra $B' = B(Q',f',m'_{\bullet})$
for  the triangulation quiver $(Q', f')$ shown 
below
\[
\begin{tikzpicture}
[->,scale=.6]

\coordinate (1) at (0,0) ;
\coordinate (1u) at (0.15,0.15) ;
\coordinate (1d) at (0.15,-0.15) ;
\coordinate (2) at (3,0) ;
\coordinate (2u) at (2.85,0.15) ;
\coordinate (2d) at (2.85,-0.15) ;

\coordinate (3) at (1,-3) ;
\coordinate (4) at (0,0) ;
\coordinate (5) at (-6,1.5) ;
\coordinate (6) at (-4.5,-1.5) ;
\coordinate (7) at (-7.5,-1.5) ;
\coordinate (8) at (-9,-1.5) ;

\coordinate (a1) at (-1.5,1.5) ;
\coordinate (b1) at (1.5,1.5) ;
\coordinate (c1) at (3,0) ;
\coordinate (d1) at (6,0) ;
\coordinate (6') at (-4.5,1.5) ;
\coordinate (e) at (-3,0) ;
\coordinate (f) at (6,0) ;
\coordinate (a2) at (-1.5,-1.5) ;
\coordinate (b2) at (1.5,-1.5) ;
\coordinate (c2) at (3,-3) ;
\coordinate (d2) at (6,-3) ;
\coordinate (g) at (4.5,-1.5) ;
\coordinate (u) at (-6,0) ;
\coordinate (ul) at (-6.15,-0.15) ;
\coordinate (ur) at (-5.85,-0.15) ;
\coordinate (d) at (-6,-3) ;
\coordinate (dl) at (-6.15,-2.85) ;
\coordinate (dr) at (-5.85,-2.85) ;

\fill[fill=gray!20] 
   (6,-1.35) -- (7.8,-1.35) -- (6.2,-.15) -- (6,-.25) -- cycle;
\fill[fill=gray!20] 
   (6,-1.65) -- (7.8,-1.65) -- (6.2,-2.85) -- (6,-2.75) -- cycle;

\fill[rounded corners=3mm,fill=gray!20] (e) -- (6') -- (6) -- cycle;
\fill[rounded corners=3mm,fill=gray!20] (a1) -- (b1) -- (4) -- cycle;

\fill[rounded corners=3mm,fill=gray!20] (e) -- (a2) -- (a1) -- cycle;
\fill[rounded corners=3mm,fill=gray!20] (a2) -- (b2) -- (4) -- cycle;

\fill[rounded corners=3mm,fill=gray!20] (7) -- (ul) -- (dl) -- cycle;
\fill[rounded corners=3mm,fill=gray!20] (dr) -- (ur) -- (6) -- cycle;

\fill[rounded corners=3mm,fill=gray!20] (b1) -- (b2) -- (c1) -- cycle;

\fill[rounded corners=3mm,fill=gray!20] (g) -- (c1) -- (d1) -- cycle;
\fill[rounded corners=3mm,fill=gray!20] (g) -- (c2) -- (d2) -- cycle;

\fill[fill=gray!20] (-5.75,1.7) arc (135:45:0.7) -- (-4.75,1.3) arc (315:225:0.7) -- cycle;
\fill[fill=gray!20] (-8.75,-1.3) arc (135:45:0.7) -- (-7.75,-1.7) arc (315:225:0.7) -- cycle;
\fill[fill=gray!20] (1.75,-2.7) arc (135:45:0.7) -- (2.75,-3.2) arc (315:225:0.7) -- cycle;

\node [circle,minimum size=1.5](A) at (-6,1.5) { };
\node [circle,minimum size=1cm](B) at (-6.9,1.5) {};
\coordinate  (C) at (intersection 2 of A and B);
\coordinate  (D) at (intersection 1 of A and B);
 \tikzAngleOfLine(B)(D){\AngleStart}
 \tikzAngleOfLine(B)(C){\AngleEnd}
\fill[gray!20]%
   let \p1 = ($ (B) - (D) $), \n2 = {veclen(\x1,\y1)}
   in   
     (D) arc (\AngleStart:\AngleEnd:\n2); 
\node [fill=white,circle,minimum size=1.5](A) at (-6,1.5) { };
\draw[<-]%
   let \p1 = ($ (B) - (D) $), \n2 = {veclen(\x1,\y1)}
   in   
     (B) ++(60:\n2) node[left]{\footnotesize\ \ \ \raisebox{4ex}{$p$}}
     (D) arc (\AngleStart:\AngleEnd:\n2); 
\node [circle,minimum size=1.5](A) at (-9,-1.5) { };
\node [circle,minimum size=1cm](B) at (-9.9,-1.5) {};
\coordinate  (C) at (intersection 2 of A and B);
\coordinate  (D) at (intersection 1 of A and B);
 \tikzAngleOfLine(B)(D){\AngleStart}
 \tikzAngleOfLine(B)(C){\AngleEnd}
\fill[gray!20]%
   let \p1 = ($ (B) - (D) $), \n2 = {veclen(\x1,\y1)}
   in   
     (D) arc (\AngleStart:\AngleEnd:\n2); 
\node [fill=white,circle,minimum size=1.5](A) at (-9,-1.5) { };
\draw[<-]%
   let \p1 = ($ (B) - (D) $), \n2 = {veclen(\x1,\y1)}
   in   
     (B) ++(60:\n2) node[left]{\footnotesize\ \ \ \raisebox{4ex}{$a$}}
     (D) arc (\AngleStart:\AngleEnd:\n2); 

\node [circle,minimum size=1.5](A) at (1.5,-3) { };
\node [circle,minimum size=1cm](B) at (.6,-3) {};
\coordinate  (C) at (intersection 2 of A and B);
\coordinate  (D) at (intersection 1 of A and B);
 \tikzAngleOfLine(B)(D){\AngleStart}
 \tikzAngleOfLine(B)(C){\AngleEnd}
\fill[gray!20]%
   let \p1 = ($ (B) - (D) $), \n2 = {veclen(\x1,\y1)}
   in   
     (D) arc (\AngleStart:\AngleEnd:\n2); 
\node [fill=white,circle,minimum size=1.5](A) at (1.5,-3) { };
\draw[<-]%
   let \p1 = ($ (B) - (D) $), \n2 = {veclen(\x1,\y1)}
   in   
     (B) node[left]{\footnotesize\raisebox{0ex}{\!\!\!\!\!$d$\ \ \quad}} ++(60:\n2)
     (D) arc (\AngleStart:\AngleEnd:\n2); 

\node (1) at (6,-1.5) {1};
\node (1u) at (6.,-1.35) { };
\node (1d) at (6.,-1.65) { };
\node (2) at (8,-1.5) {2};
\node (2u) at (7.85,-1.35) { };
\node (2d) at (7.85,-1.65) { };

\node (3) at (1.5,-3) {3};
\node (4) at (0,0) {4};
\node (5) at (-6,1.5) {5};
\node (6) at (-4.5,-1.5) {6};
\node (7) at (-7.5,-1.5) {7};
\node (8) at (-9,-1.5) {8};

\node (a1) at (-1.5,1.5) {$\bullet$};
\node (b1) at (1.5,1.5) {$\bullet$};
\node (c1) at (3,0) {$\bullet$};
\node (d1) at (6,0) {$\bullet$};
\node (6') at (-4.5,1.5) {$\bullet$};
\node (e) at (-3,0) {$\bullet$};
\node (f) at (6,0) {$\bullet$};
\node (a2) at (-1.5,-1.5) {$\bullet$};
\node (b2) at (1.5,-1.5) {$\bullet$};
\node (c2) at (3,-3) {$\bullet$};
\node (d2) at (6,-3) {$\bullet$};
\node (g) at (4.5,-1.5) {$\bullet$};

\node (u) at (-6,0) {$\bullet$};
\node (ul) at (-6.15,-0.15) { };
\node (ur) at (-5.85,-0.15) { };
\node (d) at (-6,-3) {$\bullet$};
\node (dl) at (-6.15,-2.85) { };
\node (dr) at (-5.85,-2.85) { };

\draw (-5.75,1.7) arc (135:45:0.7);
\draw (-4.75,1.3) arc (315:225:0.7);

\draw (-8.75,-1.3) arc (135:90:0.7) arc (90:45:0.7);
\draw (-7.75,-1.7) arc (315:270:0.7) arc (270:225:0.7);

\draw (1.75,-2.7) arc (135:45:0.7);
\draw (2.75,-3.2) arc (315:225:0.7);

\draw
(e) edge (6')
(6') edge (6) 
(6) edge (e) 
(e) edge (a2) 
(a2) edge (a1) 
(a1) edge (e) 
(a1) edge (b1) 
(b1) edge (4)
(4) edge (a1)
(b2) edge (a2) 
(a2) edge (4)
(4) edge (b2)
(b1) edge (b2) 
(b2) edge (c1) 
(c1) edge (b1) 
(c1) edge (d1) 
(d1) edge (g) 
(g) edge (c1) 
(c2) edge (d2) 
(d2) edge (g) 
(g) edge (c2) 
(d1) edge (1) 
(d2) edge (1) 
(2) edge (d1) 
(2) edge (d2) 
(1u) edge node[above left]{\footnotesize$\beta$} (2u) 
(1d) edge node[below left]{\footnotesize$\alpha$} (2d) 
;
\draw
(7) edge (ul) 
(ul) edge (dl)
(dl) edge (7)
(dr) edge (ur) 
(ur) edge (6)
(6) edge (dr)
;
\end{tikzpicture}
\]
with $14$ $f'$-orbits described by the shaded triangles
(all of length three),
a weight function $m'_{\bullet}$ 
of $(Q', f')$,
and where the idempotent $e'$
is the sum of the primitive idempotents in $B'$
associated to the vertices $1,2,3,4,5,6,7,8$.
\end{example}

We finish this section with a 
 combinatorial interpretation
of the $*$-construction in terms of Brauer graphs.

\addtocounter{subsection}{7}
\subsection{Barycentric division of Brauer graphs.}
\label{4.8}

Let $\Gamma$ be the Brauer graph so that $B_{\Gamma} = B(Q, f, m_{\bullet})$, then the
algebra $B(Q^*, f^*, m_{\bullet}^*)$ as in the $*$-construction of
Theorem \ref{th:4.1} is again a Brauer graph algebra, $B_{\Gamma^*}$ say, by Theorem \ref{th:2.6}.
The proof of Lemma \ref{lem:2.4} shows how to construct 
$\Gamma^*$:  Its vertices
are in bijection with the cycles of $g^*$.  First, each cycle of $g$ is 
\lq augmented\rq, by replacing an arrow $\alpha$ by $\alpha', \alpha''$, 
and this gives a cycle of $g^*$, we call a corresponding  vertex
of $\Gamma^*$ an augmented vertex.
Second, any other cycle of $g^*$ consists of $\ve$-arrows, 
and these cycles correspond to $f$-cycles of $Q$, as
described in Theorem \ref{th:4.1}. 
Let $F(\alpha)$ be the $f$-orbit of $\alpha$ in $Q$, 
then we write $v_{F(\alpha)}$ for the corresponding vertex of $\Gamma^*$, 
then the arrows attached to
this vertex are precisely the $\ve_{f^t(\alpha)}$.

The edges of $\Gamma^*$ are labelled by the vertices of $Q^*$, that is by
the vertices of $Q$ together with  the set $\{ x_{\alpha} \mid \alpha \in Q_1\}$.
The cyclic order around an augmented vertex is obtained by replacing 
$i\stackrel{\alpha}\longrightarrow j$ 
by
$$i\stackrel{\alpha'}\longrightarrow x_{\alpha}\stackrel{\alpha''}\longrightarrow j$$
in $\Gamma^*$.
A  vertex $v_{F(\alpha)}$ has attached arrows precisely 
the $\ve_{f^t(\alpha)}: x_{f^{t+1}(\alpha)}\to  x_{f^t(\alpha)}$. 
This specifies
 the edges adjacent, with   cyclic order given by
the inverse of the $f$-cycle of $\alpha$.
We may view $\Gamma^*$ as a \lq triangular\rq{}  graph: 


(1)
Assume that $|F(\alpha)| = 1$.
Then  $x_{\alpha}$ is the unique
edge in $\Gamma^*$ adjacent to $v_{F(\alpha)}$, and $x_{\alpha}$
is  its own successor in the cyclic order
of edges in $\Gamma^*$ around $v_{F(\alpha)}$.
Hence we have
in $\Gamma^*$ a self-folded triangle 
\[
\begin{tikzpicture}[auto]
\coordinate (c) at (0,0);
\coordinate (a) at (1,0);
\coordinate (b) at (-1,0);
\draw (a) node [right] {$v$} to node {$x_{\alpha}$} (c) node [left] {$v_{F(\alpha)}$};
\draw (b) arc (-180:180:1) node [left] {$i$};
\node (a) at (1,0) {$\bullet$};
\node (c) at (0,0) {$\bullet$};
\end{tikzpicture}
\]
which corresponds to a subquiver of  $(Q^*,f^*)$  of the form
\[
\xymatrix{ 
	x_{\alpha} \ar@(dl,ul)[]^{\varepsilon_{\alpha}} \ar@/^1.5ex/[r]^{\alpha''}
	& i \ar@/^1.5ex/[l]^{\alpha'}
}
\]
with $f^*$-orbit
$(\alpha'' \ \alpha' \ \varepsilon_{\alpha})$.

\smallskip

(2)
Assume that $|F(\alpha)| \geq 2$, and let 
$\beta = f(\alpha)$ starting at vertex $j$.
Let $v, w$ be the vertices in $\Gamma$ such that
$\alpha$ is attached to $v$ and $\beta$ is attached to $w$. 
Then 
$\Gamma^*$ has edges $x_{\alpha}$ and
$x_{\beta}$ connecting vertices $v$ and $w$ to vertex $v_{F(\beta)} (= v_{F(\alpha)})$.
Then $x_{\alpha}$ 
is the  successor of $x_{\beta}$
in the cyclic order of edges in $\Gamma^*$ 
around $v_{F(\alpha)}$.
Hence we have in $\Gamma^*$ a triangle
\[
\xymatrix@C=1.2pc{ 
	& \bullet \ar@{-}[ld]_{x_{\alpha}} 
	  \ar@{-}[rd]^{x_{\beta}} 
      \save[] +<0mm,3mm> *{v_{F(\beta)}} \restore 
	  \\
    \save[] +<-3mm,0mm> *{v} \restore 
	\bullet \ar@{-}[rr]_{j} && \bullet 
    \save[] +<+3mm,0mm> *{w} \restore 
}
\]
which corresponds to a subquiver of $(Q^*,f^*)$  of the form
\[
\xymatrix@C=1.2pc{ 
	x_{\alpha} \ar[rd]_{\alpha''} 
	  && x_{\beta} \ar[ll]_{\varepsilon_{\alpha}} \\ 
	& j \ar[ru]_{\beta'}
}
\]
with $f^*$-orbit
$(\alpha'' \ \beta' \ \varepsilon_{\alpha})$.
The multiplicity function $e^*$ of $\Gamma^*$
is given by $e^*(v) = e(v)$ for any vertex $v$
of $\Gamma$ (where $e$ is the multiplicity function for $\Gamma$), 
 and $e^*(v_{F(\alpha)})=1$ for any
$f$-orbit $F(\alpha)$.

The Brauer graph $\Gamma^*$
can be considered as a \emph{barycentric division}
of the Brauer graph $\Gamma$,
and has a triangular structure.
Namely, every $v_{F(\alpha)}$
is the vertex of $|F(\alpha)|$
triangles in $\Gamma^*$ whose edges opposite
to $v_{F(\alpha)}$ are the edges of $\Gamma$ 
corresponding to the vertices in $Q$ along $F(\alpha)$.

In this way, we obtain an orientable surface $S^*$
without boundary, the triangulation $T^*$ of $S^*$
indexed by the set of edges of $\Gamma$, 
and the orientation $\vv{T^*}$
of triangles in $T^*$ such that the associated
triangulation quiver $(Q(S^*,\vv{T^*}),f)$
is the quiver $(Q^*,f^*)$.
The triangulated surface $(S^*,T^*)$
can be considered as a completion of the
Brauer graph $\Gamma$ to a canonically defined
triangulated surface, by a finite number of pyramids
whose peaks are the $f$-orbits
and bases are given by the edges of $\Gamma$.
We also note that the surface $S^*$
(without triangulation $T^*$)
can be obtained as follows.
We may embed the Brauer graph $\Gamma$
into a surface $S$ 
with boundary given by thickening 
the edges of $\Gamma$. 
The components of the border $\partial S$ of $S$
are given by the 
\lq Green walks\rq{}  
around $\Gamma$ on $S$,
corresponding to the $f$-orbits in $Q_1$.
Then the surface $S^*$ is obtained from $S$
by capping all the boundary components of $S$
by the disks $D^2$.

\addtocounter{theorem}{1}

\begin{example}
\label{ex:4.10}
Let $\Gamma$ be the Brauer graph
\[
\begin{tikzpicture}[auto]
 \coordinate (c) at (0,0);
 \coordinate (a) at (1,0);
 \coordinate (b) at (-1,0);
 \coordinate (d) at (2.5,0);
 \coordinate (e) at (4.5,0);
 \draw (c) node[left]{$a$} to node {$3$} (a);
 \draw (a) node[above right]{$b$} to node {$2$} (d) node[above left]{$c$};
 \draw (b) arc (-180:180:1) node [left] {$4$};
 \draw (e) arc (0:360:1) node [right] {$1$};
 \node (a) at (1,0) {$\bullet$};
 \node (c) at (0,0) {$\bullet$};
 \node (d) at (2.5,0) {$\bullet$};
 \end{tikzpicture}
\]
where we take the clockwise ordering of edges around each vertex.
Assume  
the multiplicity function takes only value $1$.
Then the associated biserial quiver $(Q,f)$
is of the form
\[
  \xymatrix{ 
     3 \ar@(dl,ul)[]^{{\alpha}} \ar@/^1.5ex/[r]^{\beta}
       & 4 \ar@/^1.5ex/[l]^{\gamma} \ar@/^1.5ex/[r]^{\sigma}
       & 2 \ar@/^1.5ex/[l]^{\delta} \ar@/^1.5ex/[r]^{\varrho}
       & \ar@/^1.5ex/[l]^{\omega}
       1 \ar@(ur,dr)[]^{{\eta}} 
  }
\]
with 
$f$-orbits
\begin{align*}
  F(\alpha) &= (\alpha\ \beta\ \gamma), & 
  F(\sigma) &= (\sigma\ \varrho\ \omega\ \delta), & 
  F(\eta) &= (\eta), 
\end{align*}
and $\cO(g)$ consisting of
\begin{align*}
  \cO(\alpha) &= (\alpha), & 
  \cO(\beta) &= (\beta\ \sigma\ \delta\ \gamma), & 
  \cO(\omega) &= (\omega\ \varrho\ \eta).
\end{align*}
Then the barycentric division $\Gamma^*$ of $\Gamma$
is the  Brauer graph 
\[
\begin{tikzpicture}[auto]
 \node (o) at (0,0) { };
 \node (a) at (-2.5,0) {$\bullet$};
 \node (b) at (-1,0) {$\bullet$};

 \node (c) at (1,0) {$\bullet$};
 \node (u) at (-2.5,1) {$\bullet$};
 \node (v) at (0,-2) {$\bullet$};
 \node (w) at (2.5,0) {$\bullet$};
\draw (a) edge node [below] {$3$} (b);
\draw (a) edge node[below right] {$x_{\alpha}$}(u);
\draw (b) edge node[above left] {$2$} (c);
\draw (b) edge node[above] {$\ x_{\beta}\!$} (u);
\draw (v) edge  node {$x_{\delta}$} (b);
\draw (c) edge node {$x_{\eta}$} (w);
\draw (c) edge node {$x_{\omega}$} (v);
\draw (o) edge (v);
\draw (-1.01,0.2) arc (7:353:1.5);
\draw (1.01,0.2) arc (173:-72.5:1.5) edge (v);
\draw (o) edge (0,.7);
\draw (0,.7) arc (0:142:.3) edge (b);
\draw (-2.7,0.9) arc (135:340:1);
\draw (c) edge (1.7,.7);
\draw (1.7,.7) arc (135:-135:1) edge (c);
\draw (0,.7) node [right] {$x_{\delta}$};
\draw (w) node [right] {$w$};
\draw (a) node [left] {$a$};
\draw (b) node [above right] {$\ b\!$};
\draw (c) node [above left] {$c$};
\draw (u) node [above] {$u$};
\draw (v) node [below] {$v$};
\draw (4,0) node [right] {$x_{\varrho}$};
\draw (-4,0) node [left] {$4$};
\draw (3.6,0) node {$1$};
\draw (-3,-.6) node {$x_{\gamma}$};
 \end{tikzpicture}
\]
with 
$u = v_{F(\alpha)}$,
$v = v_{F(\sigma)}$ and  
$w = v_{F(\eta)}$.
The ordering of the edges around each vertex is clockwise.
The multiplicity function of $\Gamma^*$
takes only the value $1$.

The Brauer graph $\Gamma$ admits a canonical
embedding into the surface $S$ of the form
\[
\begin{tikzpicture}[auto,scale=1.2]
 \coordinate (o) at (-1.5,0);
 \coordinate (a) at (0,0);
 \coordinate (b) at (1,0);
 \coordinate (b1) at (-1,0); 
 \coordinate (c) at (3,0);
 \coordinate (e) at (5,0);
 
 \fill[fill=gray!20] (o) arc (180:30:1.5) -- (2.7,.75) arc (150:-150:1.5) -- (1.3,-.75)  arc (-30:-180:1.5)
          (4.5,0) arc (0:360:.5)
          (0.65,.375)  arc (30:330:.75) -- (-.375,-.375) -- (-.375,.375)  --  (0.65,.375)
           ;
 \draw[ultra thick] (a) node[left]{$a$} to node {$3$} (b);
 \draw[ultra thick] (b) node[below right]{$b$} to node {$2$} (c) node[below left]{$c$};
 \draw[ultra thick] (b1) arc (-180:180:1) node [left] {$4$};
 \draw[ultra thick] (e) arc (0:360:1) node [right] {$1$};
 \draw (o) arc (180:30:1.5) -- (2.7,.75) arc (150:-150:1.5) -- (1.3,-.75)  arc (-30:-180:1.5);
 \draw (0.65,-.375)  arc (330:30:.75) -- (-.375,.375) -- (-.375,-.375)  -- cycle;
 \draw (4.5,0) arc (0:360:.5);
 \node (a) at (1,0) {$\bullet$};
 \node (b) at (0,0) {$\bullet$};
 \node (c) at (3,0) {$\bullet$};
 \end{tikzpicture}
\]
obtained from $\Gamma$ by thickening the edges of $\Gamma$,
whose border $\partial S$ has three components given by three different 

\lq Green walks\rq{}  
around $\Gamma$ on $S$.
The triangulated surface $(S^*,T^*)$
associated to the Brauer graph $\Gamma^*$ can be viewed as a canonical
completion of $S$ to a triangulated surface.
\end{example}

\section{Proof of Theorem \ref{th:main3}}\label{sec:proof3}

This theorem describes algebras socle equivalent 
to Brauer graph algebras. 
By Theorem \ref{th:2.6}  
this is the same as describing algebras
socle equivalent to a biserial quiver algebra 
$A=B(Q, f, m_{\bullet})$ where $(Q, f)$ is a biserial quiver.
We show that such algebras can be described 
using the methods of \cite[Section~6]{ESk4}. 
Then  we show that the 
$*$-construction for the biserial quiver algebras can be extended.

\bigskip

Let $(Q,f)$ be a biserial quiver.
A vertex $i \in Q_0$ is said to be a \emph{border vertex}
of $(Q, f)$  if there is a loop $\alpha$ at $i$ with $f(\alpha) = \alpha$.
We denote by $\partial(Q,f)$
the set of all border vertices of $(Q,f)$,
and call it the \emph{border} of $(Q,f)$.
The terminology is motivated by the connection with surfaces: If
$(Q, f)$
is the triangulation quiver
$(Q(S,\vv{T}),f)$
associated to a directed triangulated surface 
$(S,\vv{T})$,
then the border vertices of $(Q,f)$
correspond bijectively to the boundary edges
of the triangulation $T$ of $S$. 
If $(Q, f)$ is the biserial quiver
associated to a Brauer graph $\Gamma$,
then the border vertices of  $(Q, f)$
correspond bijectively to the  internal   loops of  $\Gamma$
(see Section~\ref{sec:bisalg}).

\begin{definition}\label{def:5.1} 
Assume  $(Q, f, m_{\bullet} )$ is a biserial quiver
with $\partial(Q,f)$ not empty.
A function
\[
  b_{\bullet} : \partial(Q,f) \to K
\]
is said to be a \emph{border function} of $(Q,f)$.
We have the quotient algebra
\[
  B(Q,f,m_{\bullet},b_{\bullet})
   = K Q / J(Q,f,m_{\bullet},b_{\bullet}),
\]
where $J(Q,f,m_{\bullet},b_{\bullet})$
is the ideal in the path algebra $KQ$ 
generated by the elements:
\begin{enumerate}[(1)]
 \item
  $\alpha f({\alpha})$,
  for all arrows $\alpha \in Q_1$ which are not border loops,
 \item
  $\alpha^2 - b_{s(\alpha)} B_{\alpha}$,
  for all border loops $\alpha \in Q_1$,
 \item
  $B_{\alpha}
   - B_{\bar{\alpha}}$,
  for all arrows $\alpha \in Q_1$.
\end{enumerate}
We call such an algebra a \emph{biserial quiver algebra with border}.   
Note that if $b_{\bullet}$ is the zero function then
$B(Q,f,m_{\bullet},b_{\bullet}) = B(Q,f,m_{\bullet})$.
\end{definition}

We summarize the basic properties of these algebras.

\begin{proposition}
\label{prop:5.2} 
Let $(Q,f)$ be a biserial quiver
such that $\partial(Q,f)$ is
not empty,
and let 
$\bar{B} = B(Q,f,m_{\bullet},b_{\bullet})$,
and
$B = B(Q,f,m_{\bullet})$ where $m_{\bullet}$ and $b_{\bullet}$ are
weight and border functions.
Then the following statements hold.
\begin{enumerate}[(i)]
 \item
  $\bar{B}$ is a basic, indecomposable, finite-dimensional,
  symmetric, biserial algebra 
  with $\dim_K \bar{B} = \sum_{\cO \in \cO(g)} m_{\cO} n_{\cO}^2$.
 \item
  $\bar{B}$ is socle equivalent to $B$.
 \item
  If $K$ is of characteristic different from $2$,
  then $\bar{B}$ is isomorphic to $B$.
\end{enumerate}
\end{proposition}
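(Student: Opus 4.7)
For (i), I would proceed in parallel with the proof of Proposition~\ref{prop:2.3}. The algebra $\bar{B}$ differs from $B = B(Q,f,m_{\bullet})$ only in the border-loop relations: $\alpha^2 = 0$ is replaced by $\alpha^2 = b_{s(\alpha)} B_{\alpha}$. For a vertex $i$ with arrows $\alpha, \bar{\alpha}$ starting at $i$, the initial proper subwords of $B_{\alpha}$ and $B_{\bar{\alpha}}$ together with $e_i$ and the common socle element $B_{\alpha} = B_{\bar{\alpha}}$ still form a $K$-basis of $e_i \bar{B}$. The new relation is consistent with this basis since $\alpha^2$ is not among the listed paths (indeed $g(\alpha) = \bar{\alpha} \neq \alpha$, so $\alpha^2$ is not an initial subword of $B_{\alpha}$ or of $B_{\bar{\alpha}}$), while the right-hand side lies in the span of the basis vector $B_{\alpha}$. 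This yields the stated dimension. The symmetrizing form $\varphi : \bar{B} \to K$ is defined, as in Proposition~\ref{prop:2.3}, to send $B_{\alpha}$ to $1$ and every other basis path to $0$; verifying $\varphi(xy) = \varphi(yx)$ and that $\Ker \varphi$ contains no nonzero one-sided ideal reduces to the same cyclic argument as before, with the additional case $\varphi(\alpha \cdot \alpha) = b_{s(\alpha)}$ being trivially symmetric.

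For (ii), observe that the socles of $\bar{B}$ and $B$ are both spanned by the cycles $\{B_{\alpha}\}_{\alpha \in Q_1}$ (this follows from the basis analysis in (i)). Therefore, in the quotient $\bar{B}/\soc(\bar{B})$, the border-loop relation $\alpha^2 = b_{s(\alpha)} B_{\alpha}$ collapses to $\alpha^2 = 0$ because $B_{\alpha} \in \soc(\bar{B})$, matching the corresponding relation in $B/\soc(B)$. Since all remaining defining relations of $\bar{B}$ coincide with those of $B$, the natural surjections $KQ \to B/\soc(B)$ and $KQ \to \bar{B}/\soc(\bar{B})$ have the same kernel, yielding the required isomorphism.

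For (iii), assuming $\charact K \neq 2$, the plan is to construct an algebra automorphism $\sigma$ of $KQ$ sending the ideal $J(Q,f,m_{\bullet},b_{\bullet})$ onto $J(Q,f,m_{\bullet})$, which then induces the desired isomorphism $\bar{B} \cong B$. Set $\sigma$ to be the identity on every arrow that is not a border loop, and for each border loop $\alpha$ define
\[
  \sigma(\alpha) \;=\; \alpha - \tfrac{1}{2} b_{s(\alpha)} P_{\alpha},
\]
where $P_{\alpha}$ is the unique path with $B_{\alpha} = \alpha P_{\alpha}$. One verifies the identities $\alpha P_{\alpha} = B_{\alpha}$, $P_{\alpha} \alpha = B_{g(\alpha)} = B_{\bar{\alpha}} = B_{\alpha}$ (the second equality uses $g(\alpha) = \bar{\alpha}$, and the third is a defining relation of $\bar{B}$), and $P_{\alpha}^2 = 0$ (because the junction arrow $g^{n_\alpha - 1}(\alpha) \cdot g(\alpha)$ does not continue the $g$-cycle and is therefore killed by the relation $\beta f(\beta) = 0$ applied to $\beta = g^{n_\alpha - 1}(\alpha)$). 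Combining these gives $\sigma(\alpha)^2 = b_{s(\alpha)} B_\alpha - b_{s(\alpha)} B_\alpha = 0$. It remains to check that $\sigma$ preserves the cycle-equality relations $B_{\alpha} = B_{\bar{\alpha}}$ (the corrections terms cancel since they involve socle multiples of one another) and that $\sigma$ is invertible on $KQ$, which holds because $\sigma$ is the identity modulo $\rad^2$ whenever $P_{\alpha}$ has length at least $2$.

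The principal obstacle is the degenerate case in which \emph{both} arrows $\alpha, \bar{\alpha}$ at a common vertex $i$ are border loops. Then $g$ has the orbit $(\alpha, \bar{\alpha})$ of length $2$, the junction arrow is itself $\bar{\alpha}$, and the identity $P_{\alpha}^2 = 0$ fails because $\bar{\alpha}^2 = b_{i} B_{\alpha} \neq 0$ in general; moreover $P_\alpha$ has length $1$ when $m_\alpha = 1$, so $\sigma$ is no longer trivial modulo $\rad^2$, and the substitutions for $\alpha$ and $\bar{\alpha}$ become coupled. To handle this, I would seek scalars $\lambda_{\alpha}, \lambda_{\bar{\alpha}} \in K$ such that simultaneous substitutions of the form $\sigma(\alpha) = \alpha - \lambda_{\alpha} P_{\alpha}$ and $\sigma(\bar{\alpha}) = \bar{\alpha} - \lambda_{\bar{\alpha}} P_{\bar{\alpha}}$ satisfy $\sigma(\alpha)^2 = \sigma(\bar{\alpha})^2 = 0$. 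The conditions reduce to a pair of quadratic equations over $K$, which are solvable because $K$ is algebraically closed, and a generic choice of roots yields an invertible change-of-basis matrix. This follows the approach developed in \cite[Section~6]{ESk4}.
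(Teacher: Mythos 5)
Your argument is essentially correct where the proposition is actually applied, and it is considerably more explicit than the paper's own proof, which disposes of (ii) as ``clear from the definition'', deduces (i) from (ii) together with Proposition~\ref{prop:2.3}, and for (iii) merely refers to the arguments of \cite[Proposition 6.3]{ESk4}. Your substitution $\sigma(\alpha)=\alpha-\tfrac12 b_{s(\alpha)}P_{\alpha}$ is exactly the standard device behind that reference, and your verifications $\alpha P_\alpha = P_\alpha\alpha = B_\alpha$ and $P_\alpha^2=0$ are the right ones: note that $f(g^{n_\alpha-1}(\alpha))=\overline{g(g^{n_\alpha-1}(\alpha))}=\bar\alpha=g(\alpha)$, so the junction $g^{n_\alpha-1}(\alpha)\,g(\alpha)$ is a type (1) relation precisely because $g^{n_\alpha-1}(\alpha)$ is not itself a border loop.

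Two remarks on your ``principal obstacle''. First, it is less of an obstacle than you fear: if both arrows at a vertex $i$ are loops, then $2$-regularity and connectedness force $Q_0=\{i\}$, so for any biserial quiver with at least two vertices (the only situation in which the paper uses Proposition~\ref{prop:5.2}, since Theorem~\ref{th:5.3} assumes $K_0(A)$ has rank at least $2$) the second arrow $\bar\alpha$ at a border vertex is never a border loop, $n_\alpha\ge 3$, and your main argument applies without modification. Second, in the genuinely degenerate one-vertex case your proposed repair via quadratic equations cannot work in general, because the statement itself breaks down there: for the quiver with one vertex, two border loops and $m=1$, the algebra is commutative with relations $\alpha^2=b\,\alpha\bar\alpha=\bar\alpha^2$, $\alpha\bar\alpha=\bar\alpha\alpha$, and for $b=\pm1$ the ideal is not even admissible (the quotient surjects onto $K[t]$), while for $b^2\ne 1$ the isomorphism to the $b=0$ algebra does follow from equivalence of nondegenerate binary quadratic forms, as you suggest. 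So the correct move is to exclude $|Q_0|=1$ (as the paper implicitly does), not to solve for the scalars.
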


\begin{proof} 
Part (ii) is clear from the definition and then part
(i) follows from Proposition \ref{prop:2.3}.  
For the  last part
see  arguments  in the proof of
Proposition 6.3 in \cite{ESk4}.
\end{proof}

The following theorem gives a complete description
of symmetric algebras socle equivalent to a biserial
quiver algebra.

\begin{theorem}
        \label{th:5.3}
        Let $A$ be a basic, indecomposable,
        symmetric algebra with Gro\-t\-hen\-dieck group $K_0(A)$
        of rank at least $2$.
        Assume that $A$ is socle equivalent to a biserial
        quiver algebra $B(Q,f,m_{\bullet})$.
        \begin{enumerate}
                \item If $\partial(Q, f)$ is empty then $A$ is isomorphic to
                        $B(Q,f,m_{\bullet})$. 
                \item
                       Otherwise $A$ is isomorphic to 
                        $B(Q,f,m_{\bullet},b_{\bullet})$
                        for some
                        border function $b_{\bullet}$ of $(Q,f)$.
        \end{enumerate}
\end{theorem}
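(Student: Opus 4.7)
My plan is to adapt the argument of Theorem~\ref{th:2.6}\,(iii)\,$\Rightarrow$\,(ii), keeping careful track of the socle corrections that arise because $A$ is merely socle equivalent to $B := B(Q,f,m_\bullet)$ rather than isomorphic to it, and to show that all such corrections can be absorbed into the choice of arrow representatives except at border loops, where the surviving coefficients define the border function $b_\bullet$.

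Since $A/\soc(A) \cong B/\soc(B)$, the algebra $A$ is basic, indecomposable and symmetric with the same Gabriel quiver $Q_B$ (that is, $Q$ with virtual loops removed) and with the same dimension vector of each projective indecomposable, so in particular each $\soc(e_i A)$ is one-dimensional. Fixing a symmetrizing form $\varphi : A \to K$, I normalize lifts of the arrows $g$-orbit by $g$-orbit exactly as in Theorem~\ref{th:2.6}, so that $\varphi(B_\alpha) = 1$ for every $\alpha \in Q_1$; with this choice, the relations $B_\alpha = B_{\bar\alpha}$ of Definition~\ref{def:5.1}(3) hold in $A$.

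For a type-(1) product $\alpha f(\alpha) \in A$, the socle equivalence forces $\alpha f(\alpha) \in \soc(A) \cap e_{s(\alpha)} A e_{t(f(\alpha))}$. Since $\soc(e_i A) \subseteq e_i A e_i$ and is spanned by $B_\alpha$, this intersection vanishes whenever $s(\alpha) \neq t(f(\alpha))$, and otherwise equals $K\cdot B_\alpha$, yielding $\alpha f(\alpha) = c_\alpha B_\alpha$ for some $c_\alpha \in K$. If $\alpha$ is not a border loop then $f(\alpha) \neq \alpha$, and I plan to eliminate $c_\alpha$ by modifying the lift of $\alpha$ by a suitable scalar multiple of a longer path $s(\alpha) \to t(\alpha)$ whose composition with $f(\alpha)$ recovers $B_\alpha$; this is possible thanks to the biserial structure of $A/\soc(A)$ together with the one-dimensionality of the socle. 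For a border loop $\alpha$, the analogous relation reads $\alpha^2 = b_{s(\alpha)} B_\alpha$; any rescaling $\alpha \mapsto \lambda\alpha$ that preserves $\varphi(B_\alpha) = 1$ must satisfy $\lambda^{m_\alpha} = 1$, and then $b_{s(\alpha)}$ changes only by the factor $\lambda^{-2}$, so it cannot in general be removed. The assignment $s(\alpha) \mapsto b_{s(\alpha)}$ then defines the required border function $b_\bullet : \partial(Q,f) \to K$, giving part~(2); part~(1) is the special case $\partial(Q,f) = \emptyset$, where no border corrections occur.

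The main obstacle is the elimination step for non-border arrows: one has to show that the corrections $c_\alpha$ can be absorbed \emph{simultaneously} for all such $\alpha$, without disturbing the already-achieved normalization $\varphi(B_\alpha) = 1$ and without introducing new socle corrections at other vertices. This requires a careful local combinatorial analysis of the $f$- and $g$-orbits through each vertex $s(\alpha)$, in the spirit of the arguments developed in \cite[Section~6]{ESk4}, and is the step where the asymmetry between border loops and all other arrows is decisively used.
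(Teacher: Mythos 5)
Your plan follows the same overall strategy as the paper (adapt the argument of Theorem~\ref{th:2.6}, absorb the socle corrections into new choices of arrow representatives, and let the unabsorbable coefficients at border loops define $b_{\bullet}$), but as written it is a proof outline rather than a proof: the two places where the real work happens are left open, and one of them is not even flagged.

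First, your normalization $\varphi(B_{\alpha})=1$ and your claim that $\soc(e_iA)$ ``is spanned by $B_{\alpha}$'' presuppose that $B_{\alpha}\neq 0$ in $A$. Socle equivalence only tells you this modulo the socle, i.e.\ that $A_{\alpha}\notin\soc(A)$ and hence $A_{\alpha}\rad(A)=\soc(e_iA)$; that socle is spanned by the two products $A_{\alpha}\beta$ and $A_{\alpha}\gamma$ with $\beta=g^{-1}(\alpha)$ and $\gamma=f(g^{-2}(\alpha))$, and a priori the ``wrong'' one could be the nonzero one, making $B_{\alpha}=A_{\alpha}\beta=0$ in $A$. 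The paper's step (I) shows that this degenerate situation forces a very rigid configuration ($m_{\alpha}=1$, a $g$-cycle $(\alpha\ g(\alpha))$ of length two and an $f$-cycle $(\alpha\ \gamma)$ of length two, i.e.\ a pair of double arrows) and then repairs it by interchanging $\beta$ and $\gamma$. Without this step your starting point is not justified.

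Second, you explicitly defer the simultaneous elimination of the corrections $c_{\alpha}$ for non-border arrows as ``the main obstacle'' requiring ``a careful local combinatorial analysis.'' That analysis is precisely the content of the paper's step (II), and it is not routine: one adjusts $f(\alpha)\mapsto f(\alpha)-a_{\alpha}A_{g(\alpha)}$ and iterates around each $f$-orbit, and the iteration closes up for free only when some $f^{r-1}(\alpha)\alpha$ is not a cyclic path. In the exceptional case where every such product is cyclic one must have $r=2$ or $r=4$; the $r=2$ case is killed using the symmetrizing form ($\varphi(f(\alpha)'\alpha)=\varphi(\alpha f(\alpha)')=0$), and the $r=4$ case requires an explicit computation on the two-vertex double-arrow quiver. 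Since naming the obstacle is not the same as overcoming it, the proposal has a genuine gap at exactly the step that distinguishes this theorem from Theorem~\ref{th:2.6}. (A smaller point: modifying $\alpha$ rather than $f(\alpha)$, as you suggest, can be made to work because $f(\alpha)\in\{g^{-1}(\alpha),g^{-1}(\bar{\alpha})\}$, but you would still need to control how this perturbs $B_{\alpha}$, the already-fixed normalization, and the neighbouring relations.)
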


\begin{proof} 
Let $B= B(Q, f, m_{\bullet}) = KQ/J$ where $J = J(Q, f, m_{\bullet})$. 
Since
$A/\soc(A)$ is isomorphic to $B/\soc(B)$, 
we can assume that these are equal, 
using an isomorphism as identification.
We assume $A$ is symmetric, therefore for each $i \in Q_0$, 
the module $e_iA$ has a 1-dimensional socle which is spanned by some
$\omega_i \in e_iAe_i$, and we fix such an element.
Then let $\vf$ be a symmetrizing linear form for $A$, 
then $\vf(\omega_i)$ is non-zero.  
We may assume that $\vf(\omega_i) = 1$.

We claim that $\soc (A) \subset (\rad A)^2 (\subset \rad A)$.
If not, then for some $j$ we have $\omega_j \not\in (\rad A)^2$.
This means that $e_jA = e_jAe_j$, which is not possible since $A$ is
indecomposable with at least two simple modules.
It follows that $A$ and $B$ have the same Gabriel quiver.
Recall that the quiver $Q$
is the disjoint union of the Gabriel quiver of $B$ with virtual loops.
Any virtual loop of $Q$ is then in the socle of $B$ and it is zero in
$B/\soc (B)$ and is therefore zero in $A/\soc (A)$.
We may  therefore take $A$ of the form $A=KQ/I$ for the same quiver
$Q$, and some ideal $I$ of $KQ$, and such that any virtual loop lies
in the socle of $A$.

\bigskip

In the  algebra $B$ we define monomials $A_{\alpha}$ 
in the arrows by setting $B_{\alpha} = A_{\alpha}g^{-1}(\alpha)$ 
when $\alpha$ is not a virtual loop, and then 
as well $B_{\alpha} = \alpha A_{g(\alpha)}$. 
Note that if $\alpha$ is a virtual loop then $A_{\alpha}$ is not
defined.
With this, the  elements $A_{\alpha}$ belong to the 
socle  of $B/\soc(B)$ and hence also to  the socle of
$A/\soc (A)$. 
Therefore they  cannot lie in the socle of $A$ 
(because if so then they would be zero in $A/\soc (A)$).
Then $0\neq A_{\alpha}\rad(A) = \soc (e_iA)$ where $i=s(\alpha)$.
We have that $A_{\alpha} \rad(A)$ is spanned by
$$A_{\alpha}\beta, A_{\alpha}\gamma$$
where $\beta = g^{-1}(\alpha)$ and $\gamma = f(g^{-2}(\alpha))$.

\bigskip

(I) We may assume that  $A_{\alpha}\beta = B_{\alpha}$  in $A$
(and hence is equal to $B_{\alpha}$ in $KQ$).

If not, then we have  $A_{\alpha}\beta = 0$, and then  
$A_{\alpha}\gamma\neq 0$.
We will show that we may interchange $\beta$ and $\gamma$.

Since $A_{\alpha}\gamma \neq 0$,  in particular 
$g^{-2}(\alpha)\gamma \neq 0$  and also $t(\gamma) = i = s(\alpha)$.
Since $\gamma = f(g^{-2}(\alpha))$ we know that 
$g^{-2}(\alpha)\gamma$ belongs to the socle of $A$. 
It is non-zero, which implies that 
$A_{\alpha} = g^{-2}(\alpha)$ (and $m_{\alpha}=1$), 
and therefore $\alpha = g^{-2}(\alpha)$, and  $\gamma = f(\alpha)$.
We claim that $g(\alpha)\neq \alpha$. 
Namely if we had $g(\alpha)=\alpha$ 
then both $\alpha$ and $f(\alpha)$ would be loops at vertex $i$ and
$|Q_0|=1$, which contradicts our assumption.
Hence the cycle of $g$ containing $\alpha$ is $(\alpha \ g(\alpha))$, of length two. 
We claim that also the $f$-cycle of $\alpha$ (in $B$) has length two.
Namely if $\ba$ is the other arrow starting at $i$ and $\rho$ 
is the other arrow ending at $j=t(\alpha)$ then we must have
by the properties of $f$ and $g$ that $f(\rho) = \beta$ and $f(\beta) = \ba$. 
This implies that $f(\gamma) = \alpha$ and hence
$f$ has a cycle $(\alpha \ \gamma)$.

It follows that 
there is an algebra isomorphism from $B$  
to the biserial quiver algebra $B'$ given by the weighted
biserial quiver obtained from $(Q,f,m_{\bullet})$
by interchanging
$\beta$ and $\gamma$ (which form a pair of double arrows) and fixing
all other arrows of $Q$. 
We replace $B$ by  $B'$ and the claim follows.

\bigskip

(II) We show that relation (1) holds in $A$. 
If $\alpha$ is a virtual loop of $B$ then $\alpha f(\alpha)=0$ since
$\alpha \in \soc (A)$. 
We consider now an arrow $\alpha$ which is not
a virtual loop. Suppose $\alpha$ is not fixed by $f$, then 
 $\alpha f(\alpha)$ belongs to the socle of $A$. 
We can write 
$\alpha f(\alpha) = a_{\alpha} B_{\alpha} = a_{\alpha}\alpha A_{g(\alpha)}$ 
for some $a_{\alpha} \in K$
(here $g(\alpha)$ is not a virtual loop). 

(a) If $s(\alpha)\neq t(f(\alpha))$ then 
$\alpha f(\alpha) = \alpha f(\alpha) e_{s(\alpha)}= 0$, 
in fact this holds for any choice of $\alpha, f(\alpha)$.

(b) Otherwise, we set
$$f(\alpha)':= f(\alpha) - a_{\alpha} A_{g(\alpha)}$$
and we replace $f(\alpha)$ by $f(\alpha)'$.
(If a cycle of $f$ has a virtual loop then $\alpha f(\alpha)$ and 
$f^{-1}(\alpha)\alpha$ are not cyclic paths, so they are zero and do not need
adjusting.)
These 
modifications must be iterated. Take   a cycle of $f$, 
say it has length $r$, so that
$r\geq 2$.

Assume first this cycle contains an arrow $\alpha$ such that $f^{r-1}(\alpha)\alpha$ is not a cyclic path. We may start with $\alpha$ and adjust
$f(\alpha), f^2(\alpha), \ldots , f^{r-1}(\alpha)$ as described above. Then $f^{r-1}(\alpha)'\cdot \alpha =0$, by (a) above.

Otherwise, 
 for any $\alpha$ in the cycle, $f^{r-1}(\alpha)\alpha$ is cyclic, and
 then we must have $r=2$ or $r=4$.
Assume that $r=2$. 
We adjust $f(\alpha)$ as described in (b) and have
$\alpha f(\alpha)'=0$ in $A$, and we must show that as well
$f(\alpha)'\alpha=0$. 
By the assumption, $f(\alpha)'\alpha = c \omega_i$ for some
$c\in K$. We have
$$c = \vf(c\omega_i) = \vf(f(\alpha)'\alpha) =  \vf(\alpha f(\alpha)') = \vf(0) = 0.
$$
Assume now that $r=4$.
Since $Q$ is $2$-regular, $Q$ is of the form
\[
   \xymatrix@=3pc{
    1 \ar@/^2ex/[r]_{\alpha_1}  \ar@/^2ex/@<+1ex>[r]^{\alpha_3} 
    & 2 \ar@/^2ex/[l]_{\alpha_2}  \ar@/^2ex/@<+1ex>[l]^{\alpha_4}
   }
\]
with $f$-orbit $(\alpha_1\ \alpha_2\ \alpha_3\ \alpha_4)$  
and $g$-orbit $(\alpha_1\ \alpha_4\ \alpha_3\ \alpha_2)$.
We adjust $\alpha_2, \alpha_3, \alpha_4$ as in (b) to have
$\alpha_1 \alpha_2 = 0$, $\alpha_2 \alpha_3 = 0$, $\alpha_3 \alpha_4 = 0$.
By assumption we have 
$\alpha_4 \alpha_1 = a B_{\alpha_4} = a (\alpha_4 \alpha_3 \alpha_2 \alpha_1)^m$
for some $a \in K$ and $m \in \mathbb{N}_{\geq 1}$.
We replace $\alpha_1$ by 
$\alpha'_1 = \alpha_1 - a \alpha_3 \alpha_2 \alpha_1 (\alpha_4 \alpha_3 \alpha_2 \alpha_1)^{m-1}$
and obtain $\alpha_4 \alpha'_1 = 0$.
Observe that we have also $\alpha'_1 \alpha_2 = 0$,
because $\alpha_1 \alpha_2 = 0$.

(III) \ We show  that relation (3) holds in $A$.
For each arrow $\alpha \in Q_1$, we have
$B_{\alpha} = c_{\alpha} \omega_{s(\alpha)}$
for some $c_{\alpha} \in K^*$.
We claim that $c_{\sigma} = c_{\alpha}$
for any arrow $\sigma$ in the $g$-orbit
$\cO(\alpha)$ of $\alpha$.
Indeed, if $\sigma$ belongs to $\cO(\alpha)$,
then
\begin{align*}
 c_{\sigma} 
  &= c_{\sigma} \varphi \big(\omega_{s(\sigma)})  
   = \varphi \big(c_{\sigma} \omega_{s(\sigma)})  
   = \varphi (B_{\sigma})  
   = \varphi (B_{\alpha})  
 \\ & 
   = \varphi \big(c_{\alpha} \omega_{s(\alpha)})  
   = c_{\alpha} \varphi \big(\omega_{s(\alpha)})  
   = c_{\alpha}.
\end{align*}
Since $K$ is algebraically closed, we may
choose $d_{\alpha} \in K^*$
such that
$d_{\alpha}^{m_{\alpha} n_{\alpha}} = c_{\alpha}^{-1}$.
Replacing now the representative
of each arrow $\alpha \in Q_1$ in $A$
by its product with $d_{\alpha}$,
we obtain a new presentation $A \cong K Q / I'$
such that $B_{\alpha} = \omega_{s(\alpha)}$
for any arrow $\alpha \in Q_1$.
This does not change the relations (1) obtained above.
Therefore, we may assume that,
if $i \in Q$ is any vertex,
$\alpha$ and $\bar{\alpha}$
are the arrows in $Q$ with source  $i$,
then
$B_{\alpha} = \omega_{i} = B_{\bar{\alpha}}$
in $A$.

(IV) \ We show that relation (2) holds in $A$.
When  the border $\partial(Q,f)$ of $(Q,f)$ is empty, there is
nothing to do (and $A$ is isomorphic to
$B$).
Assume now that $\partial(Q,f)$
is not empty.
Then for any loop $\alpha$ with
$i = s(\alpha) \in \partial(Q,f)$,
we have
$\alpha^2 = \alpha f(\alpha) = b_i \omega_i = b_i B_{\alpha}$
for some $b_i \in K$.
Hence, we have a border function
$b_{\bullet} : \partial(Q,f) \to K$,
and $A$ is isomorphic to the algebra
$B(Q, f, m_{\bullet}, b_{\bullet})$.
\end{proof}

Recall that a self-injective algebra $A$ is biserial if the
radical of any indecomposable non-uniserial projective, left or right,
$A$-module is a sum of two uniserial modules whose intersection is simple.

Theorem~\ref{th:main3} follows from
Theorems \ref{th:2.6}, \ref{th:3.1}, \ref{th:5.3} 
and the following relative version of 
Theorem~\ref{th:4.1} 
(see Remark~\ref{rem:4.3}). 

\begin{theorem}
\label{th:5.4}  
Let $B= B(Q, f, m_{\bullet})$ where $Q$ has 
at least two vertices, and where the border  $\partial(Q,f)$ is not empty.
Then there is a  canonically defined
weighted triangulation quiver $(Q^\#,f^\#,m_{\bullet}^\#)$ 
such that the following statements hold.
\begin{enumerate}[(i)]
  \item
    $|\partial(Q,f)| = |\partial(Q^\#,f^\#)|$.
  \item
    $B$ is isomorphic to the idempotent algebra $e^\# B^\# e^\#$
    of the biserial weighted triangulation algebra
    $B^\# = B(Q^\#,f^\#,m_{\bullet}^\#)$
    with respect to a canonically defined idempotent $e^\#$ of $B^\#$.    
  \item
    For any border function $b_{\bullet}$ of $(Q,f)$
    and the induced border function $b_{\bullet}^\#$ of $(Q^\#,f^\#)$,
    the algebras
    $B(Q,f,m_{\bullet},b_{\bullet})$
    and
    $e^\# B(Q^\#,f^\#,m_{\bullet}^\#,b_{\bullet}^\#) e^\#$
    are isomorphic.
\end{enumerate}
\end{theorem}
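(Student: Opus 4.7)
The plan is to carry out a selective variant of the $*$-construction from Theorem~\ref{th:4.1}, which I will call the $\#$-construction. Following the hint in Remark~\ref{rem:4.3}, for every arrow $\alpha \in Q_1$ that is \emph{not} a border loop I introduce a new vertex $x_\alpha$ and three new arrows $\alpha', \alpha'', \varepsilon_\alpha$ exactly as in the proof of Theorem~\ref{th:4.1}; each border loop $\alpha$ is instead kept unchanged in $Q^\#$, with $f^\#(\alpha) = \alpha$. A key observation is that if $\alpha$ is not a border loop, then $f(\alpha)$ cannot be a border loop either (otherwise $f(f(\alpha)) = f(\alpha)$ would force $\alpha = f(\alpha)$), so the assignments $f^\#(\alpha'') = f(\alpha)'$, $f^\#(f(\alpha)') = \varepsilon_\alpha$, $f^\#(\varepsilon_\alpha) = \alpha''$ are unambiguous. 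The weight function $m_\bullet^\#$ is defined on $\cO(g^\#)$ exactly as in Theorem~\ref{th:4.1}. All $f^\#$-orbits then have length $1$ (preserved border loops) or $3$ (triples $(\alpha'', f(\alpha)', \varepsilon_\alpha)$), so $(Q^\#, f^\#)$ is indeed a triangulation quiver.

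For statement (i), no $\varepsilon_\alpha$ is a loop: since $\alpha$ is non-border, $\alpha \neq f(\alpha)$, so $\varepsilon_\alpha$ has distinct endpoints $x_{f(\alpha)}$ and $x_\alpha$. Hence the border loops of $(Q^\#, f^\#)$ are precisely the border loops of $(Q, f)$, yielding a canonical identification $\partial(Q^\#, f^\#) = \partial(Q, f)$. Statement (ii) reduces to Theorem~\ref{th:4.1}: with $e^\# := \sum_{i \in Q_0} e_i^\#$, the argument from Proposition~\ref{prop:2.7} applies verbatim, giving the isomorphism $B \cong e^\# B^\# e^\#$ in which each non-border-loop arrow $\beta$ of $Q$ corresponds to the path $\beta' \beta''$ in $B^\#$ while each border loop $\alpha$ maps to itself.

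For statement (iii), set $b_\bullet^\# := b_\bullet$ under the identification provided by (i). The decisive point is to show that for every border loop $\alpha$ at a vertex $i \in Q_0$, the cycle $B_\alpha^\#$ in $B^\#$ corresponds, under the isomorphism of (ii), to the cycle $B_\alpha$ in $B$. To see this I would trace the $g^\#$-orbit of $\alpha$: since $f^\#(\alpha) = \alpha$, one has $g^\#(\alpha) = \bar{\alpha}'$ (the case where $\bar{\alpha}$ itself is a border loop is excluded by connectedness of $Q$ together with $|Q_0| \geq 2$), and then each subsequent pair of steps of $g^\#$ either traverses a pair $\beta' \beta''$ — which collapses to the arrow $\beta$ under the identification — or passes through a surviving border loop. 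Because $m_\alpha^\# = m_\alpha$, the element $B_\alpha^\#$ is thereby identified with $B_\alpha$, and the border relation $\alpha^2 = b_i^\# B_\alpha^\#$ in $B^\#$ pulls back to $\alpha^2 = b_i B_\alpha$ in $B(Q, f, m_\bullet, b_\bullet)$. The remaining defining relations of $B(Q,f,m_\bullet,b_\bullet)$ are handled verbatim by (ii).

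The main obstacle will be the combinatorial bookkeeping in step (iii): carefully tracking the $g^\#$-orbit structure at a border vertex, and verifying that the weight function $m_\bullet^\#$ is correctly matched to $m_\bullet$ along the full orbit, including the case when several border loops of $(Q,f)$ occur within a single $g$-orbit. Once this identification $B_\alpha^\# \leftrightarrow B_\alpha$ is rigorously established, the border relations translate directly through the idempotent and the conclusion of (iii) follows.
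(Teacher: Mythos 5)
Your proposal is correct and follows essentially the same route as the paper: the same selective $\#$-construction (new vertices $x_\alpha$ and arrows $\alpha',\alpha'',\varepsilon_\alpha$ only for non-border-loop arrows, border loops kept fixed by $f^\#$), the same idempotent $e^\#$, and the same reduction of (ii) to the argument of Proposition~\ref{prop:2.7} followed by tracing the $g^\#$-orbits to match $B_\alpha^\#$ with $B_\alpha$ for (iii). Your explicit checks that $f$ cannot send a non-border arrow to a border loop, and that the two arrows at a border vertex cannot both be border loops when $|Q_0|\geq 2$, are correct and in fact make explicit two points the paper leaves implicit.
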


\begin{proof}  
The construction of $(Q^{\#}, f^{\#}, m_{\bullet}^{\#})$
is analogous to the $*$-construction in
Theorem \ref{th:4.1}. 
We take the notation as in Theorem \ref{th:4.1} 
and in addition we denote by $Q_1^b$ the set of all border loops 
of the quiver. 
We define a triangulation quiver $(Q^\#, f^\#)$ as follows.
We take $Q^\# = (Q^\#_0,Q^\#_1,s^\#,t^\#)$ with
\begin{align*}
  Q_0^\# & := Q_0 \cup \{x_{\alpha}\}_{\alpha \in Q_1 \setminus Q_1^b} , &
  Q_1^\# & := Q_1^b \cup 
      \{\alpha', \alpha'', \varepsilon_{\alpha} \}_{\alpha \in Q_1 \setminus Q_1^b}, 
\end{align*}
$s^\#(\beta) = s(\beta) = t(\beta) = t^\#(\beta)$
for all loops $\beta \in Q_1^b$,
and 
$s^\#(\alpha') = s(\alpha)$,
$t^\#(\alpha') = x_{\alpha}$,
$s^\#(\alpha'') = x_{\alpha}$,
$t^\#(\alpha'') = t(\alpha)$,
$s^\#(\varepsilon_{\alpha}) = x_{f(\alpha)}$,
$t^\#(\varepsilon_{\alpha}) = x_{\alpha}$,
for any arrow $\alpha \in Q_1 \setminus Q_1^b$.
Moreover, we set 
$f^\#(\eta) = \eta$ for any loop $\eta \in Q_1^b$,
and
$f^\#(\alpha'') = f(\alpha)'$,
$f^\#(f(\alpha)') = \varepsilon_{\alpha}$,
$f^\#(\varepsilon_{\alpha}) = \alpha''$,
for any arrow $\alpha \in Q_1 \setminus Q_1^b$.
We observe that $(Q^\#, f^\#)$ is 
a triangulation quiver
with $\partial(Q^\#,f^\#) = \partial(Q,f)$.
Let $g^\#$ be the permutation of $Q_1^\#$
associated to $f^\#$.
For each arrow $\beta$ in $Q_1^\#$,
we denote by $\cO^\#(\beta)$ the $g^\#$-orbit
of $\beta$ in $Q_1^\#$.
Then the  $g^\#$-orbits in $Q_1^\#$
are
\begin{align*}
  \cO^\#(\eta) & = \big(\eta\ g(\eta)' \ g(\eta)'' \ \dots 
                                \ g^{n_{\eta}-1}(\eta)' \ g^{n_{\eta}-1}(\eta)''\big) , 
\end{align*}
for any loop $\eta \in Q_1^b$, and
\begin{align*}
  \cO^\#(\alpha') & = \big(\alpha' \ \alpha'' \ g(\alpha)' \ g(\alpha)'' \ \dots 
                                \ g^{n_{\alpha}-1}(\alpha)' \ g^{n_{\alpha}-1}(\alpha)''\big) , \\
  \cO^\#(\varepsilon_{\alpha}) & = \big(\varepsilon_{f^{r_{\alpha}-1}(\alpha)} 
                                \ \varepsilon_{f^{r_{\alpha}-2}(\alpha)} \ \dots 
                                \ \varepsilon_{f(\alpha)} \ \varepsilon_{\alpha}\big) , 
\end{align*}
for any arrow $\alpha \in Q_1 \setminus Q_1^b$ 
(where $r_{\alpha}$ is the length of the $f$-orbit of $\alpha$).
We define the weight function 
${m}_{\bullet}^\# : \cO(g^\#) \to \bN^*$ 
by 
${m}_{\cO^\#(\eta)}^\# = m_{\eta}$ 
for any loop $\eta \in Q_1^b$, and
${m}_{\cO^\#(\alpha')}^\# = m_{\alpha}$ 
and
${m}_{\cO^\#(\varepsilon_{\alpha})}^\# = 1$
for any arrow $\alpha \in Q_1 \setminus Q_1^b$.

Let $B^\# = B(Q^\#,f^\#,m_{\bullet}^\#)$
be the biserial weighted triangulation algebra
associated to $(Q^\#,f^\#,m_{\bullet}^\#)$
and  $e^\#$ the sum of the primitive idempotents
$e_i^\#$ in $B^\#$ associated to the vertices $i \in Q_0$.   
Then it follows from the arguments 
as in the proof of Proposition~\ref{prop:2.7}
that $B$ is isomorphic to the idempotent algebra $e^\# B^\# e^\#$.
Moreover, let $b_{\bullet}$ be a border function of $(Q,f)$ 
and
$b_{\bullet}^\#$ be the induced border function of $(Q^\#,f^\#)$,
that is $b_{i}^\# = b_{i}$ for any border vertex $i$.
Then it follows from the description of 
$g^\#$-orbits in $Q_1^\#$
and the definition of the weight function 
${m}_{\bullet}^\#$ that
$B(Q,f,m_{\bullet},b_{\bullet})$
is isomorphic to the idempotent algebra
$e^\# B(Q^\#,f^\#,m_{\bullet}^\#,b_{\bullet}^\#) e^\#$.
\end{proof} 

\begin{example}
This illustrates the $\#$-construction in 
Theorem \ref{th:5.4}. Let $(Q,f)$ be the biserial quiver 
\[
  \xymatrix@C=3pc@R=3pc{
    1
      \ar[r]^{\alpha}
      \ar@(l,u)^{\varrho}[] 
  &
    2
      \ar[d]^{\beta}
      \ar@(u,r)^{\eta}[] 
  \\ 
    4
      \ar[u]^{\sigma}
      \ar@(d,l)^{\xi}[] 
  &
    3
      \ar[l]^{\gamma}
      \ar@(r,d)^{\mu}[] 
  } 
\]
with  $f$-orbits
$(\alpha \ \beta \ \gamma\ \sigma)$,
$(\varrho)$,
$(\eta)$,
$(\mu)$,
$(\xi)$.
Then the border $\partial(Q,f)$ of $(Q,f)$ is the set
$Q_0 = \{1,2,3,4\}$ of all vertices of $Q$, and
$\varrho, \eta, \mu, \xi$ are the border loops.
Further, 
$g$ has only one orbit,  
$\cO(\alpha) = (\alpha \ \eta\ \beta \ \mu\ \gamma\ \xi\ \sigma\ \varrho)$.
We take the weight function
$m_{\bullet} : \cO(g) \to \bN^*$
with $m_{\cO(\alpha)} = 1$.
Moreover, let $b_{\bullet} : \partial(Q,f) \to K$
be a border function. 
Then we describe the associated algebra 
$B(Q,f,m_{\bullet},b_{\bullet})$.
It has quiver $Q$, and to simplify the notation for
the relations, we use the notion of $B_{\alpha}$ for an 
arrow $\alpha$, as it has appeared throughout,
\begin{align*}
 \varrho^2 &= b_1B_{\rho}, &  \ B_{\rho} &= B_{\alpha}, & \ \alpha\beta &=0,
\\
 \eta^2 &= b_2 B_{\eta}, &  \ B_{\eta} &= B_{\beta}, & \ 
 \beta \gamma &= 0,
\\
 \mu^2 &= b_3 B_{\mu}, & \ 
B_{\mu} 
     &= B_{\gamma}, & \ 
 \gamma \sigma &= 0,
\\
 \xi^2 &= b_4 B_{\xi}, & \ 
 B_{\xi}  &= B_{\sigma}, & \ 
 \sigma \alpha &= 0.
\end{align*}
Note that the algebra $B(Q,f,m_{\bullet})$
is given by the quiver $Q$ and the above relations
such that all $b_i$ are zero.
By the arguments as in
\cite[Example~6.5]{ESk4}, if $K$ has characteristic $2$ and 
$b_{\bullet}$ is non-zero, 
then the algebras 
$B(Q,f,m_{\bullet},b_{\bullet})$
and
$B(Q,f,m_{\bullet})$
are not isomorphic.

The triangulation quiver
$(Q^\#,f^\#)$ is of the form
\[
\begin{tikzpicture}
[->,scale=.8]
\coordinate (1) at (-2,2) ;
\coordinate (2) at (2,2) ;
\coordinate (3) at (2,-2) ;
\coordinate (4) at (-2,-2) ;
\coordinate (up) at (0,2) ;
\coordinate (down) at (0,-2) ;
\coordinate (right) at (2,0) ;
\coordinate (left) at (-2,0) ;
\fill[rounded corners=3mm,fill=gray!20] (1) -- (up) -- (left) -- cycle;
\fill[rounded corners=3mm,fill=gray!20] (2) -- (right) -- (up) -- cycle;
\fill[rounded corners=3mm,fill=gray!20] (3) -- (down) -- (right) -- cycle;
\fill[rounded corners=3mm,fill=gray!20] (4) -- (left) -- (down) -- cycle;
\node (1) at (-2,2) {$1$};
\node (2) at (2,2) {$2$};
\node (3) at (2,-2) {$3$};
\node (4) at (-2,-2) {$4$};
\node (up) at (0,2) {$x_{\alpha}$};
\node (down) at (0,-2) {$x_{\gamma}$};
\node (right) at (2,0) {$x_{\beta}$};
\node (left) at (-2,0) {$x_{\sigma}$};
\node [circle,minimum size=.5cm](A) at (-2,2) {1};
\node [circle,minimum size=1.cm](B) at (-2.5,2.5) {};
\coordinate  (C1) at (intersection 2 of A and B);
\coordinate  (D1) at (intersection 1 of A and B);
 \tikzAngleOfLine(B)(D1){\AngleStart}
 \tikzAngleOfLine(B)(C1){\AngleEnd}
\fill[gray!20]%
   let \p1 = ($ (B) - (D1) $), \n2 = {veclen(\x1,\y1)}
   in
     (D1) arc (\AngleStart-360:\AngleEnd:\n2); 
\draw[<-]%
   let \p1 = ($ (B) - (D1) $), \n2 = {veclen(\x1,\y1)}
   in
     (B) ++(60:\n2) node[below]{\footnotesize\raisebox{-2.5ex}{$\varrho\qquad\qquad\qquad\ $}}
     (D1) arc (\AngleStart-360:\AngleEnd:\n2); 
\node [circle,minimum size=.5cm](A) at (2,2) {2};
\node [circle,minimum size=1.cm](B) at (2.5,2.5) {};
\coordinate  (C1) at (intersection 2 of A and B);
\coordinate  (D1) at (intersection 1 of A and B);
 \tikzAngleOfLine(B)(D1){\AngleStart}
 \tikzAngleOfLine(B)(C1){\AngleEnd}
\fill[gray!20]%
   let \p1 = ($ (B) - (D1) $), \n2 = {veclen(\x1,\y1)}
   in
     (D1) arc (\AngleStart-360:\AngleEnd:\n2); 
\draw[<-]%
   let \p1 = ($ (B) - (D1) $), \n2 = {veclen(\x1,\y1)}
   in
     (B) ++(60:\n2) node[below right]{\footnotesize\raisebox{-2.5ex}{$\ \ \eta$}}
     (D1) arc (\AngleStart-360:\AngleEnd:\n2); 
\node [circle,minimum size=.5cm](A) at (2,-2) {3};
\node [circle,minimum size=1.cm](B) at (2.5,-2.5) {};
\coordinate  (C1) at (intersection 2 of A and B);
\coordinate  (D1) at (intersection 1 of A and B);
 \tikzAngleOfLine(B)(D1){\AngleStart}
 \tikzAngleOfLine(B)(C1){\AngleEnd}
\fill[gray!20]%
   let \p1 = ($ (B) - (D1) $), \n2 = {veclen(\x1,\y1)}
   in
     (D1) arc (\AngleStart-360:\AngleEnd:\n2); 
\draw[<-]%
   let \p1 = ($ (B) - (D1) $), \n2 = {veclen(\x1,\y1)}
   in
     (B) ++(60:\n2) node[below right]{\footnotesize\raisebox{-2.5ex}{$\ \ \mu$}}
     (D1) arc (\AngleStart-360:\AngleEnd:\n2); 
\node [circle,minimum size=.5cm](A) at (-2,-2) {4};
\node [circle,minimum size=1.cm](B) at (-2.5,-2.5) {};
\coordinate  (C) at (intersection 2 of A and B);
\coordinate  (D) at (intersection 1 of A and B);
 \tikzAngleOfLine(B)(D){\AngleStart}
 \tikzAngleOfLine(B)(C){\AngleEnd}
\fill[gray!20]%
   let \p1 = ($ (B) - (D) $), \n2 = {veclen(\x1,\y1)}
   in
     (D) arc (\AngleStart-360:\AngleEnd:\n2); 
\draw[<-]%
   let \p1 = ($ (B) - (D) $), \n2 = {veclen(\x1,\y1)}
   in
     (B) ++(60:\n2) node[left]{\footnotesize\raisebox{-7ex}{$\xi\qquad\ $}}
     (D) arc (\AngleStart-360:\AngleEnd:\n2); 
\draw
(1) edge node[above]{\footnotesize$\alpha'$} (up)
(up) edge node[above]{\footnotesize$\alpha''$} (2)
(2) edge node[right]{\footnotesize$\beta'$} (right)
(right) edge node[right]{\footnotesize$\beta''$} (3)
(3) edge node[below]{\footnotesize$\gamma'$} (down)
(down) edge node[below]{\footnotesize$\gamma''$} (4)
(4) edge node[left]{\footnotesize$\sigma'$} (left)
(left) edge node[left]{\footnotesize$\sigma''$} (1)
(right) edge node[below left]{\footnotesize$\varepsilon_{\alpha}$} (up)
(down) edge node[above left]{\footnotesize$\varepsilon_{\beta}$} (right)
(left) edge node[above right]{\footnotesize$\varepsilon_{\gamma}$} (down)
(up) edge node[below right]{\footnotesize$\varepsilon_{\sigma}$} (left)
;
\end{tikzpicture}
\]
with  $f^\#$-orbits
$(\varrho)$,
$(\eta)$,
$(\mu)$,
$(\xi)$,
$(\alpha' \ \varepsilon_{\sigma} \ \sigma'')$,
$(\beta' \ \varepsilon_{\alpha} \ \alpha'')$,
$(\gamma' \ \varepsilon_{\beta} \ \beta'')$,
$(\sigma' \ \varepsilon_{\gamma} \ \gamma'')$.
Further, 
there are  two $g^\#$-orbits:
\begin{align*}
  \cO^\#(\alpha')  &= (\alpha' \ \alpha'' \ \eta\ \beta' \ \beta''\ \mu
                                \ \gamma' \ \gamma'' \ \xi\ \sigma' \ \sigma''\ \varrho) , 
                                \\
  \cO^\#(\varepsilon_{\alpha})  &= (\varepsilon_{\alpha}
                                \ \varepsilon_{\sigma} 
                                \ \varepsilon_{\gamma} 
                                \ \varepsilon_{\beta} 
                                ) .
\end{align*}
The weight function 
$m_{\bullet}^\#$
takes only value $1$,  
and the border function 
$b_{\bullet}^\#$
is 
$b_1^\# = b_1$,
$b_2^\# = b_2$,
$b_3^\# = b_3$,
$b_4^\# = b_4$.

(a) The relations from vertex 1 are
$$\rho^2= b_1B_{\rho}, \ \ \ B_{\rho} = B_{\alpha'}.
$$ There are analogous
relations from each of the vertices $2, 3, 4$.

(b) The relations from vertex $x_{\alpha}$ are 
$$B_{\varepsilon_{\sigma}} = B_{\alpha''}, \ \ \alpha''\beta'=0, \ \ \varepsilon_{\sigma}\sigma'' = 0.
$$
There are analogous relations from each of the vertices
$x_{\beta}, x_{\gamma}, x_{\sigma}$.

We observe now that 
$B(Q,f,m_{\bullet},b_{\bullet})$
is isomorphic to the idempotent algebra
$e^\# B(Q^\#,f^\#,m_{\bullet}^\#,b_{\bullet}^\#) e^\#$
where the idempotent $e^\#$ is
the sum of the primitive idempotents
at the vertices $1,2,3,4$.
Moreover, the algebras
$B(Q,f,m_{\bullet})$
and
$e^\# B(Q^\#,f^\#,m_{\bullet}^\#) e^\#$
are also isomorphic.
Finally, we note
that if $K$ has  characteristic $2$ and 
$b_{\bullet}^\# = b_{\bullet}$ is non-zero, 
then the algebras 
$B(Q^\#,f^\#,m_{\bullet}^\#,b_{\bullet}^\#)$
and
$B(Q^\#,f^\#,m_{\bullet}^\#)$
are not isomorphic.
\end{example}

\section{Proof of Theorem \ref{th:main4}}\label{sec:proof4}

We recall the definition of a weighted triangulation algebra. 
Let $(Q,f)$ be a triangulation quiver
with at least two vertices, and let $g$, $n_{\bullet}$ and $m_{\bullet}$ 
be defined as for
biserial quiver algebras.
The additional datum is a function
\[
  c_{\bullet} : \cO(g) \to K^* = K \setminus \{0\}
\]
which we call 
a \emph{parameter function} of $(Q,f)$.
We write briefly $m_{\alpha} = m_{\cO(\alpha)}$
and $c_{\alpha} = c_{\cO(\alpha)}$ for $\alpha \in Q_1$.
The parameter function $c_{\bullet}$ taking only value $1$
is said to be \emph{trivial}.
\emph{We assume that $m_{\alpha} n_{\alpha} \geq 3$
for any arrow $\alpha \in Q_1$.}
For any arrow $\alpha \in Q_1$,  define the path
\begin{align*}
  A_{\alpha} &= \Big( \alpha g(\alpha) \dots g^{n_{\alpha}-1}(\alpha)\Big)^{m_{\alpha}-1}
             \alpha g(\alpha) \dots g^{n_{\alpha}-2}(\alpha) , \mbox{ if } n_{\alpha} \geq 2, \\
  A_{\alpha} &= \alpha^{m_{\alpha}-1} , \mbox{ if } n_{\alpha} = 1 ,
\end{align*}
in $Q$ of length $m_{\alpha} n_{\alpha} - 1$. Then we have
\[
  A_{\alpha} g^{n_{\alpha}-1}(\alpha) =  B_{\alpha} = \Big( \alpha g(\alpha) \dots g^{n_{\alpha}-1}(\alpha)\Big)^{m_{\alpha}}
\]
of length $m_{\alpha} n_{\alpha}$.
Then, following \cite{ESk3},
we define  the bound quiver algebra
\[
  \Lambda(Q,f,m_{\bullet},c_{\bullet})
   = K Q / I (Q,f,m_{\bullet},c_{\bullet}),
\]
where $I (Q,f,m_{\bullet},c_{\bullet})$
is the admissible ideal in the path algebra $KQ$ of $Q$ over $K$
generated by the elements:
\begin{enumerate}[(1)]
 \item
  ${\alpha} f({\alpha}) - c_{\bar{\alpha}} A_{\bar{\alpha}}$,
  for all arrows $\alpha \in Q_1$,
 \item
  $\beta f(\beta) g(f(\beta))$,
  for all arrows $\beta \in Q_1$.
\end{enumerate}
The algebra $\Lambda:= \Lambda(Q,f,m_{\bullet},c_{\bullet})$ is called a
\emph{weighted triangulation algebra} of $(Q,f)$.
Moreover, if 
$(Q,f) = (Q(S,\vv{T}),f)$  
for a directed triangulated surface $(S,\vv{T})$,
then
$\Lambda$
is called a \emph{weighted surface algebra},
and if the surface and triangulation is important we denote 
the algebra by $\Lambda(S,\vv{T},m_{\bullet},c_{\bullet})$.

We note that the Gabriel quiver of $\Lambda$ is equal to  $Q$, 
this holds because
we assume 
$m_{\alpha} n_{\alpha} \geq 3$
for all arrows $\alpha \in Q_1$.

We have the following proposition
(see \cite[Proposition~5.8]{ESk3}).

\begin{proposition}
\label{prop:6.1}
Let $(Q,f)$ be a triangulation quiver,
$m_{\bullet}$ and $c_{\bullet}$
weight and parameter functions of $(Q,f)$.
Then $\Lambda = \Lambda(Q,f,m_{\bullet},c_{\bullet})$
is a finite-dimensional 
tame symmetric
algebra of dimension
$\sum_{\cO \in \cO(g)} m_{\cO} n_{\cO}^2$.
\end{proposition}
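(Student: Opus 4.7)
The approach I would take mirrors the proof of Proposition~\ref{prop:2.3} for biserial quiver algebras, adapted to the deformed degree-two relations (1). The key structural observation is that although $\alpha f(\alpha)$ is no longer zero in $\Lambda$, it equals $c_{\bar{\alpha}} A_{\bar{\alpha}}$, a path of length $m_{\bar{\alpha}} n_{\bar{\alpha}} - 1$ lying along the $g$-orbit of $\bar{\alpha}$; combined with the zero relations $\beta f(\beta) g(f(\beta)) = 0$ from (2), this keeps the path combinatorics under tight control.

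First, for each vertex $i \in Q_0$ with arrows $\alpha, \bar{\alpha}$ starting at $i$, I would show that $e_i \Lambda$ is spanned by $e_i$, the proper initial subwords of $B_{\alpha}$ and $B_{\bar{\alpha}}$, together with a socle generator, which one verifies to be $B_{\alpha}$ (and which is necessarily a scalar multiple of $B_{\bar{\alpha}}$ in $\Lambda$, since the socle of $e_i\Lambda$ is one-dimensional by symmetry). The argument for an arbitrary path $p$ starting at $i$ proceeds by induction on length: if $p$ is a subpath of some $B_{\gamma}$, we are done; otherwise the first deviation from the $g$-cycle produces a length-two factor $\beta f(\beta)$, which by (1) is rewritten as $c_{\bar{\beta}}$ times a path along $\cO(\bar{\beta})$, and any subsequent arrow creates a factor $\beta f(\beta) g(f(\beta))$ and kills $p$ by (2). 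This yields $\dim_K e_i\Lambda \leq m_{\alpha} n_{\alpha} + m_{\bar{\alpha}} n_{\bar{\alpha}}$, and summing gives the upper bound $\sum_{\cO \in \cO(g)} m_{\cO} n_{\cO}^2$ on $\dim_K \Lambda$.

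Next, linear independence of the spanning set must be verified to obtain the exact dimension. I would construct an explicit faithful representation of $\Lambda$ of the expected total dimension, as is done in \cite{ESk3}; this simultaneously confirms that $B_{\alpha}$ generates the simple socle of $e_i\Lambda$. A symmetrizing $K$-linear form $\varphi : \Lambda \to K$ is then defined by $\varphi(B_{\alpha})$ equal to an appropriate nonzero constant determined by the normalization of $c_{\bullet}$, for each $\alpha \in Q_1$, and $\varphi$ vanishing on all other basis paths. The identity $\varphi(ab) = \varphi(ba)$ reduces to the case where $ab$ is some $B_{\alpha}$, in which case $ba$ is a cyclic rotation and hence equals a maximal cyclic word $B_{\alpha'}$ with $\varphi$-value determined consistently by the scaling on $\cO(\alpha) = \cO(\alpha')$.

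For tameness, I would invoke the module-theoretic analysis from \cite{ESk3}, which adapts string-and-band techniques to the present deformed setting. The main obstacle, as I see it, is precisely this last step: the finite-dimensionality, dimension formula, and symmetric structure all follow cleanly from the combinatorial and bilinear-form arguments above, but tameness requires genuine representation-theoretic input, namely the explicit construction of string and band modules compatible with the deformed relations (1) and a proof that these exhaust the indecomposables. The other technical nuisance is the consistent scaling of the parameters $c_{\bullet}$ across a $g$-orbit when defining $\varphi$, which must be handled with care to ensure that the symmetrizing form is well-defined.
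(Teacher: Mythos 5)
The paper does not actually prove this proposition: it is quoted verbatim from \cite{ESk3} (Proposition 5.8 there), so there is no in-text argument to compare yours against line by line. Your treatment of finite-dimensionality, the basis of $e_i\Lambda$ by initial subwords of $B_{\alpha}$ and $B_{\bar{\alpha}}$, the dimension count, and the symmetrizing form is the right adaptation of Proposition~\ref{prop:2.3}; the one caveat there is that you should derive the relation $c_{\alpha}B_{\alpha}=c_{\bar{\alpha}}B_{\bar{\alpha}}$ directly from the defining relations (1) rather than from one-dimensionality of the socle, since the latter presupposes the symmetry you are in the process of establishing, and the normalization of $\varphi$ across a $g$-orbit has to absorb the constants $c_{\bullet}$ exactly as you anticipate.

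Where your proposal diverges substantively is the tameness step. You propose adapting string-and-band techniques to $\Lambda$ itself, but $\Lambda$ is not special biserial (the relations $\alpha f(\alpha)=c_{\bar{\alpha}}A_{\bar{\alpha}}$ are commutativity relations, not monomial ones), so a direct classification of indecomposables by strings and bands is not available and is not what \cite{ESk3} does. The argument there is by degeneration: $\Lambda(Q,f,m_{\bullet},c_{\bullet})$ degenerates to the biserial triangulation algebra $B(Q,f,m_{\bullet})$ (this is exactly the content of Remark~\ref{rem:6.4} in the present paper, which again cites Proposition 5.8 of \cite{ESk3}), the latter is special biserial and hence tame by Proposition~\ref{prop:2.1}, and tameness is inherited from a degeneration by the theorem of Crawley-Boevey and Geiss. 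This replaces the ``genuine representation-theoretic input'' you flag as the main obstacle with a short geometric argument, and it is the route you should take; as written, your tameness step is a gap.
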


We have also the following theorem
proved in \cite[Theorem~1.2]{ESk3}
(see also \cite[Proposition~7.1]{BES} and \cite[Theorem~5.9]{ESk2}
for the case of two vertices).

\begin{theorem}
\label{th:6.2}
Let $\Lambda = \Lambda(S,\vv{T},m_{\bullet},c_{\bullet})$
be a weighted surface algebra over an algebraically
closed field $K$.
Then the following statements are equivalent:
\begin{enumerate}[(i)]
 \item
  All simple modules in $\mod \Lambda$ are periodic of period $4$.
 \item
  $\Lambda$ is a periodic algebra of period $4$.
 \item
  $\Lambda$ is not isomorphic to a singular tetrahedral algebra.
\end{enumerate}
\end{theorem}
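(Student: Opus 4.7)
The plan is to establish the cyclic chain of implications (ii)$\Rightarrow$(i)$\Rightarrow$(iii)$\Rightarrow$(ii). The easy step is (ii)$\Rightarrow$(i): by Proposition~\ref{prop:6.1} the algebra $\Lambda$ is self-injective, so $\Omega_\Lambda$ acts as a self-equivalence of the stable category $\umod \Lambda$. If $\Omega_{\Lambda^e}^4(\Lambda)\cong \Lambda$ in $\umod \Lambda^e$, then for any non-projective $M\in\mod\Lambda$ one has $\Omega_\Lambda^4(M)\cong \Omega_{\Lambda^e}^4(\Lambda)\otimes_\Lambda M\cong M$ in $\umod\Lambda$. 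Our assumption $m_\alpha n_\alpha\ge 3$ guarantees that simple modules are never projective, hence every simple module is periodic of period dividing~$4$; the slight strengthening to genuine period $4$ (which is what is claimed) is then read off by inspecting $\Omega_\Lambda S_i$ and $\Omega_\Lambda^2 S_i$ from the special biserial shape of $\rad P_i$, which is never again simple in a $2$-regular triangulation quiver with at least two vertices.

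For (i)$\Rightarrow$(iii) I would argue by contraposition, exhibiting in the singular tetrahedral algebra a simple module whose $\Omega_\Lambda$-orbit does not have length $4$. Since the tetrahedral triangulation quiver is fully explicit (it appears in Remark~\ref{rem:4.4} and is recalled in \cite{ESk3}), this is a finite calculation: compute $P_i$ for each vertex, determine the kernel of $P_i\twoheadrightarrow S_i$ using the defining relations $\alpha f(\alpha)=c_{\bar\alpha}A_{\bar\alpha}$ and $\beta f(\beta)g(f(\beta))=0$, and iterate the process. The singular parameter specialisation is precisely the locus where the "period~$4$" cancellation on the syzygy side breaks, and the computation shows at least one simple module has its syzygy sequence stabilise at a different length (or fails to return to itself at step~$4$).

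The substantive content is (iii)$\Rightarrow$(ii). Assuming $\Lambda$ is not singular tetrahedral, I would construct an explicit minimal projective bimodule resolution
\[
 0\to \Lambda\to P_3\to P_2\to P_1\to P_0\to \Lambda\to 0
\]
with $P_0=\bigoplus_{i\in Q_0}\Lambda e_i\otimes_K e_i\Lambda$, $P_1=\bigoplus_{\alpha\in Q_1}\Lambda e_{s(\alpha)}\otimes_K e_{t(\alpha)}\Lambda$, and $P_2$, $P_3$ indexed respectively by the two families of generating relations of $I(Q,f,m_\bullet,c_\bullet)$ listed in the definition of $\Lambda$. The differentials $d_1$, $d_2$, $d_3$ are the Koszul-style derivations of the arrows and of the relations (degree~$2$ and degree~$3$ elements of $KQ$). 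The triangulation condition $f^3=\id$, combined with the symmetric structure of $\Lambda$ and the pairing between $\alpha f(\alpha)$-type relations and $\beta f(\beta)g(f(\beta))$-type relations, then forces a natural duality $P_{4-i}\cong D(P_i)$, which lifts to an isomorphism $\Omega_{\Lambda^e}^4(\Lambda)\cong{}_1\Lambda_\nu$ with $\nu$ the Nakayama automorphism; because $\Lambda$ is symmetric (Proposition~\ref{prop:6.1}), $\nu$ is inner and $\Omega_{\Lambda^e}^4(\Lambda)\cong \Lambda$.

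The main obstacle is the exactness of the proposed bimodule resolution. Verifying exactness at $P_1$ and $P_2$ is a bookkeeping task using the explicit $K$-basis of $\Lambda$ (paths $e_i$ and initial subwords of $B_\alpha$, $B_{\bar\alpha}$ around each vertex, as in the proof of Proposition~\ref{prop:2.3} for the biserial analogue); but exactness at $P_3$ and the identification of the fourth kernel with $\Lambda$ itself is where the singular tetrahedral exclusion is genuinely needed. In that exceptional case the relations collapse so that a certain $g$-orbit contribution to $d_3$ becomes degenerate and the kernel of $d_3$ is strictly larger than $\Lambda$; ruling this out under hypothesis (iii) is what makes the implication non-trivial, and it is precisely the step that was carried out case by case for two and three vertices in \cite{BES,ESk2,ESk3}, and which I would extend to the general triangulation quiver by a uniform argument based on the combinatorics of $(Q,f)$ developed in Section~\ref{sec:bisweight}.
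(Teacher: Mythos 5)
First, for calibration: this paper does not actually prove Theorem~\ref{th:6.2} --- it imports it from \cite[Theorem~1.2]{ESk3} (with \cite[Proposition~7.1]{BES} and \cite[Theorem~5.9]{ESk2} for the two-vertex case), so your proposal can only be measured against that external proof. There the hard work is done on the level of one-sided modules: one computes explicit minimal projective resolutions of the simple modules and shows $\Omega_\Lambda^4(S_i)\cong S_i$ for all $i$ whenever $\Lambda$ is not singular tetrahedral, then deduces that $\Lambda$ is a periodic algebra from the criterion of Green, Snashall and Solberg (a connected self-injective algebra all of whose simple modules are $\Omega$-periodic is periodic as a bimodule), with an additional argument pinning the period to $4$; the singular tetrahedral algebra is excluded by a direct syzygy computation, essentially as you propose for (i)$\Rightarrow$(iii). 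Your steps (ii)$\Rightarrow$(i) and (i)$\Rightarrow$(iii) are sound in outline, with one slip: $\Lambda$ is \emph{not} special biserial (that is its degeneration $B(Q,f,m_{\bullet})$), so the facts $\Omega_\Lambda S_i\not\cong S_i$ and $\Omega_\Lambda^2 S_i\not\cong S_i$ needed for period exactly $4$ must be read off from the relations $\alpha f(\alpha)=c_{\bar{\alpha}}A_{\bar{\alpha}}$ together with $m_\alpha n_\alpha\geq 3$, not from a string-algebra structure of $\rad P_i$.

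The genuine gap is in (iii)$\Rightarrow$(ii), and it is structural, not merely deferred bookkeeping. You index $P_3$ by the zero relations $\beta f(\beta)g(f(\beta))$, one per arrow, so $P_3$ has $|Q_1|=2|Q_0|$ indecomposable summands, the same number as your $P_2$. This is incompatible with exactness of $0\to\Lambda\to P_3\to P_2\to P_1\to P_0\to\Lambda\to 0$: each $P_k$ is projective as a left module, so applying $S_i\otimes_\Lambda(-)$ yields an exact length-$4$ resolution of $S_i$ ending in $S_i$; but since simple modules are one-dimensional, $\Omega^3(S_i)$ (being a syzygy with $\Omega(\Omega^3 S_i)\cong S_i$) has indecomposable projective cover, so the minimal resolution of $S_i$ has summand pattern $1,2,2,1$, and your complex produces $1,2,2,2$ with no room for a contractible summand to cancel the excess (your level-$2$ term is already minimal). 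The correct $P_3$ must have exactly $|Q_0|$ summands, one per vertex, encoding the socle identities $B_\alpha=B_{\bar{\alpha}}$; correspondingly, the type-(2) monomials are not minimal relations (one has $\dim_K\Ext^2_\Lambda(S_i,\Lambda/\rad\Lambda)=2$, accounted for entirely by the type-(1) relations) but surface only in the third syzygy. Note also that your differentials are not literally Koszul-style: $\alpha f(\alpha)-c_{\bar{\alpha}}A_{\bar{\alpha}}$ is inhomogeneous of top degree $m_{\bar{\alpha}}n_{\bar{\alpha}}-1$, and the quadratic/cubic picture you invoke is accurate precisely in the excluded singular tetrahedral case. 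Finally, the one place where hypothesis (iii) must actually enter --- exactness at $P_3$ and triviality of the twist in $\Omega^4_{\Lambda^e}(\Lambda)\cong{}_1\Lambda_\sigma$ --- is exactly what you leave open; saying the singular specialisation is ``where the cancellation breaks'' restates the theorem rather than proving it. As written, this is a research program whose endgame is missing; the shortest complete route is the cited one through resolutions of the simple modules plus the Green--Snashall--Solberg criterion.
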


Following \cite{ESk3},
a singular  tetrahedral algebra is 
the weighted surface algebra
given by a coherent orientation of 
four triangles of the tetrahedron and the 
weight and parameter functions taking only value $1$.
The triangulation quiver of such algebra  is the 
\emph{tetrahedral quiver} of the form
\[
\begin{tikzpicture}
[->,scale=.85]
\node (1) at (0,1.72) {$1$};
\node (2) at (0,-1.72) {$2$};
\node (3) at (2,-1.72) {$3$};
\node (4) at (-1,0) {$4$};
\node (5) at (1,0) {$5$};
\node (6) at (-2,-1.72) {$6$};
\coordinate (1) at (0,1.72);
\coordinate (2) at (0,-1.72);
\coordinate (3) at (2,-1.72);
\coordinate (4) at (-1,0);
\coordinate (5) at (1,0);
\coordinate (6) at (-2,-1.72);
\fill[fill=gray!20]
      (0,2.22cm) arc [start angle=90, delta angle=-360, x radius=4cm, y radius=2.8cm]
 --  (0,1.72cm) arc [start angle=90, delta angle=360, radius=2.3cm]
     -- cycle;
\fill[fill=gray!20]
    (1) -- (4) -- (5) -- cycle;
\fill[fill=gray!20]
    (2) -- (4) -- (6) -- cycle;
\fill[fill=gray!20]
    (2) -- (3) -- (5) -- cycle;

\node (1) at (0,1.72) {$1$};
\node (2) at (0,-1.72) {$2$};
\node (3) at (2,-1.72) {$3$};
\node (4) at (-1,0) {$4$};
\node (5) at (1,0) {$5$};
\node (6) at (-2,-1.72) {$6$};
\draw (-.23,1.7) arc [start angle=96, delta angle=108, radius=2.3cm] node[midway,right] {$\nu$};
\draw (-1.87,-1.93) arc [start angle=-144, delta angle=108, radius=2.3cm] node[midway,above] {$\mu$};
\draw (2.11,-1.52) arc [start angle=-24, delta angle=108, radius=2.3cm] node[midway,left] {$\alpha$};
\draw
(1) edge node [right] {$\delta$} (5)
(2) edge node [left] {$\varepsilon$} (5)
(2) edge node [below] {$\varrho$} (6)
(3) edge node [below] {$\sigma$} (2)
(4) edge node [left] {$\gamma$} (1)
(4) edge node [right] {$\beta$} (2)
(5) edge node [right] {$\xi$} (3)
(5) edge node [below] {$\eta$} (4)
(6) edge node [left] {$\omega$} (4)
;
\end{tikzpicture}
\]
where
the shaded triangles denote $f$-orbits 
and white triangles denote $g$-orbits.

The following theorem is an essential ingredient
for the proof of Theorem~\ref{th:main4}.

\begin{theorem}
\label{th:6.3}
Let 
$B = B(Q,f,m_{\bullet})$ be a biserial weighted triangulation algebra
where  $Q$ has no loops, and
$\Lambda^* = \Lambda(Q^*,f^*,m^*_{\bullet},c^*_{\bullet})$
the weighted triangulation algebra 
associated to the weighted triangulation quiver
$(Q^*,f^*,m^*_{\bullet})$ and
the trivial parameter function  $c^*_{\bullet}$
of $(Q^*,f^*)$.
Then the following statements hold:
\begin{enumerate}[(i)]
 \item
  $\Lambda^*$ is a periodic algebra of period $4$.
  \item
    $B$ is isomorphic to the idempotent algebra $e^{*} \Lambda^{*} e^{*}$
    for an idempotent $e^{*}$ of $\Lambda^{*}$.    
\end{enumerate}
\end{theorem}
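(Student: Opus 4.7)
For part~(i), the plan is to reduce to the characterization in Theorem~\ref{th:6.2}. Since $(Q^*,f^*)$ is a triangulation quiver with at least two vertices (the $*$-construction strictly enlarges the vertex set of $Q$), Theorem~\ref{th:3.1} produces a directed triangulated surface $(S^*,\vv{T^*})$ with $(Q^*,f^*)=(Q(S^*,\vv{T^*}),f^*)$, exhibiting $\Lambda^*$ as a weighted surface algebra. By Theorem~\ref{th:6.2} it then suffices to show that $\Lambda^*$ is not a singular tetrahedral algebra. The tetrahedral quiver has every $g$-orbit of length~$3$, an odd number, but by Remark~\ref{rem:4.4} every $g^*$-orbit $\cO^*(\alpha')$ in $Q^*$ has even length $2n_\alpha$. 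So $(Q^*,f^*)$ cannot be the tetrahedral quiver, and $\Lambda^*$ is periodic of period~$4$.

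For part~(ii), I would follow the template used in Theorem~\ref{th:4.1}~(i) and Proposition~\ref{prop:2.7}, but with the weighted triangulation algebra $\Lambda^*$ in place of a biserial quiver algebra. Set $e^*:=\sum_{i\in Q_0} e_i^*$, the sum of the primitive idempotents of $\Lambda^*$ at the ``old'' vertices of $Q$, and consider the $K$-algebra map $\varphi\colon KQ\to e^*\Lambda^* e^*$ defined by $e_i\mapsto e_i^*$ and $\alpha\mapsto \alpha'\alpha''$ for each $\alpha\in Q_1$. The task is to verify that $\varphi$ factors through an isomorphism $B(Q,f,m_\bullet)\to e^*\Lambda^* e^*$. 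To see that $\varphi$ kills the defining relations of $B$: the image of $\alpha f(\alpha)$ is $\alpha'\alpha'' f(\alpha)'f(\alpha)''$, and the middle segment $\alpha'' f(\alpha)' f(\alpha)''$ is of the form $\beta f^*(\beta)g^*(f^*(\beta))$ with $\beta=\alpha''$, hence vanishes by relation~(2) of $\Lambda^*$; the image of $B_\alpha-B_{\bar\alpha}$ becomes $B^*_{\alpha'}-B^*_{\bar\alpha'}$, which lies in the $1$-dimensional socle of $e_{s(\alpha)}^*\Lambda^*$, and the triangulation identity $(f^*)^3=\operatorname{id}$ together with the trivial parameter function $c^*_\bullet$ and relation~(1) of $\Lambda^*$ forces $B^*_{\alpha'}=B^*_{\bar\alpha'}$.

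Surjectivity of $\varphi$ would then be established by reducing any nonzero path in $Q^*$ between two vertices of $Q_0$ to a product of the elementary segments $\alpha'\alpha''$: a detour through a single $\varepsilon$-arrow is absorbed by relation~(1) of $\Lambda^*$ (which, for example, identifies $\alpha'\varepsilon_{f^{-1}(\alpha)}$ with the initial segment $A^*_{\bar\alpha'}$ of a cycle, so that $\alpha'\varepsilon_{f^{-1}(\alpha)}f^{-1}(\alpha)''=B^*_{\bar\alpha'}$), while any path containing two consecutive $\varepsilon$-arrows vanishes by relation~(2). Injectivity is then confirmed by a dimension count: using the standard basis of $\Lambda^*$ by initial subpaths of the cycles $B^*_\beta$, the space $e_i^*\Lambda^* e^*$ has dimension $m_\alpha n_\alpha + m_{\bar\alpha}n_{\bar\alpha}$ for $i\in Q_0$ with $\alpha,\bar\alpha$ the two arrows of $Q$ starting at $i$, and summing over $Q_0$ yields $\sum_{\cO\in\cO(g)} m_\cO n_\cO^2 = \dim_K B$ by Proposition~\ref{prop:2.3}. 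The main obstacle I anticipate is this surjectivity step: the detours through the auxiliary vertices $x_\alpha$ and the $\varepsilon$-arrows produce a tangle of paths that must be reduced in a controlled way using the interplay of relations~(1) and~(2) of $\Lambda^*$.
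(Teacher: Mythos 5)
Your route coincides with the paper's own proof in both parts: for (i) you reduce to Theorem~\ref{th:6.2} and rule out the tetrahedral quiver via the even length of the $g^*$-orbits $\cO^*(\alpha')$ (Remark~\ref{rem:4.4}), and for (ii) you take $e^*=\sum_{i\in Q_0}e_i^*$, send $\alpha\mapsto\alpha'\alpha''$, kill $\alpha f(\alpha)$ by recognizing $\alpha'' f^*(\alpha'')g^*(f^*(\alpha''))$ inside its image, and use $B^*_{\alpha'}=B^*_{\bar{\alpha}'}$ -- exactly the computations in the paper, which then concludes by the same dimension considerations you make explicit.

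There is, however, one genuine omission, and it is betrayed by the fact that your argument never uses the hypothesis that $Q$ has no loops. Before Theorem~\ref{th:6.2} (indeed, before the symbol $\Lambda^*$ itself) is available, one must verify the standing assumption in the definition of a weighted triangulation algebra, namely $m^*_{\varrho}n^*_{\varrho}\geq 3$ for every arrow $\varrho$ of $Q^*$. This is precisely where ``no loops'' enters: it forces every $f$-orbit of $Q$ to have length $3$ and every $g$-orbit to have length at least $2$, whence $\cO^*(\varepsilon_\alpha)$ has length $3$ and $\cO^*(\alpha')$ has length $2n_\alpha\geq 4$. If $Q$ had a loop fixed by $f$, then $\cO^*(\varepsilon_\alpha)$ would be a single loop of weight $1$ and $\Lambda^*$ would not be defined; this check is the first step of the paper's proof and should be the first step of yours. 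Two smaller remarks on (ii): surjectivity is immediate from the basis of $\Lambda^*$ by initial subwords of the cycles $B^*_{\beta}$ (the subwords of $B^*_{\alpha'}$, $B^*_{\bar{\alpha}'}$ of even length are exactly the elements of $e_i^*\Lambda^*e^*$ and are visibly products of the segments $\gamma'\gamma''$), so the path-reduction you flag as the main obstacle can be bypassed entirely; and your assertion that two consecutive $\varepsilon$-arrows vanish needs the qualification that the bare product $\varepsilon_{f(\alpha)}\varepsilon_{\alpha}$ is a nonzero basis element of $\Lambda^*$ -- it only dies once preceded by the arrow $\beta'$, via the relation $\beta' f^*(\beta')g^*(f^*(\beta'))=0$.
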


\begin{proof}
For each arrow $\varrho$ in $Q^*_1$, we set
$m^*_{\varrho} = m^*_{\cO(\varrho)}$ and
$n^*_{\varrho} = |\cO^*(\varrho)|$.
We observe first that $n^*_{\varrho} \geq 3$, and hence
$m^*_{\varrho} n^*_{\varrho} \geq 3$,
for any arrow $\varrho$ in $Q^*_1$,
and consequently 
$\Lambda(Q^*,f^*,m^*_{\bullet},c^*_{\bullet})$
is a well defined weighted triangulation algebra.
Indeed, it follows from 
Theorem~\ref{th:4.1}(iii)
that 
the triangulation quiver $(Q^*,f^*)$
has neither loops nor  self-folded triangles.
Moreover, the $f$-orbits in $Q_1$ have length $3$, 
and the $g$-orbits in $Q_1$ are of length at least $2$.
Then it follows from the proof of Theorem~\ref{th:4.1}
that the
 $g^*$-orbits in $Q_1^*$ are
\begin{align*}
  \cO^*(\alpha') & = \big(\alpha' \ \alpha'' \ g(\alpha)' \ g(\alpha)'' \ \dots 
                                \ g^{n_{\alpha}-1}(\alpha)' \ g^{n_{\alpha}-1}(\alpha)''\big) , \\
  \cO^*(\varepsilon_{\alpha}) & = \big(\varepsilon_{f^{2}(\alpha)} 
                                \ \varepsilon_{f(\alpha)} \ \varepsilon_{\alpha}\big) , 
\end{align*}
for all arrows $\alpha \in Q_1$.
Then the required inequalities hold.
Further, it follows from Remark~\ref{rem:4.4} that $(Q^*,f^*)$
is not the tetrahedral quiver.
Then, applying Theorem~\ref{th:6.2}, we conclude that
$\Lambda^*$ is a periodic algebra of period $4$.

Let $e^{*}$ be the sum of all primitive idempotents in $\Lambda^{*}$
corresponding to the vertices of $Q$.
We claim that $e^{*} \Lambda^{*} e^{*}$ is isomorphic to $B$.
Observe that every $f$-orbit $(\alpha \ \beta \ \gamma)$ in $Q_1$
%
creates in $(Q^*,f^*)$ the subquiver
as in the illustration (3) following Theorem \ref{th:4.1}.
%
%
The algebra
$e^{*} \Lambda^{*} e^{*}$ has arrows 
$\alpha = \alpha' \alpha''$,
$\beta = \beta' \beta''$,
$\gamma = \gamma' \gamma''$,
and it follows that in $e^{*}\Lambda^{*}e^{*}$ we have
\begin{align*}
 \alpha \beta & = \alpha' \alpha'' \beta' \beta'' 
   = \alpha' \alpha'' f^*(\alpha'') g^*\big(f^*(\alpha'')\big) = 0, \\
 \beta \gamma & = \beta' \beta''  \gamma' \gamma''
   = \beta' \beta'' f^*(\beta'') g^*\big(f^*(\beta'')\big) = 0, \\
 \gamma \alpha & = \gamma' \gamma'' \alpha' \alpha'' 
   = \gamma' \gamma'' f^*(\gamma'') g^*\big(f^*(\gamma'')\big) = 0 .
\end{align*}
Further, 
let $i$ be a vertex of $Q$,  and let  $\alpha$ and $\sigma = \bar{\alpha}$
the two arrows in $Q_1$ with source $i$.
By the proof of Theorem~\ref{th:4.1}, the 
$g^*$-orbits are
\begin{align*}
  \cO^*(\alpha') & = \big(\alpha' \ \alpha'' \ g(\alpha)' \ g(\alpha)'' \ \dots 
                                \ g^{n_{\alpha}-1}(\alpha)' \ g^{n_{\alpha}-1}(\alpha)''\big) , \\
  \cO^*(\sigma') & = \big(\sigma' \ \sigma'' \ g(\sigma)' \ g(\sigma)'' \ \dots 
                                \ g^{n_{\sigma}-1}(\sigma)' \ g^{n_{\sigma}-1}(\sigma)''\big) .
\end{align*}
Moreover, 
$m_{\alpha'}^* = m_{\cO^*(\alpha')}^* = m_{\cO(\alpha)} = m_{\alpha}$
and
$m_{\sigma'}^* = m_{\cO^*(\sigma')}^* = m_{\cO(\sigma)} = m_{\sigma}$.
Hence we have in $Q^*$ the cycles 
\begin{align*}
  B_{\alpha'} & = \big(\alpha' \alpha'' g(\alpha)' g(\alpha)'' \dots 
                                g^{n_{\alpha}-1}(\alpha)' g^{n_{\alpha}-1}(\alpha)''\big)^{m_{\alpha}} , \\
  B_{\sigma'} & = \big(\sigma' \sigma'' g(\sigma)' \ g(\sigma)'' \dots 
                                g^{n_{\sigma}-1}(\sigma)' g^{n_{\sigma}-1}(\sigma)''\big)^{m_{\sigma}} ,
\end{align*}
and $B_{\alpha'}  = B_{\sigma'}$ in $\Lambda^*$
(see \cite[Lemma~5.3]{ESk3}),
and this gives the equality 
$B_{\alpha}  = B_{\sigma} = B_{\bar{\alpha}}$
in $e^{*} \Lambda^{*} e^{*}$.
Therefore, $e^{*} \Lambda^{*} e^{*}$ is isomorphic to $B$.
\end{proof}

We may now complete the proof of Theorem~\ref{th:main4}.
Let $B = B(Q,f,m_{\bullet})$ be a biserial quiver algebra.
Then it follows from Theorem~\ref{th:4.1} that
$B$ is isomorphic to the idempotent algebra $e^{**}B^{**}e^{**}$
of the biserial triangulation algebra
$B^{**} = B(Q^{**},f^{**},m_{\bullet}^{**})$ for some idempotent
$e^{**}$ of $B^{*}$, and $Q^{**}$ has no loops.
Applying now Theorem~\ref{th:6.3} we conclude that
$B^{**}$ is isomorphic to the idempotent algebra
$e \Lambda e$ of a periodic weighted triangulation algebra,
for an idempotent $e$ of $\Lambda$.
Since $e^{**}$ is a summand of $e$, we have
$B \cong e^{**}B^{**}e^{**} \cong e^{**}(e \Lambda e)e^{**} = e^{**} \Lambda e^{**}$.
Then Theorem~\ref{th:main4} follows from 
Theorems \ref{th:2.6} and \ref{th:3.1}.

\begin{remark}
\label{rem:6.4}
Let $\Lambda = \Lambda(Q,f,m_{\bullet},c_{\bullet})$
be a weighted triangulation algebra.
Then the biserial  triangulation algebra
$B = B(Q,f,m_{\bullet})$
is not an idempotent algebra $e \Lambda e$ of $\Lambda$.
On the other hand, 
if $\Lambda$ is not a tetrahedral algebra, then $B$ is
a geometric degeneration of $\Lambda$
(see \cite[Proposition~5.8]{ESk3}).
\end{remark}

\begin{example}
\label{ex:6.5}
Let $(Q,f)$ be the Markov quiver in Example \ref{ex:3.5}
and $m$ a positive integer associated
to the unique $g$-orbit
$(\alpha_1\, \beta_2\, \alpha_3\, \beta_1\, \alpha_2\, \beta_3)$ in $Q_1$.
Then the
associated weighted triangulation algebra 
$\Lambda = \Lambda(Q,f,m_{\bullet},c_{\bullet})$
with  trivial  parameter function  $c^*_{\bullet}$
is given by the quiver $Q$ and the following relations 
(we write the indices modulo 3):
\begin{align*}
 \alpha_i \alpha_{i+1}  &= (\beta_i \alpha_{i+1} \beta_{i+2}\alpha_i \beta_{i+1} \alpha_{i+2})^{m-1}  \beta_i \alpha_{i+1} \beta_{i+2} \alpha_i\beta_{i+1},
 &
 \alpha_i \alpha_{i+1} \beta_{i+2} &= 0,
\\
 \beta_i \beta_{i+1} &= (\alpha_i \beta_{i+1} \alpha_{i+2} \beta_i \alpha_{i+1} \beta_{i+2})^{m-1}  \alpha_i \beta_{i+1} \alpha_{i+2} \beta_i \alpha_{i+1},
 &
 \beta_i \beta_{i+1} \alpha_{i+2} &= 0.
\end{align*}
The  idempotent algebra $e_1 \Lambda e_1$
of $\Lambda$ with respect to the primitive idempotent $e_1$
at  vertex $1$ is isomorphic to the Brauer graph algebra
$B_{\Gamma}$ given by the Brauer graph $\Gamma$ in Example \ref{ex:4.5}. 

According to Theorem~\ref{th:4.1}
we have the triangulation quiver $(Q^*,f^*)$
\[
\begin{tikzpicture}
[->,scale=.85]
\coordinate (1) at (0,2.29);
\coordinate (3) at (2,-1.15);
\coordinate (6) at (-2,-1.15);
\coordinate (1b) at (0,4.58);
\coordinate (2b) at (0,-2.3);
\coordinate (3b) at (4,-2.3);
\coordinate (4b) at (-2,1.14);
\coordinate (5b) at (2,1.14);
\coordinate (6b) at (-4,-2.3);
\fill[fill=gray!20]
      (1b) arc [start angle=90, delta angle=-360, radius=4.6cm]
 --  (6b) -- (3b)  -- cycle;
\fill[fill=gray!20]
    (1) -- (5b) -- (3) -- cycle;
\fill[fill=gray!20]
    (3) -- (2b) -- (6) -- cycle;
\fill[fill=gray!20]
    (6) -- (4b) -- (1) -- cycle;

\node (1) at (0,2.29) {$x_{\alpha_1}$};
\node (3) at (2,-1.15) {$x_{\alpha_2}$};
\node (6) at (-2,-1.15) {$x_{\alpha_3}$};
\node (1b) at (0,4.58) {$x_{\beta_1}$};
\node (2b) at (0,-2.3) {$3$};
\node (3b) at (4,-2.3) {$x_{\beta_2}$};
\node (4b) at (-2,1.14) {$1$};
\node (5b) at (2,1.14) {$2$};
\node (6b) at (-4,-2.3) {$x_{\beta_3}$};
\draw (-.46,4.54) arc [start angle=96, delta angle=108, radius=4.6cm] node[midway,left] {$\varepsilon_{\beta_3}$};
\draw (-3.74,-2.72) arc [start angle=-144, delta angle=108, radius=4.6cm] node[midway,below] {$\varepsilon_{\beta_2}$};
\draw (4.22,-1.9) arc [start angle=-24, delta angle=108, radius=4.6cm] node[midway,right] {$\varepsilon_{\beta_1}$};
\draw
(1b) edge node [right] {$\beta_1''$} (5b)
(2b) edge node [below] {$\beta_3'$} (6b)
(3b) edge node [below] {$\beta_2''$} (2b)
(4b) edge node [left] {$\beta_1'$} (1b)
(5b) edge node [right] {$\beta_2'$} (3b)
(6b) edge node [left] {$\beta_3''$} (4b)
(1) edge node [right] {$\varepsilon_{\alpha_3}$} (6)
(6) edge node [above] {$\varepsilon_{\alpha_2}$} (3)
(3) edge node [left] {$\varepsilon_{\alpha_1}$} (1)
(4b) edge node [above] {$\alpha_1'$} (1)
(1) edge node [above] {$\alpha_1''$} (5b)
(5b) edge node [right] {$\!\alpha_2'$} (3)
(3) edge node [right] {$\alpha_2''$} (2b)
(2b) edge node [left] {$\!\!\alpha_3'\ $} (6)
(6) edge node [left] {$\alpha_3''\!$} (4b)
;
\end{tikzpicture}
\]
where the shaded triangles denote the $f^*$-orbits in $Q_1^*$.
The $g^*$-orbits in $Q^*_1$ are
\begin{gather*}
  \cO^*(\alpha_1')  = (\alpha_1' \ \alpha_1'' \ \beta_2' \ \beta_2''
                                \ \alpha_3' \ \alpha_3'' \ \beta_1' \ \beta_1''
                                \ \alpha_2' \ \alpha_2'' \ \beta_3' \ \beta_3'') , 
                                \\
  \cO^*(\varepsilon_{\alpha_1})  = (\varepsilon_{\alpha_3}
                                \ \varepsilon_{\alpha_2} 
                                \ \varepsilon_{\alpha_1} 
                                ) ,
 \qquad
 \qquad
  \cO^*(\varepsilon_{\beta_1})  = (\varepsilon_{\beta_3}
                                \ \varepsilon_{\beta_2} 
                                \ \varepsilon_{\beta_1} 
                                ) .
\end{gather*}
The weight function $m_{\bullet}^* : \cO(g^*) \to \bN^*$
is given by
$m_{\cO^*(\alpha_1')}^* = m$,
$m_{\cO^*(\varepsilon_{\alpha_1})}^* = 1$,
$m_{\cO^*(\varepsilon_{\beta_1})}^* = 1$.
We define the parameter function $c_{\bullet}^* : \cO(g^*) \to K^*$
to be the constant  function with value $1$.
The weighted triangulation algebra 
$\Lambda^* = \Lambda(Q^*,f^*,m^*_{\bullet},c^*_{\bullet})$
is given by the above quiver $Q^*$ and with 
$18$ commutativity relations
and
$18$ zero-relations,
corresponding to
the six $f^*$-orbits in $Q_1^*$.
For example, we have the relations given 
by the  $f^*$-orbit
$(\alpha_1' \ \varepsilon_{\alpha_3} \ \alpha_2')$:
\begin{align*}
 \alpha_1' \varepsilon_{\alpha_3} 
 &= (
    \beta_1' \beta_1''
    \alpha_2' \alpha_2''
    \beta_3' \ \beta_3''
    \alpha_1' \alpha_1'' 
    \beta_2' \beta_2''
    \alpha_3' \alpha_3'' 
   )^{m-1}
    \beta_1' \beta_1''
    \alpha_2' \alpha_2''
    \beta_3' \ \beta_3''
    \alpha_1' \alpha_1'' 
    \beta_2' \beta_2''
    \alpha_3' 
,
 \\
  \varepsilon_{\alpha_3} \alpha_3'' 
 &= (
    \alpha_1'' 
    \beta_2' \beta_2''
    \alpha_3' \alpha_3'' 
    \beta_1' \beta_1''
    \alpha_2' \alpha_2''
    \beta_3' \ \beta_3''
    \alpha_1' 
   )^{m-1}
    \alpha_1'' 
    \beta_2' \beta_2''
    \alpha_3' \alpha_3'' 
    \beta_1' \beta_1''
    \alpha_2' \alpha_2''
    \beta_3' \ \beta_3''
,
 \\
 \alpha_3'' \alpha_1'
 &= \varepsilon_{\alpha_2} \varepsilon_{\alpha_1}
 , \qquad
 \alpha_1' \varepsilon_{\alpha_3} \varepsilon_{\alpha_2} = 0
 , \qquad
 \varepsilon_{\alpha_3} \alpha_3'' \beta_1' = 0
 , \qquad
 \alpha_3'' \alpha_1' \alpha_1'' = 0
.
\end{align*}
The biserial weighted triangulation algebra
$B = B(Q,f,m_{\bullet})$ 
is then isomorphic to the idempotent algebra $e^* \Lambda^* e^*$,
where $e^*$ is the sum of the primitive idempotents
$e^*_1, e^*_2, e^*_3$ in $\Lambda^*$
corresponding to the vertices $1,2,3$.
\end{example}

We present now an example of an idempotent algebra of a periodic
weighted surface algebra which is neither 
a Brauer graph algebra
nor a weighted surface algebra.

\begin{example}
Let $S$ be a triangle with one puncture,
$T$ the triangulation of $S$
\[
\begin{tikzpicture}
[scale=1]
\node (A) at (-2,0) {$\bullet$};
\node (B) at (2,0) {$\bullet$};
\node (C) at (0,1) {$\bullet$};
\node (D) at (0,3) {$\bullet$};
\coordinate (A) at (-2,0) ;
\coordinate (B) at (2,0) ;
\coordinate (C) at (0,1) ;
\coordinate (D) at (0,3) ;
\draw
(A) edge node [above] {1} (D)
(D) edge node [above] {2} (B)
(A) edge node [below] {6} (C)
(C) edge node [below] {5} (B)
(A) edge [bend left=45,distance=2.5cm] node [above] {4} (B)
(A) edge node [below] {3} (B) ;
\end{tikzpicture}
\]
such that the edges $1,2,3$ are on the boundary,
and let  $\vv{T}$ be 
the orientation of triangles of $T$:
$(1\ 2\ 4)$, $(4\ 5\ 6)$, $(5\ 3\ 6)$.
Then the triangulation quiver
$(Q(S,\vv{T}),f)$ is of the form
\[
\begin{tikzpicture}
[->,scale=1]
\node (1) at (-2,1) {1};
\node (2) at (-2,-1) {2};
\node (3) at (1,0) {3};
\node (4) at (-1,0) {4};
\node (5) at (0,1) {5};
\node (6) at (0,-1) {6};
\node [circle,minimum size=.5cm](A) at (-2,1) {1};
\node [circle,minimum size=1.cm](B) at (-2.5,1.5) {};
\coordinate  (C1) at (intersection 2 of A and B);
\coordinate  (D1) at (intersection 1 of A and B);
 \tikzAngleOfLine(B)(D1){\AngleStart}
 \tikzAngleOfLine(B)(C1){\AngleEnd}
\fill[gray!20]%
   let \p1 = ($ (B) - (D1) $), \n2 = {veclen(\x1,\y1)}
   in
     (D1) arc (\AngleStart-360:\AngleEnd:\n2); 
\draw%
   let \p1 = ($ (B) - (D1) $), \n2 = {veclen(\x1,\y1)}
   in
     (B) ++(60:\n2) node[below]{\footnotesize\raisebox{-2.5ex}{$\xi\qquad\qquad\qquad\ $}}
     (D1) arc (\AngleStart-360:\AngleEnd:\n2); 
\node [circle,minimum size=.5cm](A) at (-2,-1) {2};
\node [circle,minimum size=1.cm](B) at (-2.5,-1.5) {};
\coordinate  (C) at (intersection 2 of A and B);
\coordinate  (D) at (intersection 1 of A and B);
 \tikzAngleOfLine(B)(D){\AngleStart}
 \tikzAngleOfLine(B)(C){\AngleEnd}
\fill[gray!20]%
   let \p1 = ($ (B) - (D) $), \n2 = {veclen(\x1,\y1)}
   in
     (D) arc (\AngleStart-360:\AngleEnd:\n2); 
\draw%
   let \p1 = ($ (B) - (D) $), \n2 = {veclen(\x1,\y1)}
   in
     (B) ++(60:\n2) node[left]{\footnotesize\raisebox{-7ex}{$\eta\qquad\ $}}
     (D) arc (\AngleStart-360:\AngleEnd:\n2); 
\node [circle,minimum size=.5cm](A) at (1,0) {3};
\node [circle,minimum size=1.cm](B) at (1.7,0) {};
\coordinate  (C) at (intersection 2 of A and B);
\coordinate  (D) at (intersection 1 of A and B);
 \tikzAngleOfLine(B)(D){\AngleStart}
 \tikzAngleOfLine(B)(C){\AngleEnd}
\fill[gray!20]%
   let \p1 = ($ (B) - (D) $), \n2 = {veclen(\x1,\y1)}
   in
     (D) arc (\AngleStart-360:\AngleEnd:\n2); 
\draw%
   let \p1 = ($ (B) - (D) $), \n2 = {veclen(\x1,\y1)}
   in
     (B) ++(60:\n2) node[above]{\footnotesize\raisebox{-7ex}{$\mu$}}
     (D) arc (\AngleStart-360:\AngleEnd:\n2); 
\fill[rounded corners=3mm,fill=gray!20] (-2,1) -- (-2,-1) -- (-1,0) -- cycle;
\fill[rounded corners=3mm,fill=gray!20] (-.1,.9) -- (-.1,-.9) -- (-1,0) -- cycle;
\fill[rounded corners=3mm,fill=gray!20] (.1,.9) -- (.1,-.9) -- (1,0) -- cycle;
\draw (-.1,.75) -- node[left]{\footnotesize$\varrho\!$} (-.1,-.75);
\draw (.1,-.75) -- node[right]{\footnotesize$\!\theta$} (.1,.75);
\draw
(1) edge node[left]{\footnotesize$\alpha$} (2)
(2) edge node[below]{\footnotesize$\beta$} (4)
(3) edge node[below]{\footnotesize$\omega$} (6)
(4) edge node[above]{\footnotesize$\gamma$} (1)
(4) edge node[above]{\footnotesize$\delta$} (5)
(5) edge node[above]{\footnotesize$\sigma$} (3)
(6) edge node[below]{\footnotesize$\nu$} (4);
\end{tikzpicture}
\]
with  $f$-orbits
$(\xi)$,
$(\eta)$,
$(\mu)$,
$(\alpha \ \beta \ \gamma)$,
$(\delta \ \varrho \ \nu)$,
$(\sigma \ \omega \ \theta)$.
Hence we have two $g$-orbits:
\[
  \cO(\alpha)  = (\alpha \ \eta \ \beta \ \delta
                                \ \sigma \ \mu \ \omega \ \nu
                                \ \gamma \ \xi) 
\qquad
\mbox{and}
\qquad
  \cO(\varrho)  = (\varrho \ \theta) . 
\]
Take the weight function $m_{\bullet} : \cO(g) \to \bN^*$
given by
$m_{\cO(\alpha)} = 1$
and
$m_{\cO(\varrho)} = 2$.
Moreover, let $c_{\bullet} : \cO(g) \to K^*$
be the trivial parameter function.
Then the
associated weighted surface algebra 
$\Lambda = \Lambda(S,\vv{T},m_{\bullet},c_{\bullet})$
is given by the quiver $Q(S,\vv{T},)$ and the relations:
\begin{align*}
  \xi^2 &= \alpha \eta \beta \delta \sigma \mu \omega \nu \gamma, 
 &
  \xi^2 \alpha &= 0, 
 &
  \alpha \beta &= \xi \alpha \eta \beta \delta \sigma \mu \omega \nu, 
 &
  \alpha \beta \delta &= 0, 
 &
  \nu \delta &= \theta \varrho \theta,
\\
  \eta^2 &= \beta \delta \sigma \mu \omega \nu \gamma \xi \alpha, 
 &
  \eta^2 \beta &= 0, 
 &
  \beta \gamma &= \eta \beta \delta \sigma \mu \omega \nu \gamma \xi , 
 &
  \beta \gamma \xi &= 0, 
 &
  \nu \delta\sigma  &= 0,
\\
  \mu^2 &= \omega \nu \gamma \xi \alpha \eta \beta \delta \sigma, 
 &
  \mu^2 \omega &= 0, 
 &
  \gamma \alpha &= \delta \sigma \mu \omega \nu \gamma \xi \alpha \eta, 
 &
  \gamma \alpha \eta &= 0, 
 &
  \sigma \omega &= \varrho \theta \varrho,
\\
  \delta \varrho &= \gamma \xi \alpha \eta \beta \delta \sigma \mu \omega, 
 &
  \delta \varrho \theta &= 0, 
 &
  \omega \theta &= \mu \omega \nu \gamma \xi \alpha \eta \beta \delta, 
 &
  \omega \theta \delta &= 0, 
 &
  \sigma \omega \nu &=0, 
\\
  \varrho \nu &= \sigma \mu \omega \nu \gamma \xi \alpha \eta \beta, 
 &
  \varrho \nu \gamma &= 0, 
 &
  \theta \sigma &= \nu \gamma \xi \alpha \eta \beta \delta \sigma \mu, 
 &
  \theta \sigma \mu &= 0. 
 &
\end{align*}
Let $e = e_1 + e_2 + e_3 + e_4$ be the sum of primitive idempotents 
of $\Lambda$ at the vertices $1,2,3,4$,
and $B = e \Lambda e$ the associated idempotent algebra.
Then $B$ is given by the quiver $\Delta$ of the form
\[
  \xymatrix@R=1pc{
    1
      \ar[dd]_{\alpha}
      \ar@(ul,dl)_{\xi}[] 
  \\ &
    4
      \ar[lu]_{\gamma}
      \ar@<.5ex>[r]^{\varphi}
  &
    3
      \ar@<.5ex>[l]^{\psi}
      \ar@(dr,ur)_{\mu}[]
  \\
    2
      \ar[ru]_{\beta}
      \ar@(ul,dl)_{\eta}[] 
  } 
\]
with the arrows 
$\varphi = \delta \sigma$
and
$\psi = \omega \nu$,
and the induced relations:
\begin{align*}
  \xi^2 &= \alpha \eta \beta \varphi \mu \psi \gamma, 
 &
  \xi^2 \alpha &= 0, 
 &
  \alpha \beta &= \xi \alpha \eta \beta \varphi \mu \psi, 
 &
  \alpha \beta \delta &= 0, 
 &
  \varphi \psi &= 0,
\\
  \eta^2 &= \beta \varphi \mu \psi \gamma \xi \alpha, 
 &
  \eta^2 \beta &= 0, 
 &
  \beta \gamma &= \eta \beta \varphi \mu \psi \gamma \xi , 
 &
  \beta \gamma \xi &= 0, 
 &
  \psi \varphi  &= 0,
\\
  \mu^2 &= \psi \gamma \xi \alpha \eta \beta \varphi, 
 &
  \mu^2 \psi &= 0, 
 &
  \gamma \alpha &= \varphi \mu \psi \gamma \xi \alpha \eta, 
 &
  \gamma \alpha \eta &= 0.
 &
\end{align*}
Then $B$ is not a special biserial algebra, 
and therefore it is not a Brauer graph algebra.
Further, $B$ is not a weighted surface algebra,
because we have  zero-relations 
$\varphi \psi = 0$
and
$\psi \varphi = 0$
of length $2$.
On the other hand, by general theory, the algebra
$B = e \Lambda e$ is  tame and symmetric.
\end{example}

\section{Diagram of algebras}
\label{sec:diagram}

The following diagram shows the relations between
the main classes of algebras occurring in the paper.
\[
 \xymatrix@R=3pc{
   &
   *+[F-:<5pt>]{ \txt{biserial weighted\\surface algebras}}
   \ar@{=}[r]
   \ar[d]_(.47){e(-)e}^(.47){\txt{all indecomposable\\idempotents $e$}}
   &
   *+[F-:<5pt>]{ \txt{biserial weighted\\triangulation algebras}}
   \\
   *+[F-:<5pt>]{ \txt{Brauer graph\\ algebras}}
   \ar@{=}[r]
   &
   *+[F-:<5pt>]{ \txt{basic, indecomposable\\symmetric special\\biserial algebras}}
   \ar@{=}[r]
   &
   *+[F-:<5pt>]{ \txt{weighted biserial\\quiver algebras}}
    \ar[u]^{\mbox{\LARGE $\bigcap$}}_{*}      
    \ar[d]^{\Lambda^{***}}        
   \\
   &
   *+[F-:<5pt>]{ \txt{periodic weighted\\surface algebras}}
   \ar@{=}[r]
   \ar[u]^(.45){e(-)e}_(.45){\txt{some indecomposable\\idempotents $e$}}
   &
   *+[F-:<5pt>]{ \txt{periodic weighted\\triangulation algebras}}
   ]
 }
\]
where, for a weighted biserial quiver algebra
$B = B(Q,f,m_{\bullet})$,
$B^* = B(Q^*,f^*,m^*_{\bullet})$,
and
$\Lambda^{***} = \Lambda(Q^{***},f^{***},m^{***}_{\bullet}, \mathds{1})$,
with $\mathds{1}$ denoting the trivial weight function
of $(Q^{***},f^{***})$.

\section*{Acknowledgements}

The results of the paper were partially presented during the
Workshop on Brauer Graph Algebras held in Stuttgart in March 2016.
The paper was completed during the visit of the first named author at the 
Faculty of Mathematics and Computer Science
of Nicolaus Copernicus University
in Toru\'n (June 2017).

\end{document}